\documentclass[leqno]{article}

\usepackage{amsmath,amssymb,amsthm,bm,mathrsfs}
\newcommand{\R}{\mathbb{R}}

\newcommand{\PP}{\mathbb{P}}

\newcommand{\Div}{\mathrm{div}\,}

\newcommand{\pt}{\partial}
\newcommand{\ep}{\varepsilon}

\newtheorem{definition}{Definition}[section]
\newtheorem{proposition}{Proposition}[section]
\newtheorem{theorem}{Theorem}[section]
\newtheorem{remark}{Remark}[section]
\newtheorem{lemma}{Lemma}[section]

\newtheorem*{assumption}{Assumption}
\makeatletter

\@addtoreset{equation}{section}
\makeatother

\setlength{\hoffset}{-0.04truecm}
\setlength{\oddsidemargin}{0truecm}
\setlength{\textwidth}{16truecm}
\setlength{\marginparsep}{0truecm}
\setlength{\marginparwidth}{0truecm}
\setlength{\voffset}{-0.04truecm}
\setlength{\headsep}{5truemm}
\setlength{\topmargin}{-5truemm}
\setlength{\textheight}{23cm}
\usepackage{layout}
\title{Remark on the strong solvability of the Navier-Stokes equations in the weak $L^n$ space}
\author{Takahiro Okabe \\
{\normalsize Depertment of Mathematics Education,}\\
{\normalsize Hirosaki University,}\protect\\
{\normalsize Hirosaki 036-8560, Japan}\protect\\
{\normalsize E-mail:\texttt{okabe@hirosaki-u.ac.jp}}
\and
Yohei Tsutsui \\
{\normalsize Department of Mathematical Sciences,} \\
{\normalsize Shinshu University, }\\
{\normalsize Matsumoto, 390-8621, Japan}\\
{\normalsize E-mail:\texttt{tsutsui@shinshu-u.ac.jp}}
}

\date{}
\begin{document}
\maketitle
\begin{abstract}
The initial value problem of the incompressible 
Navier-Stokes equations with non-zero forces in $L^{n,\infty}(\R^n)$ is investigated. 
Even though the Stokes semigroup 
is not strongly continuous on $L^{n,\infty}(\R^n)$,
with the qualitative condition 
for the external forces,
it is clarified that the mild solution of the
Naiver-Stokes equations satisfies the
differential equations 
in the topology of $L^{n,\infty}(\R^n)$. 
Inspired by the conditions for the forces,
we characterize the maximal complete subspace 
in $L^{n,\infty}(\R^n)$
where the Stokes semigroup is strongly continuous
at $t=0$. 
By virtue of this subspace, we also show local
well-posedness of the strong solvability of the 
Cauchy problem
without any smallness condition on the initial data in the subspace. 
Furthermore, via existence of local solutions,
we extend the uniqueness criterion 
in the solution class $BC\bigl([0,T)\,;\,L^{n,\infty}(\R^n)\bigr)$ for wider class of initial data, compared with the above subspace. \end{abstract}
\textbf{Key word:} Navier-Stokes equations, Strong solution, Lorentz spaces
\\
\textbf{MSC(2010):} 35Q30; 76D05
%
\section{Introduction}
%
Let $n\geq 3$. We consider 
the incompressible Naiver-Stokes equations in the whole space $\R^n$:
\begin{equation*}\tag{N-S}
\left\{
\begin{split}
&\pt_t u -\Delta u + u\cdot \nabla u + \nabla \pi=f
\qquad \text{in } \R^n\times (0,T),\\
& \Div u=0
\qquad \text{in } \R^n\times (0,T),\\
&u(\cdot,0)=a 
\qquad \text{in } \R^n.
\end{split}\right.
\end{equation*}
Here $u=u(x,t)=\bigl(u_1(x,t),\dots,u_n(x,t)\bigr)$
and $\pi=\pi(x,t)$ are the unknown velocity and the
pressure of the incompressible fluid at $(x,t)\in \R^n\times (0,T)$, respectively.
While, $a=a(x)=\bigl(a_1(x),\dots,a_n(x)\bigr)$
and
$f=f(x,t)=\bigl(f_1(x,t),\dots,f_n(x,t)\bigr)$
are the given initial data and external force, respectively.

In this paper, 
we study the strong solvability of 
the Naiver-Stokes equations 
in the framework of the weak Lebesgue space 
$L^{n,\infty}(\R^n)$
with non-zero external forces.
In particular, 
introducing the maximal subspace
in $L^{n,\infty}(\R^n)$ where the Stokes operator
is strongly continuous,
we  consider the local in time well-posedness
of the initial value problem 
of (N-S) in the subspace.
Due to the existence o the local solution,
we discuss the uniqueness in 
$BC\bigl([0,T)\,;\,L^{n,\infty}(\R^n)\bigr)$ 
of weak mild solutions of (N-S).

The strong solvability of the (N-S) in 
the Lebesgue and the Sobolev spaces,
in terms of the semigroup theory, 
was developed by Fujita and Kato 
\cite{Fujita Kato ARMA1964}, 
Kato \cite{Kato MZ1984} and Giga and Miyakawa
\cite{Giga Miyakawa ARMA1985}, and so on.
However, it is well-known
that the weak Lebesgue space $L^{n,\infty}(\R^n)$ has lack of the density of compact-supported functions $C_{0}^\infty(\R^n)$ and that
the Stokes operator $\{e^{t\Delta}\}_{t\geq 0}$
is not strongly continuous at $t=0$ in $L^{n,\infty}(\R^n)$.
Therefore, there are difficulty for the validity of the differential equation:
\begin{equation*}
\frac{d}{dt}u -\PP \Delta u +
\PP [u\cdot\nabla u]=\PP f, \qquad t>0
\end{equation*}
in the critical topology of $L^{n,\infty}(\R^n)$,
especially, with non-trivial external forces
and for the verification of the
the local in time existence 
and also the uniqueness 
of mild solutions of (N-S) 
for initial data in $L^{n,\infty}(\R^n)$,
where $\PP$ denotes the Leray-Hopf, 
the Weyl-Helmholtz or the Fujita-Kato 
bounded projection. 
For the Cauchy problem, in case $f\equiv 0$, Miyakawa and Yamada \cite{Miyakawa Yamada HMJ1992} constructed the mild solution $u\in C\bigl((0,\infty)\,;\,L^{2,\infty}(\R^2)\bigl)$
with $u(t)\rightharpoonup a$ weakly $*$ in $L^{2,\infty}(\R^2)$. 
Barraza \cite{Barraza RMI1996}
proved the existence of a global mild solution
$u \in BC\bigl((0,\infty)\,;\,L^{n,\infty}(\R^n)\bigr)$
with small initial data.
As for local in time solution, 
Kozono and Yamazaki \cite{Kozono Yamazaki HJM1995}
constructed a regular solution $u(t)$ 
in the framework of 
$L^{n,\infty}(\Omega)+L^r(\Omega)$ where $r>n$ and
$\Omega$ is a exterior domain.
By the lack of the density of $C_0^\infty(\R^n)$,
In case $f\equiv f(x)$, Borchers and Miyakawa
\cite{Borchers Miyakawa AM1995}
refered to the existence of a strong solution of (N-S) with $u(t)\rightharpoonup a$ in weakly $*$ in $L^{n,\infty}(\Omega)$, as a solution of the
purterbed equations (P) below from the stationary solution $v$ associated with the force $f$:
\begin{equation*}\tag{P}
\frac{d}{dt}w -\Delta w +\PP[v\cdot\nabla w+w\cdot\nabla v]+\PP[w\cdot\nabla w]=0,
\qquad t>0,
\end{equation*}
which has apparently no forces. 
In \cite{Borchers Miyakawa AM1995},
they consider the stability in $L^{n,\infty}(\Omega)$
introducing the subspace $L_0^{n,\infty}(\Omega)$
of the completion 
of $C_0^\infty(\Omega)$ 
in $L^{n,\infty}(\Omega)$
where the Stokes semigroup is strongly continuous.
Recently, with the subspace $L_0^{n,\infty}(\Omega)$, Koba \cite{Koba JDE 2017} and 
Maremonti \cite{Maremonti RM2017} 
considered
 the existence of the strong solution
of (N-S) and (P), 
the stability and the uniqueness of mild solution of (N-S) without  \eqref{assump;unique}.

In case of non-trivial force $f=f(x,t)$, 
we need the essential treatment of the Duhamel terms
which comes from $f$.
Yamazaki \cite{Yamazaki MA2000}
consider the global existence and the stability of the weak mild solution of (N-S) in 
$L^{n,\infty}(\Omega)$ for small $a$ 
and $f=\nabla \cdot F$ with 
small $F(t)\in L^{\frac{n}{2},\infty}(\Omega)$. 
See also Definition \ref{def;weakmild} below.
On the other hand, our previous work \cite{Okabe Tsutsui periodic} construct a time periodic strong
solution in $BC\bigl(\R\,;\,
L^{n,\infty}(\R^n)\bigr)$ by
a different approach from \cite{Koba JDE 2017,Maremonti RM2017}, 
assuming a qualitative condition 
only on $f$ which satisfies
H\"{o}lder continuous on $\R$ 
with value in $L^{n,\infty}(\R^n)$ such as
\begin{equation*}\tag{A}
\lim_{\ep\searrow 0} \| e^{\ep\Delta}\PP f(t)-
\PP f(t)\|_{n,\infty}=0,
\qquad\text{for a.e. }t \in \R.
\end{equation*}

The aim of this paper is to
establish the global and the 
local well-posedness of the strong solvability
for the Cauchy problem of (N-S) with
non-trivial external forces.  
Firstly, we construct a global weak mild and mild 
solution
$u \in BC\bigl((0,\infty) \,;\,
L^{n,\infty}(\R^n)\bigr)$ of (N-S)
for small $a \in L^{n,\infty}(\R^n)$ 
and small $f \in BC\bigl([0,\infty)\,;\,
 L^{\frac{n}{3},\infty}(\R^n)\bigr)$, $n\geq 4$
and
$f \in BC\bigl([0,\infty)\,;\,L^{1}(\R^3)\bigr)$
which are scale invariant classes for initial data
and external forces, respectively.
We note that in this case the solution class
contains stationary solutions 
and time periodic solutions of (N-S).
Here, the key is the Meyer's estimate based on the 
$K$-method on $L^{n,\infty}(\R^n)$
which enables us to deal with  external forces
with the critical regularity. 
Then we observe this mild solution of (N-S)
becomes a strong solutions with the aid of (A).

Secondly, inspired by the condition (A) above,
we are successful to characterize the subspace
$X_\sigma^{n,\infty}$
in $L^{n,\infty}(\R^n)$ which is equivalent to the
condition (A).
Here we note that $X_{\sigma}^{n,\infty}$ 
is the maximal subspace 
where the Stokes semigroup 
is strongly continuous at $t=0$ and that
$X_{\sigma}^{n,\infty}$ is a strictly wider 
class than that in 
\cite{Koba JDE 2017,Maremonti RM2017}, 
see Remark \ref{rem;equivalent} below.
Lunardi \cite{Lunardi 1995} charachterized 
the subspace by the general theory 
of sectorial operators.
On the other hand, we give another proof 
for a different kind of operators.

Finally,
by the virtue of $X_{\sigma}^{n,\infty}$,
we establish the local well-posedness 
of the Cauchy problem of (N-S) 
in $X_{\sigma}^{n,\infty}$.
We construct a local  weak mild solution 
$u \in BC\bigl([0,T)\,;\,
X_{\sigma}^{n,\infty}\bigr)$
of (N-S) 
for every $a \in X_{\sigma}^{n,\infty}$
and $f \in BC\bigl([0,T)\,;\,
L^{\frac{n}{3},\infty}(\R^n)\bigr)$,
$n\geq 4$ and 
$f \in BC\bigl([0,T)\,;\,L^1(\R^3)\bigr)$
with less spatial singularity.
In this case, since $f$ has just critical regularity, there is a difficulty that
weak $L^n$-norm is only one which
is applicable to the iteration scheme.
Hence, as a different way from the usual 
Fujita-Kato (auxiliary norm) approach,
we introduce another iteration scheme
where $a\in X_{\sigma}^{n,\infty}$ is
much effective.
The existence of a local solution of (N-S)
yields the uniqueness of weak mild solution
in $BC\bigl([0,T)\,;\,L^{n,\infty}(\R^n)\bigr)$
as long as $a$ and $f$ have less singularity
within the scale critical spaces, respectively.

Moreover, in case  $u \in BC\bigl([0,T)\,;\,L^{n,\infty}(\R^n)\bigr)$, we see that
the initial data necessarily belong to 
$X_{\sigma}^{n,\infty}$.
Consequently, if $a$ has a bad singularity 
like $L^{n,\infty}(\R^n)\setminus X_{\sigma}^{n,\infty}$, then the weak mild solution of (N-S)
never happens to satisfy $\lim\limits_{t\to 0}u(t)=a$ in $L^{n,\infty}(\R^n)$ nor $u \in C\bigl((0,T)\,;\,L^r(\R^n)\bigr)$ with any $r>n$.

\medskip
This paper is organized as follow. In Section 2, 
we state our results. Section 3 is devoted to
the notations of the Lorenz spaces, to the critical estimate according to Meyer \cite{Meyer}
and to properties of an abstract evolution equations whose evolution semigroup is not strongly continuous at $t=0$, introduced by the previous work 
\cite{Okabe Tsutsui periodic}. 
In Section 4, we construct
 global weak mild and mild solutions of (N-S) by Fujita-Kato approach. In Section 5, we discuss the maximal subspace associated with the condition (A). 
In Section 6, we construct 
local weak mild and mild solutions of (N-S). 
In Section 7, we consider our global mild solution of (N-S) becomes strong solutions as a completion of the proof of Theorem \ref{thm;global}.
In Section 8, we consider local weak mild soltions
of (N-S) becomes strong solutions as a completion of the proof of Theorem \ref{thm;localX}
In Section 9 we consider the 
uniqueness criterion and give a proof of Theorem
\ref{thm;brezis}.%
\section{Results}
%
Before stating results, we introduce the following 
notations and some function spaces.
Let $C_{0,\sigma}^\infty(\mathbb{R}^n)$ denotes 
the set of all $C^\infty$-solenoidal vectors $\phi$ with compact support
in $\mathbb{R}^n$, i.e., $\Div \phi=0$ in $\mathbb{R}^n$.
$L^r_{\sigma}(\mathbb{R}^n)$ is the closure of 
$C^\infty_{0,\sigma}(\mathbb{R}^n)$ with respect to the $L^r$-norm 
$\|\cdot \|_r$, $1< r < \infty$.
$(\cdot,\cdot)$ is the duality pairing between $L^r(\mathbb{R}^n)$
and $L^{r^\prime}(\mathbb{R}^n)$, where $1/r+1/r^\prime=1$, 
$1\leq r < \infty$.
$L^r(\mathbb{R}^n)$ and $W^{m,r}(\mathbb{R}^n)$ denote 
the usual (vector-valued) $L^r$-Lebesgue space and $L^r$-Sobolev space over 
$\mathbb{R}^n$, respectively. 
Moreover, $\mathcal{S}(\R^n)$ denotes the set of all of the Schwartz functions. $\mathcal{S}^\prime(\R^n)$ denotes the set of all tempered distributions.
When $X$ is a Banach space, $\| \cdot \|_{X}$ denotes the norm on $X$.
Moreover, $C(I;X)$, $BC(I;X)$ and $L^r(I;X)$ denote 
the $X$-valued continuous and bounded continuous functions over the interval $I\subset \R$, 
and $X$-valued $L^r$ functions, respectively.

Moreover, for $1 < p < \infty$ and $1\leq q \leq \infty$ let 
$L^{p,q}(\R^n)$ be the space of all locally integrable functions with (quasi) norm $\|f\|_{p,q} < \infty$, where
\begin{equation*}
\|f\|_{p,q}= \begin{cases}\displaystyle
\left(\int_0^\infty 
\bigl(\lambda \,|\{x\in \R^n \,;\, |f(x)|>\lambda \}|^{\frac{1}{p}}
\bigr)^{q}\frac{d\lambda}{\lambda}\right)^{\frac{1}{q}},
& 1\leq q <\infty,\\
\displaystyle
\sup_{\lambda>0} \lambda \,|\{x\in \R^n\,;\, |f(x)|>\lambda\}|^{\frac{1}{p}},
& q=\infty,
\end{cases}
\end{equation*}
where $|E|$ denotes the Lebesgue measure of $E \subset \R^n$.
For the case $q=\infty$, 
$L^{p,\infty}(\R^n)$ is a Banach space with the following norm:
with any $1 \le r < p$
\begin{equation*}
\| f \|_{L^{p,\infty}}=\sup_{0<|E|<\infty} |E|^{-\frac{1}{r}+\frac{1}{p}}
\Bigl(\int_E |f(x)|^r \,dx\Bigr)^{\frac{1}{r}}.
\end{equation*}
Here, we note that $\|\cdot\|_{L^{n,\infty}}$ 
is equivalent to $\|\cdot\|_{n,\infty}$. 
Since $\PP$ is a bounded operator on $L^{p,\infty}(\R^n)$ for $1<p<\infty$, 
we introduce
the set of solenoidal vectors in $L^{p,\infty}(\R^n)$ as 
$L^{p,\infty}_{\sigma}(\R^n)=\PP L^{n,\infty}(\R^n)$.

\medskip

\begin{definition}[Weak mild solution]
\label{def;weakmild}
Let $a \in L_\sigma^{n,\infty}(\R^n)$ 
and $f\in BC\bigl([0,\infty)\,;\,
L^{p,\infty}(\R^n)\bigr)$
for some $n/3\leq p \leq n$.
We call a function
$u \in BC\bigl((0,\infty)\,;\,
L_{\sigma}^{n,\infty}(\R^n)\bigr)$
weak (generalized) mild solution of (N-S),
if
\begin{equation*}\tag{IE$^{*}$}
u(t)=e^{t\Delta }a
+ \int_0^t e^{(t-s)\Delta}\PP f(s)\,ds
-\int_0^t 
\nabla \cdot e^{(t-s)\Delta}
\PP (u\otimes u)(s)\,ds, 
\qquad 0<t<T.
\end{equation*}
\end{definition}
\begin{remark}\label{rem;weakmildsol}\rm
In case of $n=3$, we modify the condition as 
$f \in BC([0,\infty)\,;\,L^{1}(\R^3))$ and
\begin{equation*}\tag{IE$^{**}$}
u(t)=e^{t\Delta }a
+ \int_0^t \PP e^{(t-s)\Delta} f(s)\,ds
-\int_0^t 
\nabla \cdot e^{(t-s)\Delta}
\PP (u\otimes u)(s)\,ds, 
\qquad 0<t<T.
\end{equation*}
Moreover, a weak mild solution $u$ satisfies
\begin{equation*}
\bigl(u(t),\varphi\bigr)=
(e^{t\Delta}a,\varphi )
+
\int_0^t \bigl(e^{(t-s)\Delta} f(s),
\varphi\bigr)\,ds
+
\int_0^t\bigl(u(s)\cdot \nabla e^{(t-s)\Delta}\varphi,u(s)\bigr)\,ds, \quad 0<t<T , \quad
\varphi \in C_{0,\sigma}^{\infty}(\R^n).
\end{equation*}
See also, Kozono and Yamazaki \cite{Kozono Yamazaki HJM1995}, Yamazaki \cite{Yamazaki MA2000}.
\end{remark}
\begin{definition}[Mild solution]
Let $a \in L^{n,\infty}_{\sigma}(\R^n)$ 
and $f\in BC\bigl([0,T)\,;\,L^{p,\infty}(\R^n)\bigr)$ 
for some $n/3\leq p\leq n$.
Then a function 
$u \in BC\bigl((0,T)\,
;\,L_{\sigma}^{n,\infty}(\R^n)\bigr)$ 
which satisfies
$\nabla u \in C\bigl((0,T)\,;\,L^{q,\infty}(\R^n)\bigr)$
with $\limsup\limits_{t\to 0}t^{1-\frac{n}{2q}}\|\nabla u(t)\|_q<\infty$
for some $q\geq n/2$ is called a mild solution of (N-S),
if
\begin{equation*}\tag{IE}
u(t)=e^{t\Delta}a
+
\int_0^t e^{(t-s)\Delta}\PP f(s)\,ds
-
\int_0^t e^{(t-s)\Delta}\PP [u\cdot\nabla u](s)\,ds,
\qquad 0<t<T.
\end{equation*}
\end{definition}
\begin{remark}\rm
In case of $n=3$, we introduce a similar modification for $f$ as in Remark \ref{rem;weakmildsol}. 
We note that $u(t)$ tends to $a$ as $t\searrow0$
in the sense of distributions, i.e.,
$\bigl(u(t),\varphi\bigr)\to(a,\varphi)$ as
$t\searrow 0$ for all $\varphi \in C_0^{\infty}(\R^n)$.
Moreover, additionally if
$\int_0^t e^{(t-s)\Delta}\PP f(s)\,ds
\rightharpoonup 0$ weakly $*$ in $L^{n,\infty}(\R^n)$ as $t\to0$ and if  
$u \in BC\bigl((0,T)\,;\,L_{\sigma}^r(\R^n)\bigr)$ with some $r>n$ or 
$\nabla u \in C\bigl((0,T)\,;\,L^q(\R^n)\bigr)$ with 
$\limsup\limits_{t\to 0}t^{1-\frac{n}{2q}}\|\nabla u(t)\|_q =0$
for some $q >n/2$, then it holds that $u(t) \rightharpoonup a$ weakly $*$ in $L^{n,\infty}(\R^n)$ as $t \searrow 0$.
However, we are unable to obtain $u(t) \to a$ in $L^{n,\infty}$ as $t\searrow 0$ in general, since $\{e^{t\Delta}\}$ is not strongly 
continuous at $t=0$ in $L^{n,\infty}(\R^n)$.
\end{remark}
\begin{definition}[Strong solution]\label{defi:strong}
Let $a \in L_{\sigma}^{n,\infty}(\R^n)$ and 
$f\in BC\bigl([0,T)\,;\,L^{n,\infty}(\R^n)\bigr)$
Then a function $u$ is called
a strong solution of (N-S), if
\begin{itemize}
\item[(i)] 
$u \in BC\bigl((0,T)\,;\,L_{\sigma}^{n,\infty}(\R^n)\bigr)\cap C^1\bigl((0,T)\,;\,L_{\sigma}^{n,\infty}(\R^n)\bigr)$,
\item[(ii)] 
$u(t)\in \{u\in L_{\sigma}^{n,\infty}\,;\,\Delta u \in L^{n,\infty}(\R^n)\}$ for $0<t<T$ and 
$\Delta u \in C\bigl((0,T)\,;\, L^{n,\infty}(\R^n)\bigr)$,
\item[(iii)] $u$ satisfies (N-S) in the following sense.
\begin{equation*}\tag{DE}
\left\{\begin{split}
&\frac{du}{dt}-\Delta u + \PP [u\cdot\nabla u]=\PP f
\quad\text{in } L_{\sigma}^{n,\infty}(\R^n),\quad 0<t<T,
\\
&\lim_{t\to\infty}\bigl(u(t),\varphi\bigr)=(a,\varphi)
\quad \text{for all } \varphi\in C_{0}^{\infty}(\R^n).
\end{split}\right.
\end{equation*}
\end{itemize}
\end{definition}
\begin{remark}\rm
The strong solution $u$ in the class (i) and (ii) in Definition \ref{defi:strong} necessarily satisfies
the initial condition in the sense of distributions.
Indeed, the strong solution $u$ satisfies (IE).
Then, noting that 
for each $t$, $u(t) \in L^r(\R^n)$ for some $r>n$ 
by the Sobolev embedding and by the real interpolation, we see that for 
$\phi\in C_{0,\sigma}^{\infty}(\R^n)$ and for $p>n/(n-1)$
\begin{equation*}
\begin{split}
\left|\left(\int_0^t 
e^{(t-s)\Delta}\PP[u\cdot\nabla u](s)\,ds,
\phi\right)\right|
&=\left|-\int_0^t
\bigl(u(s)\cdot\nabla e^{(t-s)\Delta}\phi,
u(s)\bigr)ds\right|
\\
&\leq t^{-\frac{n}{2p}+\frac{n}{2}-\frac{1}{2}}
 \sup_{0<s<T}\|u(s)\|_{n,\infty}^2
\|\phi\|_p \\
& \to 0 \quad\text{as } t \searrow 0,
\end{split}
\end{equation*}
since $-\frac{n}{2p}+\frac{n}{2}-\frac{1}{2}>0$.
\end{remark}
Since $\{e^{t\Delta}\}$ is not strongly continuous, 
it is open problem that a mild solution of (N-S) 
satisfies (DE) in the topology of $L^{n,\infty}(\R^n)$. 
In our previous work \cite{Okabe Tsutsui periodic}, 
we introduce the condition (A) for the external forces 
which enables us to verify 
that a mild solution of (N-S) becomes a strong solution. 
With this condition, 
we investigate the global in time 
and also local in time strong solvability of (N-S).

\begin{theorem}\label{thm;global}
(i) Let $n\geq 4$. There exists $\ep_n>0$ such that
for $a \in L_{\sigma}^{n,\infty}(\R^n)$ and $f \in BC\bigl([0,\infty)\,;\,
L^{\frac{n}{3},\infty}(\R^n)\bigr)$.
 if 
\begin{equation*}
\| a \|_{n,\infty} +
\sup_{0<t<\infty}\|\PP f(t)\|_{\frac{n}{3},\infty} < \ep_n,
\end{equation*}
then there exists a global in time weak mild solution 
$u\in BC\bigl((0,\infty)\,;\, L^{n,\infty}(\R^n)\bigr)$ of (N-S). 

(ii) For $n\geq 4$ there exists $\ep_n^\prime>0$
with the following property. 
If $a \in L_{\sigma}^{n,\infty}(\R^n)$ also satisfies $\nabla a \in L^{\frac{n}{2},\infty}(\R^n)$ and
\begin{equation*}
\|\nabla a\|_{\frac{n}{2},\infty} 
+ 
\sup_{0<t<\infty}\|\PP f(t)\|_{\frac{n}{3},\infty}
<\ep_n^\prime
\end{equation*}
then there exists a mild solution $u$ of (N-S) in the class 
\begin{equation*}
u \in BC\bigl((0,\infty)\,;\,
L_{\sigma}^{n,\infty}(\R^n)\bigr)
\quad\text{and}\quad
\nabla u \in BC\bigl( (0,\infty)\,;\,L^{\frac{n}{2},\infty}(\R^n)\bigr).
\end{equation*}

(iii) For $n\geq 4$ and $\frac{n}{3}<p<\frac{n}{2}$,
there exists $0<\ep_{n,p}\leq\ep_{n}^\prime$
with the the following property.
Let $a \in L^{n,\infty}_{\sigma}(\R^n)$ satisfy
$\nabla a \in L^{\frac{n}{2},\infty}(\R^n)
\cap L^{q,\infty}(\R^n)$ with $\frac{1}{q}=\frac{1}{p}-\frac{1}{n}$
and let $f \in BC\bigl([0,\infty)\,;\,
L^{\frac{n}{3},\infty}(\R^n)\cap L^{p,\infty}(\R^n)\bigr)$. If 
\begin{equation*}
\| \nabla a \|_{\frac{n}{2},\infty}+
\sup_{0<t<\infty}\|\PP f(t)\|_{\frac{n}{3},\infty}
< 
\ep_{n,p},
\end{equation*}
then the mild solution $u$ obtained by (ii) above satisfies
\begin{equation*}
u \in BC\bigl((0,\infty)\,;\,
L_{\sigma}^{r,\infty}(\R^n)\bigr)
\quad\text{and}\quad
\nabla u \in BC\bigl( (0,\infty)\,;\,L^{q,\infty}(\R^n)\bigr),
\end{equation*} 
where $\frac{1}{r}=\frac{1}{q}-\frac{1}{n}=\frac{1}{p}-\frac{2}{n}$, 
and consequently satisfies $u(t)\rightharpoonup a$ weakly $*$ in $L_{\sigma}^{n,\infty}(\R^n)$
as $t\searrow 0$.

(iv) We additionally assume 
that $\PP f$ is H\"{o}lder continuous on $[0,\infty)$ 
in $L_{\sigma}^{n,\infty}(\R^n)$ and that
\begin{equation*}\tag{A}
\lim_{\ep\to0} \|e^{\ep\Delta}\PP f(t)-\PP f(t)\|_{n,\infty}=0, 
\end{equation*}
for almost every $t\in [0,\infty)$.
Then we have that 
a mild solution $u$ obtained by (iii) above is a strong solution of (N-S)
with $u(t)\rightharpoonup a$ 
weakly $*$ in $L_{\sigma}^{n,\infty}(\R^n)$ 
as $t\searrow 0$.

(v) Let $n=3$.
There exists $\ep_3>0$ such that for $a \in L_{\sigma}^{3,\infty}(\R^3)$ and $f\in BC\bigl([0,\infty)\,;\,L^1(\R^3)\bigr)$ if 
\begin{equation*}
\|a \|_{3,\infty}
+
\sup_{0<t<\infty} \|f(t)\|_1 < \ep_3,
\end{equation*}
then there exists a weak mild solution $u \in BC\bigl((0,\infty)\,;\, L_{\sigma}^{3,\infty}(\R^3)\bigr)$ with (IE$^{**}$).

(vi) For $n=3$ and $1<p<\frac{3}{2}$, there exists 
$0<\ep_{3,p}\leq \ep_3$ with the following property. 
Let $a \in L_\sigma^{3,\infty}(\R^3)$ satisfy
$\nabla a \in L^{\frac{3}{2},\infty}(\R^3)
\cap L^{q,\infty}(\R^3)$ with $\frac{1}{q}=\frac{1}{p}-\frac{1}{3}$
and let $f\in BC\bigl([0,\infty)\,;\,
L^{1}(\R^3)\cap L^{p,\infty}(\R^3)\bigr)$.
If 
\begin{equation*}
\| \nabla a \|_{\frac{3}{2},\infty}
+
\sup_{0<t<\infty}\|f(t)\|_{1}
<\ep_{3,p},
\end{equation*}
then the weak mild solution $u$ obtained by (v) is
the mild solution of (N-S) in the class
\begin{equation*}
u \in BC\bigl((0,\infty)\,;\,
L_{\sigma}^{r,\infty}(\R^n)\bigr)
\quad\text{and}\quad
\nabla u \in BC\bigl( (0,\infty)\,;\,L^{q,\infty}(\R^n)\bigr),
\end{equation*} 
where $\frac{1}{r}=\frac{1}{q}-\frac{1}{n}=\frac{1}{p}-\frac{2}{n}$.
Further, $u(t)\rightharpoonup a$ weakly $*$
in $L^{3,\infty}_{\sigma}(\R^3)$ as $t\searrow 0$.

(vii) Similarly, we additionally assume 
that $\PP f$ is H\"{o}lder continuous on $[0,\infty)$
in $L_{\sigma}^{3,\infty}(\R^3)$ and that
$\PP f$ satisfies (A) for almost every $t\geq 0$. 
Then 
the mild solution $u$ obtained by (vi) above becomes the strong solution of (N-S)
with $u(t)\rightharpoonup a$ 
weakly $*$ in $L_{\sigma}^{3,\infty}(\R^3)$ 
as $t\searrow 0$.
\end{theorem}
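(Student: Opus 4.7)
The plan is to prove the seven parts sequentially, treating (v)--(vii) as dimension-adapted analogues of (i)--(iv) in which the location of $\PP$ in the Duhamel formula is shifted (cf.\ (IE$^{**}$)) because $\PP$ is not bounded on $L^1(\R^3)$. For (i) I would set up a Fujita--Kato contraction for the map
\begin{equation*}
\Phi[u](t) := e^{t\Delta} a + \int_0^t e^{(t-s)\Delta}\PP f(s)\,ds - \int_0^t \nabla\cdot e^{(t-s)\Delta}\PP(u\otimes u)(s)\,ds
\end{equation*}
on $BC((0,\infty);L^{n,\infty}_{\sigma}(\R^n))$ with the norm $\|u\|_{K}:=\sup_{t>0}\|u(t)\|_{n,\infty}$. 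Three ingredients are needed: uniform boundedness of $\{e^{t\Delta}\}$ on $L^{n,\infty}$; the critical force estimate $\sup_{t>0}\|\int_0^t e^{(t-s)\Delta}\PP f(s)\,ds\|_{n,\infty} \le C\sup_{s}\|\PP f(s)\|_{\frac{n}{3},\infty}$; and the Meyer-type bilinear estimate $\sup_{t>0}\|\int_0^t \nabla\cdot e^{(t-s)\Delta}\PP(u\otimes v)(s)\,ds\|_{n,\infty} \le C\|u\|_K \|v\|_K$. Both integral bounds rest on the $K$-method presentation stated in Section 3; with these in hand, smallness of the data produces a unique fixed point in a small ball, which proves (i). Part (v) is identical once $\PP$ is moved inside the semigroup and $L^1(\R^3)$ replaces $L^{\frac{n}{3},\infty}$.

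For (ii) and (vi), I would run a joint contraction for $u$ and $\nabla u$. Differentiating (IE) formally,
\begin{equation*}
\nabla u(t) = e^{t\Delta}\nabla a + \int_0^t \nabla e^{(t-s)\Delta}\PP f(s)\,ds - \int_0^t \nabla e^{(t-s)\Delta}\PP(u\cdot\nabla u)(s)\,ds,
\end{equation*}
and I would work in the Banach space of vector fields with $\|u\|_K + \sup_{t>0}\|\nabla u(t)\|_{\frac{n}{2},\infty} < \infty$, applying parallel Meyer-type bounds adapted to the scaling-critical $L^{\frac{n}{2},\infty}$ space for $\nabla u$. Part (iii) is then a bootstrap: using $\nabla a \in L^{q,\infty}$ and $\PP f \in L^{p,\infty}$ with $\frac{1}{q}=\frac{1}{p}-\frac{1}{n}$, a second contraction (or direct iteration on the solution from (ii)) yields $\nabla u \in BC((0,\infty);L^{q,\infty})$, and the corresponding Sobolev embedding gives $u \in BC((0,\infty);L^{r,\infty})$ with $r > n$. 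Because $e^{t\Delta}a \rightharpoonup a$ weakly $*$ in $L^{n,\infty}$, while both Duhamel terms carry a positive power of $t$ by the gained subcritical integrability, we conclude $u(t)\rightharpoonup a$ weakly $*$ in $L^{n,\infty}_\sigma$ as $t\searrow 0$. Part (vi) proceeds identically for $n=3$ under the (IE$^{**}$) convention.

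For (iv) and (vii), the strong-solution property is verified term by term in the $L^{n,\infty}$-topology. H\"{o}lder continuity of $\PP f$ on $[0,\infty)$ into $L^{n,\infty}_{\sigma}$ together with condition (A) is precisely what makes the convolution $\int_0^t e^{(t-s)\Delta}\PP f(s)\,ds$ a classical solution of the inhomogeneous Stokes equation in $L^{n,\infty}$ at every $t>0$: this is the direct analogue of the standard fact for strongly continuous analytic semigroups, and the argument from \cite{Okabe Tsutsui periodic} adapts verbatim, (A) substituting for strong continuity at the upper endpoint $s=t$. For the bilinear Duhamel, the subcritical regularity from (iii) or (vi) (namely $u\in L^{r,\infty}$ with $r>n$ and $\nabla u \in L^{q,\infty}$) implies that $\PP[u\cdot\nabla u]$ is H\"{o}lder continuous in $t$ with values in $L^{n,\infty}$ by standard parabolic smoothing, so the same analytic-semigroup argument applies. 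Combining, we obtain $u\in C^1((0,\infty);L^{n,\infty}_\sigma)$ with $\Delta u \in C((0,\infty);L^{n,\infty})$ satisfying (DE). The hard part is the first item: verifying that the purely qualitative condition (A) on the force is truly enough to substitute for strong continuity in the classical semigroup argument for the non-homogeneous linear term, since without (A) the Duhamel integral only converges in a weak $*$ sense; once this is established, the bilinear contribution is comparatively soft thanks to the auxiliary regularity already secured.
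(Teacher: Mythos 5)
Your treatment of (i)--(iii) and (v)--(vi) coincides with the paper's: a successive approximation in $BC\bigl((0,\infty)\,;\,L^{n,\infty}_{\sigma}\bigr)$ whose only ingredients are the uniform boundedness of $e^{t\Delta}$, the critical force bound of Lemma \ref{modi}, and Meyer's bilinear estimate of Lemma \ref{Meyer}; then a joint iteration for $u$ and $\nabla u$ in the pair $L^{n,\infty}\times L^{\frac{n}{2},\infty}$; then a linear bootstrap giving $\nabla u\in L^{q,\infty}$ and $u\in L^{r,\infty}$. No objection there.

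The gap is in (iv) and (vii). You propose to verify (DE) directly on the global solution of (iii) by showing that $\PP[u\cdot\nabla u]$ is H\"older continuous with values in $L^{n,\infty}$, claiming this follows ``by standard parabolic smoothing'' from $u\in BC\bigl((0,\infty);L^{r,\infty}\bigr)$, $\nabla u\in BC\bigl((0,\infty);L^{q,\infty}\bigr)$. But with $\frac1q=\frac1p-\frac1n$, $\frac1r=\frac1p-\frac2n$ and $\frac n3<p<\frac n2$, every pairing of the norms controlled in (iii) puts $u\cdot\nabla u$ only in $L^{\sigma,\infty}$ with $\frac1\sigma=\frac2p-\frac3n>\frac1n$ (or $\frac1\sigma=\frac1p>\frac2n$), i.e.\ $\sigma<n$ strictly; so $\PP[u\cdot\nabla u](t)$ does not even lie in $L^{n,\infty}$ on the strength of (iii) alone, let alone H\"olderly so, and the equation cannot be read in $L^{n,\infty}_{\sigma}$. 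A further bootstrap through Lemma \ref{modi} cannot repair this, since the gain in integrability of the Duhamel term is capped by the integrability of $f$. The paper's actual mechanism is different: under the extra hypothesis of (iv) one has $\PP f(t)\in L^{n,\infty}_{\sigma}$, which permits the Kato-type \emph{weighted} local construction of Theorem \ref{thm;local}, producing a local mild solution $v$ with $t^{\frac12-\frac{n}{2\rho}}v\in BC([0,T_*);L^{\rho})$ for \emph{all} $n<\rho\le\infty$ and $t^{1-\frac{n}{2\varrho}}\nabla v\in BC([0,T_*);L^{\varrho})$; only then does \eqref{eq;nonlinear Ln} give $\PP[v\cdot\nabla v](t)\in L^{n}\subset L^{n,\infty}$ and the H\"older estimates of Theorem \ref{thm;regularity} go through. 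This regularity is transferred to the global solution $u$ by the uniqueness theorem (Theorem \ref{thm;local uniqueness}), and is propagated to all of $(0,\infty)$ by restarting at times $t_0>T_*/2$ with an existence time $\tau_0$ bounded below uniformly in $t_0$. Your proposal omits both the uniqueness/identification step and the gluing step, and these are not cosmetic: without them the term-by-term verification cannot begin. (By contrast, your assessment that the linear inhomogeneous term is ``the hard part'' is off; that is exactly Lemma \ref{thm;abstract evo} from the earlier work and needs no new idea.)
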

\begin{remark}\rm
(i) We note that the condition (A) is required only for the force $\PP f$. Since we need no restriction on the initial data in $L_{\sigma}^{n,\infty}(\R^n)$, 
we are able to handle the function like $1/|x|$ as an initial data.

(ii) In the assumption in Theorem \ref{thm;global},
time decay of the forces is not required. 
Hence, our class of solutions contains 
the stationary solutions and the time periodic 
solutions of (N-S) 
if the forces are independent of time and are periodic in time, respectively.
So we may not expect the time decay property for the solution obtained by Theorem \ref{thm;global} in general. 
Of course, if we replace the smallness condition by
\begin{equation*}
\| a \|_{n,\infty} 
+
\|\PP f\|_{L^1(0,\infty;L^{n,\infty})}
+
\sup_{0<t<\infty}t \|\PP f(t)\|_{n,\infty}
<\ep_n,
\end{equation*}
then we obtain a global strong solution $u$ with $\|u(t)\|_r\to 0$ as $t\to \infty$, for $r>n$.

(iii) Theorem \ref{thm;global} does not guarantee
that \textit{every} mild solution of (N-S) becomes
a strong solution of (N-S).
Indeed, the key of the proof based on the existence and the uniqueness of local solution of (N-S). 
So additional regularity is needed for the initial data and the mild solution of (N-S).
See, Theorem \ref{thm;localX} and Theorem \ref{thm;local weak uniqueness} below.  
\end{remark}
In Theorem \ref{thm;global}, the condition (A) plays an
important role for the strong solvability of (N-S)
in $L_{\sigma}^{n,\infty}(\R^n)$. 
The following theorem characterizes the functions
which satisfies the condition (A). 
For this purpose, we introduce the domain of the Stokes operator $-\Delta$ in $L_{\sigma}^{n,\infty}(\R^n)$ as
$D(-\Delta)=\{u\in L_{\sigma}^{n,\infty}(\R^n)\,;\,
\Delta u \in L^{n,\infty}(\R^n)\}$.
We find the following theorem in Lunardi
\cite{Lunardi 1995}, but we give another proof.
\begin{theorem}[{Lunardi \cite{Lunardi 1995}}]\label{thm;maximal}
Let $f\in L_{\sigma}^{n,\infty}(\R^n)$. Then it holds that
\begin{equation*}
\lim_{\ep\searrow 0}\| e^{\ep\Delta}f-f\|_{n,\infty}=0
\quad\text{if and only if}\quad
f \in \overline{D(-\Delta)}^{\|\cdot\|_{n,\infty}}.
\end{equation*}
Consequently, $\{e^{t\Delta}\}_{t\geq 0}$ is a bounded $C_0$-analytic semigroup on 
$\overline{D(-\Delta)}^{\|\cdot\|_{n,\infty}}$.
In other words,
$\overline{D(-\Delta)}^{\|\cdot\|_{n,\infty}}$
is the maximal subspace in $L_{\sigma}^{n,\infty}(\R^n)$
where the Stokes semigroup is $C_0$-semigroup. 
\end{theorem}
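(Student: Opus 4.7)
The plan is to handle the two implications separately, relying on two standard properties of the heat semigroup on $L^{n,\infty}(\R^n)$ that transfer to $L_{\sigma}^{n,\infty}(\R^n)$ via the boundedness of $\PP$: the uniform bound $\|e^{t\Delta}f\|_{n,\infty}\le M\|f\|_{n,\infty}$ for $t\ge 0$, and the analytic-type smoothing $\|\Delta e^{t\Delta}f\|_{n,\infty}\le (C/t)\|f\|_{n,\infty}$ for $t>0$. With these in hand, the direction $(\Rightarrow)$ is immediate: for every $\ep>0$ the smoothing estimate places $e^{\ep\Delta}f$ in $D(-\Delta)$, so the hypothesis $e^{\ep\Delta}f\to f$ in $L^{n,\infty}$ exhibits $f$ as a norm limit of members of $D(-\Delta)$.

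For the converse $(\Leftarrow)$, I would first reduce to the case $f=g\in D(-\Delta)$ by a standard three-term density argument using the uniform bound. For such a $g$, the scalar function $u(s,x):=e^{s\Delta}g(x)$ is smooth on $(0,\infty)\times\R^n$ with $\pt_s u(s,x)=e^{s\Delta}(\Delta g)(x)$, and $u(s,x)\to g(x)$ as $s\searrow 0$ at every Lebesgue point $x$ of $g$. Integrating this scalar ODE in $s$ yields the pointwise a.e.\ identity
\begin{equation*}
e^{\ep\Delta}g(x)-g(x)=\int_0^\ep e^{s\Delta}(\Delta g)(x)\,ds.
\end{equation*}
I would then pass to the $L^{n,\infty}$ norm via its equivalent characterization $\|u\|_{L^{n,\infty}}\simeq \sup_{0<|E|<\infty}|E|^{-(n-1)/n}\int_E|u|\,dx$, apply Fubini, and use the uniform $L^{n,\infty}$ bound on $e^{s\Delta}\Delta g$; this gives $\|e^{\ep\Delta}g-g\|_{n,\infty}\lesssim \ep\|\Delta g\|_{n,\infty}\to 0$, completing the converse.

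The delicate step is securing the pointwise identity above: because $\{e^{t\Delta}\}$ is not strongly continuous at $0$ on $L^{n,\infty}(\R^n)$, Bochner calculus for $s\mapsto e^{s\Delta}\Delta g$ is not freely available. My workaround stays at the scalar level, using only the classical ODE in $s$ at each fixed $x$ together with pointwise approximation-of-identity at Lebesgue points of $g$, thereby avoiding any appeal to strong continuity of $s\mapsto e^{s\Delta}\Delta g$ in $L^{n,\infty}$. Once the equivalence is proved, the concluding assertions are short corollaries: the subspace $\overline{D(-\Delta)}^{\|\cdot\|_{n,\infty}}$ is $e^{t\Delta}$-invariant (since $e^{t\Delta}$ commutes with $\Delta$ and thus preserves $D(-\Delta)$), strongly continuous at $0$ there by the equivalence, and inherits boundedness and analyticity from the ambient semigroup; maximality is automatic, because any closed subspace $Y\subset L_{\sigma}^{n,\infty}(\R^n)$ on which $e^{t\Delta}$ is strongly continuous at $0$ consists of elements satisfying the left-hand side of the equivalence, hence lies in $\overline{D(-\Delta)}^{\|\cdot\|_{n,\infty}}$.
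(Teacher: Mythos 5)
Your proposal is correct, and for the key implication it takes a genuinely different route from the paper. Both arguments ultimately rest on the same quantitative estimate $\|e^{\ep\Delta}f-f\|_{n,\infty}\leq C\ep\|\Delta f\|_{n,\infty}$ for $f\in D(-\Delta)$ followed by density (and both treat the forward direction as immediate from the smoothing bound), but the mechanism for establishing that estimate differs. The paper works by duality: it pairs $e^{\ep\Delta}f-f$ against $\phi\in C_{0,\sigma}^{\infty}(\R^n)$, writes $(e^{\ep\Delta}f-e^{\eta\Delta}f,\phi)=\int_\eta^\ep(\Delta e^{\theta\Delta}f,\phi)\,d\theta$ on $[\eta,\ep]$ where strong continuity is available, uses the weak-$*$ continuity of $e^{t\Delta}f$ at $t=0$ to send $\eta\searrow 0$, and then recovers the norm from the identification $L^{n,\infty}_{\sigma}=\bigl(L^{\frac{n}{n-1},1}_{\sigma}\bigr)^*$; as Remark \ref{rem;equivalent}(iii) indicates, this is precisely the ingredient that lets the argument generalize to abstract analytic semigroups that are merely weakly or weakly-$*$ continuous at $t=0$. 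You instead establish the identity $e^{\ep\Delta}g(x)-g(x)=\int_0^\ep e^{s\Delta}(\Delta g)(x)\,ds$ pointwise a.e.\ via the scalar heat flow and approximation of the identity at Lebesgue points, then pass to the norm through the integral characterization $\|u\|_{L^{n,\infty}}\simeq\sup_E|E|^{-1+\frac{1}{n}}\int_E|u|$ and Tonelli. This is more concrete and avoids the predual entirely, at the price of exploiting the explicit Gaussian kernel (so it would not transfer to an abstract sectorial operator). Two small points you should make explicit: to let the lower limit of integration go to $0$ you need $\int_0^\ep|e^{s\Delta}\Delta g(x)|\,ds<\infty$ a.e., which follows from $\sup_{s>0}|e^{s\Delta}h(x)|\lesssim Mh(x)$ and the a.e.\ finiteness of the maximal function of $\Delta g\in L^{n,\infty}\subset L^p+L^\infty$; and the a.e.\ convergence $e^{s\Delta}g(x)\to g(x)$ likewise uses this decomposition of $L^{n,\infty}$. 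Your treatment of the concluding assertions (invariance of $\overline{D(-\Delta)}^{\|\cdot\|_{n,\infty}}$, strong continuity, maximality) matches the paper's.
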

\begin{remark}\label{rem;equivalent}\rm
(i) We note that $\overline{C_{0,\sigma}^\infty(\R^n)}^{\|\cdot\|_{n,\infty}} \subsetneqq \overline{D(-\Delta)}^{\|\cdot\|_{n,\infty}}$. 
Indeed, take $f(x)\sim 1/|x|$ for $|x|\gg1$.
Then we see that $f \in D(-\Delta)$, 
but $f \notin \overline{C_{0,\sigma}^\infty(\R^n)}^{\|\cdot\|_{n,\infty}}$. 

(ii) The condition (A) is not a necessary 
condition for external forces 
for the strong solvability of the Stokes equations
and the Naiver-Stokes equations 
in $L_{\sigma}^{n,\infty}(\R^n)$. 
Indeed, take 
$f\in 
\bigl(L^{n,\infty}(\R^n)\setminus \overline{D(-\Delta)}^{\|\cdot\|_{n,\infty}}\bigr)\cap L^{\frac{n}{3},\infty}(\R^n)$ for $n\geq 4$ 
and consider $u=(-\Delta)^{-1}\PP f$ for the Stokes equations and
 $u=(-\Delta)^{-1}\PP f 
 - (-\Delta)^{-1}\PP [u\cdot \nabla u]$ 
 for the Navier-Stokes equations. 
 Then we see that $u \in D(-\Delta)$ and satisfies
 the equations for the strong sense.
 
 (iii) With our method to prove the Theorem \ref{thm;maximal}, 
in a general Banach space $X$,
for every bounded analytic semigroup
 $\{e^{tL}\}$ on $X$ with the property that
 $e^{tL}a$ is weakly or weakly$*$
 continuous at $t=0$ for all $a\in X$,
we also characterize the
 maximal subspace as $\overline{D(L)}^{X}$ where $\{e^{tL}\}$ is strongly continuous.
Here we never use the theory of sectorial operators as a different approach from Lunardi \cite{Lunardi 1995}.
\end{remark}
%
Next, we consider the existence of a local in time solution of (N-S) by the virtue of 
the subspace in Theorem \ref{thm;maximal}.
So we shall introduce a notation such as $X_{\sigma}^{n,\infty}:=
\overline{D(-\Delta)}^{\|\cdot\|_{n,\infty}}$.
On the other hand, for local existence of a weak mild or a mild solution, 
$f\in BC\bigl([0,T)\,;\,
L^{\frac{n}{3},\infty}(\R^n)\bigr)$ is not enough.
We restrict  $f(t)\in 
\widetilde{L}^{\frac{n}{3},\infty}(\R^n)
=\overline{L^{\frac{n}{3},\infty}(\R^n)\cap L^{\infty}(\R^n)}^{\|\cdot\|_{\frac{n}{3},\infty}}$ for $t\geq 0$
as a treatment of a spatial singularity of the force. For this space see, for instance,
Farwig and Nakatsuka and Taniuchi 
\cite{Farwig Nakatsuka Taniuchi 1,
Farwig Nakatsuka Taniuchi 2}.
\begin{theorem}\label{thm;localX}
Let $n\geq 3$ and $a \in X_{\sigma}^{n,\infty}$.

(i) Suppose
$f\in BC\bigl([0,\infty)\,;\, 
\widetilde{L}^{\frac{n}{3},\infty}(\R^n)\bigr)$ for $n\geq 4$ and 
$ f \in BC\bigl([0,\infty);{L}^{1}(\R^3)\bigr)$.
Then there exist $T>0$ and a weak mild solution 
$u \in BC\bigl([0,T)\,;\,X_{\sigma}^{n,\infty}\bigr)$
of (N-S) with
\begin{equation*}
u(t)\to a \quad\text{in }L_{\sigma}^{n,\infty} \quad
\text{as }t\searrow 0.
\end{equation*}

(ii) Suppose $f\in BC\bigl([0,\infty)\,;\,L^{p,\infty}(\R^n)\bigr)$ with some 
$\frac{n}{3}<p\leq n$. 
Then there exist $T>0$ and a weak mild solution 
$u\in BC\bigl([0,T)\,;\,
X_{\sigma}^{n,\infty}\bigr)$ 
of (N-S) with $u(t)\to a$
in $L^{n,\infty}(\R^n)$ as $t\searrow 0$.

(iii) Furthermore, if additionally $\PP f$ is H\"{o}lder continuous
on $[0,T)$ in $L^{n,\infty}_{\sigma}(\R^n)$ and satisfies
(A), i.e., $\PP f(t)\in X_{\sigma}^{n,\infty}$ 
for almost every $0<t<T$, 
then the weak mild solution $u$ 
obtained by (i) or (ii) above becomes the strong solution of (N-S) with
$u(t)\to a$ in $L_{\sigma}^{n,\infty}(\R^n)$ as $t\searrow 0$.
\end{theorem}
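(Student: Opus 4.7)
All three parts arise from iterating the map
\begin{equation*}
\Phi(u)(t)=e^{t\Delta}a+\int_{0}^{t}e^{(t-s)\Delta}\PP f(s)\,ds-\int_{0}^{t}\nabla\cdot e^{(t-s)\Delta}\PP(u\otimes u)(s)\,ds
\end{equation*}
inside $BC([0,T];X_{\sigma}^{n,\infty})$ with norm $\|u\|_{X_T}:=\sup_{0\leq t<T}\|u(t)\|_{n,\infty}$. By Theorem \ref{thm;maximal} the hypothesis $a\in X_{\sigma}^{n,\infty}$ forces $t\mapsto e^{t\Delta}a$ into $BC([0,T];X_{\sigma}^{n,\infty})$ and gives the uniform modulus $\omega_a(T):=\sup_{0<t<T}\|e^{t\Delta}a-a\|_{n,\infty}\to 0$ as $T\searrow 0$. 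Meyer's estimate from Section 3 bounds the bilinear part by $C\|u\|_{X_T}\|v\|_{X_T}$. The two central issues are (a) verifying that both Duhamel terms land in $X_{\sigma}^{n,\infty}$, not merely in $L_{\sigma}^{n,\infty}(\R^n)$, and (b) closing the fixed point in the critical case where no $T^{\alpha}$-gain is available.

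For \textbf{parts (i) and (ii)} my first task is to place the force Duhamel term in $X_{\sigma}^{n,\infty}$. In case (i), $f(s)\in\widetilde{L}^{n/3,\infty}(\R^n)$ is approximated uniformly in $s$ by $f_m(s)\in L^{n/3,\infty}\cap L^{\infty}$, for which the Duhamel integral obviously lies in $D(-\Delta)$; passing to the limit in $L^{n,\infty}$ and invoking Theorem \ref{thm;maximal} places the limit in $X_{\sigma}^{n,\infty}$. An analogous mollification in $L^{n/2,\infty}$ treats the bilinear term. In case (ii) the subcritical regularity $p>n/3$ produces a positive power of $T$ in the heat estimate for the source Duhamel term, so its $X_T$-norm tends to $0$ as $T\searrow 0$, and the remaining contraction reduces to a small-data problem for $w:=u-e^{t\Delta}a-\int_{0}^{t}e^{(t-s)\Delta}\PP f(s)\,ds$. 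In case (i), lacking the $T^{\alpha}$ gain, I would use the same decomposition $u=e^{t\Delta}a+w$ but must show that the "large-by-large" cross term $B(e^{s\Delta}a,e^{s\Delta}a)$ itself has $X_T$-norm tending to $0$: I establish this first for $a\in D(-\Delta)$ by direct heat smoothing of $e^{s\Delta}a$, and then extend to all $a\in X_{\sigma}^{n,\infty}$ by density together with the Meyer bound, which supplies the $L^{n,\infty}$-Lipschitz dependence $a\mapsto B(e^{s\Delta}a,e^{s\Delta}a)$. A standard contraction then closes the iteration on a sufficiently small interval and, together with $\omega_a(T)\to 0$ and the vanishing of both Duhamel terms at $t=0$, yields $u(t)\to a$ in $L^{n,\infty}$.

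\textbf{Part (iii)} upgrades the weak mild solution to a strong one via the abstract framework for evolution equations with non-$C_{0}$ semigroups developed in Section 3. For the source term I would split
\begin{equation*}
e^{(t-s)\Delta}\PP f(s)=\bigl(e^{(t-s)\Delta}-I\bigr)\PP f(t)+e^{(t-s)\Delta}\bigl(\PP f(s)-\PP f(t)\bigr)+\PP f(t)
\end{equation*}
and control each piece using H\"older continuity of $\PP f$ together with (A), which by Theorem \ref{thm;maximal} means $\PP f(t)\in X_{\sigma}^{n,\infty}$ for a.e.\ $t$; this yields both differentiability in $t$ and $\Delta\!\int_{0}^{t}e^{(t-s)\Delta}\PP f(s)\,ds$ continuous into $L_{\sigma}^{n,\infty}(\R^n)$. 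The bilinear term is handled analogously, the required H\"older continuity of $u\otimes u$ being inherited a posteriori from the mild-solution identity for $u$. Combining these identifications converts the integral equation into (DE) in $L_{\sigma}^{n,\infty}(\R^n)$.

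\textbf{Main obstacle.} The principal difficulty is the critical part (i): with no $T^{\alpha}$ gain from the source, closure of the fixed point rests entirely on the small-$T$ decay of $B(e^{s\Delta}a,e^{s\Delta}a)$, which is precisely where the characterization $X_{\sigma}^{n,\infty}=\overline{D(-\Delta)}^{\|\cdot\|_{n,\infty}}$ becomes indispensable. A secondary but pervasive technical burden is verifying that each Duhamel term genuinely lives in $X_{\sigma}^{n,\infty}$ rather than merely in $L_{\sigma}^{n,\infty}(\R^n)$; this requires a tailored approximation-and-limit argument for every term of the iteration, leaning on the density of $D(-\Delta)$ inside $X_{\sigma}^{n,\infty}$ and on the completeness of $\widetilde{L}^{n/3,\infty}$.
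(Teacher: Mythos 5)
Your overall architecture --- an iteration in $BC\bigl([0,T)\,;\,X_{\sigma}^{n,\infty}\bigr)$, smallness extracted from $a\in X_{\sigma}^{n,\infty}=\overline{D(-\Delta)}^{\|\cdot\|_{n,\infty}}$ by density rather than from a power of $T$, and approximation of $f$ in $\widetilde{L}^{\frac{n}{3},\infty}(\R^n)$ to control the force Duhamel term --- matches the paper's (the paper shifts by the constant $a$, writing $w_n=u_n-a$, rather than by $e^{t\Delta}a$; that difference is cosmetic). There are, however, three concrete gaps. First, you treat only the diagonal term $B(e^{s\Delta}a,e^{s\Delta}a)$ and then appeal to ``a standard contraction'', but in the critical case the linear-in-$w$ cross terms $B(e^{\cdot\Delta}a,w)$ and $B(w,e^{\cdot\Delta}a)$ carry the operator norm $C\|a\|_{n,\infty}$ with no gain in $T$, and that constant need not be less than one; the fixed point does not close until you also split $a=a_{\ep}+(a-a_{\ep})$ with $a_{\ep}\in D(-\Delta)\subset L_{\sigma}^{n,\infty}(\R^n)\cap L^{r}(\R^n)$, $r>n$, inside these cross terms, producing the bound $C\ep+CT^{\frac12-\frac{n}{2r}}\|a_{\ep}\|_{r}$ of \eqref{eq;ga1}. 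This is the same device you already use for the diagonal term, but it is precisely the step that makes the result free of any smallness of $a$, and omitting it leaves the contraction unproved. Second, the claim that the Duhamel integral of a mollified force $f_m\in BC\bigl([0,\infty)\,;\,L^{\frac{n}{3},\infty}\cap L^{\infty}\bigr)$ ``obviously lies in $D(-\Delta)$'' is false: $\Delta e^{\tau\Delta}$ costs $\tau^{-1}$, so $\int_0^t\Delta e^{(t-s)\Delta}\PP f_m(s)\,ds$ diverges for merely continuous $f_m$ --- if Duhamel integrals of continuous forcings were automatically in the domain of the generator, condition (A) and part (iii) would be superfluous. What is true, and what Lemma \ref{lem;tildeL} proves, is that the Duhamel term satisfies the criterion $\|e^{\ep\Delta}F(t)-F(t)\|_{n,\infty}\to0$ of Theorem \ref{thm;maximal} directly, by isolating the contribution of $s\in(t-\ep,t)$, where the extra integrability of $f_m$ is actually used.

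For part (iii), your plan to obtain the H\"older continuity of $u\otimes u$ ``a posteriori from the mild-solution identity'' is under-justified: the weak mild solution produced in (i)--(ii) carries no control of $\nabla u$ nor of $u$ in any $L^{r}(\R^n)$ with $r>n$, so H\"older continuity of $\PP[u\cdot\nabla u]$ with values in $L^{n,\infty}_{\sigma}(\R^n)$ --- which is what Lemma \ref{thm;abstract evo} requires --- is not available from your fixed point alone. The paper instead builds a second, Kato-type mild solution with the weighted auxiliary norms $t^{\frac12-\frac{n}{2\rho}}\|v\|_{\rho}$ and $t^{\frac12}\|\nabla v\|_{n,\infty}$ (Theorem \ref{thm;local}), upgrades it to a strong solution (Theorem \ref{thm;regularity}), and only then identifies it with the weak mild solution via the uniqueness argument of subsection \ref{subsec;UWMS}. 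Some bootstrap of this kind is indispensable before the abstract regularity lemma can be invoked.
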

\begin{remark}\rm
(i) If $n\geq 4$ and 
$\nabla a \in L^{\frac{n}{2},\infty}(\R^n)$, 
the weak mild solution $u$
obtained by (i) of Theorem \ref{thm;localX} is
actually a mild solution with 
$\nabla u \in BC\bigl([0,T)\,;\,L^{\frac{n}{2},\infty}(\R^n)\bigr)$.
Similarly, if $p=n$ or $\nabla a \in L^{q,\infty}(\R^n)$ with 
$\frac{1}{q}=\frac{1}{p}-\frac{1}{n}$,
$\frac{n}{3}<p<n$, then the weak mild solution
$u$ obtained by (ii) of Theorem \ref{thm;localX}
is actually a mild solution of (N-S).

(ii) The solution class $BC\bigl([0,T)\,;\, 
X_{\sigma}^{n,\infty}\bigr)$ is well known 
for the uniqueness of weak mild or mild solutions
of (N-S) since the Stokes semigroup is strongly continuous  for $t\geq 0$.

(iii) For the local in time solvability, 
$a \in \widetilde{L}_{\sigma}^{n,\infty}(\R^n)=\overline{L_{\sigma}^{n,\infty}(\R^n)\cap L^{\infty}(\R^n)}^{\|\cdot\|_{n,\infty}}$ 
and
$f \in BC\bigl([0,T)\,;\, L^{p,\infty}(\R^n)\bigr)$ for $\frac{n}{3}<p\leq n$
are also valid.
Since $X_{\sigma}^{n,\infty} \subset \widetilde{L}_{\sigma}^{n,\infty}(\R^n)$, 
$\widetilde{L}_{\sigma}^{n,\infty}(\R^n)$
is a wider class of initial data $a$ for  local weak mild or mild solutions 
$u\in BC\bigl((0,T)\,;\, 
L_{\sigma}^{n,\infty}(\R^n)\bigr)$ 
of (N-S) with $u(t)\rightharpoonup a$ weakly $*$ in $L^{n,\infty}(\R^n)$.

(iv) Borchers and Miyakawa \cite{Borchers Miyakawa AM1995}, and Koba \cite{Koba JDE 2017} consider 
the stability of the stationary solution of (N-S) in
$L_{0, \sigma}^{n,\infty}(\Omega)=
\overline{C_{0,\sigma}^{\infty}(\Omega)}^{\|\cdot\|_{n,\infty}}$. In the framework of $L_{0,\sigma}^{n,\infty}(\Omega)$, we expect that the asymptotic stability of 
the solution $u$ in the critical norm, i.e.,
$\lim\limits_{t\to\infty}\|u(t)\|_{n,\infty}=0$.
However, $X_{\sigma}^{n,\infty}$ seems not to allow the
time decay of the solution with the $L^{n,\infty}$-norm.
\end{remark}

The uniqueness theorem is expected within the 
solution class such as $BC\bigl([0,T)\,;\,
X_{\sigma}^{n,\infty}\bigr)$, 
see, for instance, \cite{Tsutsui AJDE 2011}.
Therefore,
the natural question is that 
when  $u\in BC\bigl([0,T)\,;\,
X_{\sigma}^{n,\infty}\bigr)$.
The following theorem implies that
if the singularity of data are well-controled, then the orbit of the solution is unique and stays in $X_{\sigma}^{n,\infty}$. 
\begin{theorem}\label{thm;local weak uniqueness}
Let $n\geq 3$ and let $a\in X_{\sigma}^{n,\infty}$
 and $f \in BC\bigl([0,T)\,;\, 
 \widetilde{L}^{\frac{n}{3},\infty}(\R^n)\bigr)$
 for $n\geq 4$ and
 $f\in BC\bigl([0,T)\,;\,L^{1}(\R^3)\bigr)$.
 Suppose $u$, $v$ are two 
 weak mild solutions of (N-S)
 with $u|_{t=0}=v|_{t=0}=a$. If
 \begin{equation*}
 u,v \in BC\bigl([0,T)\,;\,
 L_{\sigma}^{n,\infty}(\R^n)\bigr),
 \end{equation*}
 then it holds 
 \begin{equation*}
 u,v \in BC\bigl([0,T)\,;\, 
 X_{\sigma}^{n,\infty}\bigr)
 \quad\text{and}\quad
 u\equiv v.
 \end{equation*}
\end{theorem}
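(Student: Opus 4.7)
My plan is to first construct a reference local solution $\tilde u$ living in $X_\sigma^{n,\infty}$ by invoking Theorem~\ref{thm;localX}~(i) with initial value $a$, then to prove that $u$ and $v$ both coincide with $\tilde u$ on a small initial interval, and finally to iterate this identification along $[0,T)$. Because $u,v\in BC\bigl([0,T)\,;\,L^{n,\infty}_\sigma(\R^n)\bigr)$ with $u(0)=v(0)=a\in X_\sigma^{n,\infty}$, continuity at the origin already yields $u(t),v(t)\to a$ strongly in $L^{n,\infty}$ as $t\searrow 0$; combined with $\tilde u(t)\to a$ coming from Theorem~\ref{thm;localX}, each pairwise difference tends to zero in $L^{n,\infty}$.

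For the local identification, I subtract the weak-mild integral equations (IE$^*$) for $u$ and $\tilde u$ to obtain, with $w:=u-\tilde u$,
\begin{equation*}
w(t)=-\int_0^t \nabla\cdot e^{(t-s)\Delta}\PP\bigl(u\otimes w+w\otimes \tilde u\bigr)(s)\,ds,\qquad \|w(t)\|_{n,\infty}\to 0\text{ as }t\searrow 0.
\end{equation*}
The critical Meyer-type bilinear estimate of Section~3 gives
\begin{equation*}
\|w\|_{L^\infty(0,T_2;L^{n,\infty})}\le C\bigl(\|u\|_{L^\infty(0,T_2;L^{n,\infty})}+\|\tilde u\|_{L^\infty(0,T_2;L^{n,\infty})}\bigr)\|w\|_{L^\infty(0,T_2;L^{n,\infty})},
\end{equation*}
which is not by itself contractive. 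I close it using the Picard scheme of Theorem~\ref{thm;localX}: $\tilde u$ is the fixed point in a ball around the linear part $e^{t\Delta}a+\int_0^t e^{(t-s)\Delta}\PP f(s)\,ds$, and the strong continuity of $u$ at $t=0$, granted by $a\in X_\sigma^{n,\infty}$, forces $u$ into the same ball on a sufficiently small $[0,T_2]$; uniqueness of that fixed point then delivers $u\equiv\tilde u$ on $[0,T_2]$, and the same reasoning gives $v\equiv\tilde u$ there.

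For the global identification I run a standard continuation: let $T^*:=\sup\{\tau\in[0,T)\,;\,u\equiv v\equiv\tilde u\text{ on }[0,\tau]\}$. The previous step gives $T^*>0$. Supposing $T^*<T$, continuity in $L^{n,\infty}$ together with the closedness of $X_\sigma^{n,\infty}$ in $L^{n,\infty}$ yields $u(T^*)=v(T^*)=\tilde u(T^*)\in X_\sigma^{n,\infty}$; restarting (N-S) at time $T^*$ with this initial value and shifted force $f(\cdot+T^*)$ and re-applying the local step extends the coincidence to $[T^*,T^*+\delta)$ for some $\delta>0$, contradicting maximality. Hence $T^*=T$, so $u\equiv v\equiv\tilde u$ and, in particular, $u,v\in BC\bigl([0,T)\,;\,X_\sigma^{n,\infty}\bigr)$.

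The hard part is the local identification: because the bilinear form is critical in the weak $L^n$ norm and Meyer's estimate produces no $t^{\alpha}$ gain, the ambient hypothesis $u,v\in BC\bigl([0,T)\,;\,L^{n,\infty}_\sigma\bigr)$ alone cannot drive a contraction. The decisive input is $a\in X_\sigma^{n,\infty}$, which via Theorem~\ref{thm;maximal} ensures $e^{t\Delta}a\to a$ in $L^{n,\infty}$ and hence places $u-e^{t\Delta}a$ and $v-e^{t\Delta}a$ inside the contracting ball of the iteration scheme of Theorem~\ref{thm;localX} on a small initial interval; uniqueness of the Picard fixed point then finishes the identification.
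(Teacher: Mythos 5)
Your proposal is correct and follows essentially the same route as the paper: construct a reference solution in $X_{\sigma}^{n,\infty}$ via Theorem \ref{thm;localX}, identify $u$ and $v$ with it on a short initial interval by exploiting that $a\in X_{\sigma}^{n,\infty}$ gives strong continuity at $t=0$ (the paper phrases this as a direct estimate of $U=u-w$ with the splitting $G[u-a,U]+G[a,U]+G[U,w-a]+G[U,a]$ and the $a_\ep$-approximation of \eqref{eq;ga1}, which is exactly the contraction underlying your ``uniqueness of the Picard fixed point''), and then continue by the same $T_{\max}$ argument. No gaps.
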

\begin{remark}\rm
(i) Suppose $a \in X_{\sigma}^{n,\infty}$,
$f \in BC\bigl([0,\infty)\,;\,
\widetilde{L}^{\frac{n}{3},\infty}(\R^n)\bigr)$
if $n\geq 4$ and $f \in BC\bigl([0,\infty)
\,;\,L^1(\R^3)\bigr)$.
Then the weak mild solution $u$ of (N-S) 
obtained by
Theorem \ref{thm;global} satisfies
$u \in BC\bigl([0,\infty)\,;\, X_{\sigma}^{n,\infty}\bigr)$ and is unique without any smallness.

(ii) Similarly, for $a\in X_{\sigma}^{n,\infty}$,
the correspond mild solution $u$ of (N-S)
obtained Theorem \ref{thm;global} satisfies
$u\in BC\bigl([0,\infty)\,;\,X_{\sigma}^{n,\infty}\bigr)$ and is unique.
\end{remark}
In the above theorem, we restrict the singularity
of data. 
Next, we shall give the uniqueness criterion 
for wider class of data, especially, for general initial data.
For this purpose, we focus on the continuity
$\lim\limits_{t\to0}u(t)=a$ in 
$L^{n,\infty}(\R^n)$.
Furthermore, in such a case we find a
necessary condition for the initial data.

\begin{theorem}\label{thm;brezis}
(i) Let $a \in L_{\sigma}^{n,\infty}(\R^n)$ and 
$f\in BC\bigl([0,T)\,;\,\widetilde{L}^{\frac{n}{3},\infty}(\R^n)\bigr)$, $n\geq 4$ and
$f\in BC\bigl([0,T)\,;\,L^1(\R^3)\bigr)$.
Further let $u,v$ be two weak mild solutions of (N-S) with $u|_{t=0}=v|_{t=0}=a$ in the class
\begin{equation*}
u\in BC\bigl([0,T)\,;\, 
L_{\sigma}^{n,\infty}(\R^n)\bigr)
\quad\text{and}\quad
v\in BC\bigl([0,T)\,;\, 
\widetilde{L}_{\sigma}^{n,\infty}(\R^n)\bigr).
\end{equation*}
Then $a\in X_{\sigma}^{n,\infty}$
and $u\equiv v$.

(ii) Let 
$a\in \widetilde{L}_{\sigma}^{n,\infty}(\R^n)$
and $f\in BC\bigl([0,T)\,;\,
\widetilde{L}^{\frac{n}{3},\infty}(\R^n)\bigr)$, $n\geq 4$ and 
$f\in BC\bigl([0,T)\,;\, L^1(\R^3)\bigr)$.
Suppose $u,v$ be two weak mild solutions of (N-S) with $u|_{t=0}=v|_{t=0}=a$ in the class
\begin{equation*}
u,v\in BC\bigl([0,T)\,;\, 
L_{\sigma}^{n,\infty}(\R^n)\bigr).
\end{equation*}
Then $a\in X_{\sigma}^{n,\infty}$
and $u\equiv v$.
\end{theorem}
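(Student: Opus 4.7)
The plan is to reduce both parts to showing that the common initial value $a$ necessarily belongs to $X_{\sigma}^{n,\infty}$. Once this is in hand, Theorem \ref{thm;local weak uniqueness} applies directly to both solutions, since each lies in $BC([0,T);L_\sigma^{n,\infty})$ (note $\widetilde{L}_\sigma^{n,\infty}\subset L_\sigma^{n,\infty}$), and produces simultaneously $u,v\in BC([0,T);X_\sigma^{n,\infty})$ and $u\equiv v$. By Theorem \ref{thm;maximal} it therefore suffices to prove $\|e^{t\Delta}a-a\|_{n,\infty}\to 0$ as $t\searrow 0$. Picking any one of the two solutions that is continuous at $t=0$ with value $a$ in $L^{n,\infty}$ and calling it $w$ (in part (i) we take $w=v$, which in fact sits in the better space $\widetilde{L}_\sigma^{n,\infty}$; in part (ii) either $w$ works), the mild formula rearranges to
\begin{equation*}
e^{t\Delta}a-a=(w(t)-a)-N(t)-B(w,w)(t),
\end{equation*}
where $N(t)=\int_0^t e^{(t-s)\Delta}\PP f(s)\,ds$ and $B(w,w)(t)=-\int_0^t\nabla\cdot e^{(t-s)\Delta}\PP(w\otimes w)(s)\,ds$. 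The first summand vanishes in $L^{n,\infty}$ by hypothesis.

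For $N(t)$, the assumption $f\in BC([0,T);\widetilde{L}^{n/3,\infty})$ (resp.\ $BC([0,T);L^1)$ when $n=3$) lets us split $f(s)=f_1(s)+f_2(s)$ uniformly in $s$ with $\sup_s\|f_1(s)\|_\infty<\infty$ and $\sup_s\|f_2(s)\|_{n/3,\infty}<\eta$ for any prescribed $\eta>0$. Meyer's critical estimate from Section 3 bounds the $f_2$-contribution by $C\eta$ uniformly in $t$; the $f_1$-contribution admits the subcritical bound $\|\int_0^t e^{(t-s)\Delta}\PP f_1(s)\,ds\|_\infty\leq Ct\sup_s\|f_1(s)\|_\infty$, which after Lorentz interpolation against the critical $L^{n,\infty}$ bound tends to $0$ as $t\to 0$. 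Sending $\eta\to 0$ gives $N(t)\to 0$ in $L^{n,\infty}$.

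For $B(w,w)(t)$ the argument splits. In part (i), H\"older in Lorentz spaces lifts $v\in\widetilde{L}_\sigma^{n,\infty}$ to $v\otimes v\in BC([0,T);\widetilde{L}^{n/2,\infty})$, and the splitting argument combined with Meyer's bilinear estimate gives $B(v,v)(t)\to 0$. In part (ii) $w$ is only in $L^{n,\infty}$, and we exploit $a\in\widetilde{L}_\sigma^{n,\infty}$ through the centered expansion
\begin{equation*}
w(s)\otimes w(s)=a\otimes a+a\otimes(w(s)-a)+(w(s)-a)\otimes a+(w(s)-a)\otimes(w(s)-a).
\end{equation*}
By H\"older in Lorentz, the last three terms have $L^{n/2,\infty}$-norm $o(1)$ as $s\to 0$, so their Duhamel integrals vanish in $L^{n,\infty}$ as $t\to 0$ via Meyer's bilinear bound. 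The time-independent term $a\otimes a\in\widetilde{L}^{n/2,\infty}$ (approximate $a$ in $\widetilde{L}^{n,\infty}$ by $L^\infty$-bounded functions and apply H\"older in Lorentz) is then handled by the same splitting used for $N(t)$. Combining yields $B(w,w)(t)\to 0$ in $L^{n,\infty}$.

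Assembling the three convergences gives $e^{t\Delta}a\to a$ in $L^{n,\infty}$, whence $a\in X_\sigma^{n,\infty}$ by Theorem \ref{thm;maximal}, and Theorem \ref{thm;local weak uniqueness} finishes both (i) and (ii). The main obstacle is the vanishing of $B(w,w)(t)$ in part (ii): neither solution is known to enter any auxiliary subspace where Meyer's bilinear bound automatically improves, and the missing structure has to be manufactured from $a\in\widetilde{L}_\sigma^{n,\infty}$ via the centered expansion before the critical bilinear estimate can be upgraded from a uniform bound to a vanishing one at $t=0$.
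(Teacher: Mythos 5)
Your overall route is the same as the paper's: rearrange the integral equation to read off $e^{t\Delta}a-a=(w(t)-a)-N(t)-B(w,w)(t)$, show the force term and the bilinear term vanish in $L^{n,\infty}$ as $t\searrow 0$ by exploiting the $\widetilde{L}$-structure of the data, conclude $a\in X_{\sigma}^{n,\infty}$ via Theorem \ref{thm;maximal}, and then invoke Theorem \ref{thm;local weak uniqueness}. The paper packages the vanishing of $N(t)$ and of $B$ applied to $\widetilde{L}^{\frac{n}{2},\infty}$-valued tensors into Lemma \ref{lem;tildeL}, and in part (ii) uses the two-term split $G[u,u]=G[u-a,u]+G[a,u]$; your four-term centered expansion is a cosmetic variant of the same idea, and your observation that $a\otimes a\in\widetilde{L}^{\frac{n}{2},\infty}$ for $a\in\widetilde{L}^{n,\infty}_{\sigma}$ is exactly what makes the paper's $G[a,u]$-term tractable.

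There is, however, one step that fails as written: the treatment of the bounded part $f_1$ of the splitting. From $\bigl\|\int_0^t e^{(t-s)\Delta}\PP f_1(s)\,ds\bigr\|_{\infty}\leq Ct$ together with a uniform $L^{n,\infty}$ bound you cannot conclude convergence to $0$ in $L^{n,\infty}$: interpolating an $O(t)$ bound in $L^\infty$ against an $O(1)$ bound in $L^{n,\infty}$ only yields $o(1)$ in $L^{q,\infty}$ for $q>n$, never at the endpoint $q=n$ (the family $g_t(x)=\min(t,|x|^{-1})$ has $\|g_t\|_\infty=t\to 0$ while $\|g_t\|_{n,\infty}$ stays bounded below). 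The correct repair — and what the proof of Lemma \ref{lem;tildeL} actually does — is to interpolate at the level of the data rather than the output: $f_1\in L^{\frac{n}{3},\infty}\cap L^{\infty}\subset L^{p}$ for any $\frac{n}{3}<p<\infty$, whence the subcritical smoothing estimate gives
\begin{equation*}
\Bigl\|\int_0^t e^{(t-s)\Delta}\PP f_1(s)\,ds\Bigr\|_{n,\infty}
\leq C\int_0^t (t-s)^{\frac{1}{2}-\frac{n}{2p}}\|f_1(s)\|_{p}\,ds
\leq Ct^{\frac{3}{2}-\frac{n}{2p}}\sup_{s}\|f_1(s)\|_{p}\to 0 .
\end{equation*}
The same correction must be made wherever you invoke ``the same splitting'' for the bounded parts of $a\otimes a$ and $v\otimes v$ (there the exponent becomes $1-\frac{n}{2p}$ with $p>\frac{n}{2}$). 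With that local repair your argument is complete and coincides with the paper's.
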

\begin{remark}\rm
(i) The uniqueness criterion within the class
$BC\bigl([0,T)\,;\, \widetilde{L}_{\sigma}^{n,\infty}(\R^n)\bigr)$ is proved by Farwig and Taniuchi \cite{Farwig Taniuchi 2013} and 
Farwig and Nakatsuka and Taniuchi 
\cite{Farwig Nakatsuka Taniuchi 1,Farwig Nakatsuka Taniuchi 2}, see also Lions and Masmoudi 
\cite{Lions Masmoudi}.

(ii) Since $BC\bigl([0,T)\,;\,
L_{\sigma}^{n,\infty}(\R^n)\bigr)
\cap BC\bigl( [0,T)\,;\,
L^r(\R^n)\bigr)\subset
BC\bigl([0,T)\,;\,
\widetilde{L}_{\sigma}^{n,\infty}(\R^n)\bigr)$,
$a \notin X_{\sigma}^{n,\infty}$ implies that
$\lim\limits_{t\to0}u(t)\neq a$ in 
$L^{n,\infty}(\R^n)$ nor $u(t) \notin L^r(\R^n)$ for any $r>n$ if we have 
$u \in BC\bigl((0,T)\,;\,
\widetilde{L}_{\sigma}^{n,\infty}(\R^n)\bigr)$.
Moreover, Theorem \ref{thm;brezis} implies
that there is no weak mild solution
in $BC\bigl([0,T)\,;\,
L_\sigma^{n,\infty}(\R^n)\bigr)$
for $a\in \widetilde{L}_{\sigma}^{n,\infty}(\R^n)
\setminus X_{\sigma}^{n,\infty}$
if $X_{\sigma}^{n,\infty}\neq\widetilde{L}_{\sigma}^{n,\infty}(\R^n)$.  

(iii) For the case $f\equiv 0$, the uniqueness criterion in 
$BC\bigl([0,T)\,;\,
{L}_{\sigma}^{n,\infty}(\R^n)\bigr)$ for
$a$ which satisfies $e^{t\Delta}a \to a$ in $L^{n,\infty}(\R^n)$ as $t\to 0$ is well-known, see, for instance, the second author \cite{Tsutsui AJDE 2011}. However, the property
$\lim\limits_{t\to 0}e^{t\Delta}a=a$ is
actually derived through the behavior of the nonlinear term.
\end{remark}
%
%
\section{Preliminaries}
%
We firstly introduce the fundamental facts of the Lorentz spaces in terms of the real interpolation and
provide the theory of the Stokes semigroup on Lorentz spaces.
Next, we prepare the estimate of the Duhamel term of (N-S) with $L^{n,\infty}$-norm.
Finally, we refer to the strong solvability of 
abstract evolution equations on the Banach space,
where the associated semigroup 
is not strong continuous at $t=0$, 
introduced by our previous work 
\cite{Okabe Tsutsui periodic}.
%
\subsection{Lorentz spaces}
It is well known that the Lorentz space $L^{p,q}(\R^n)$ can be 
realized as a real interpolation space
between $L^{p_0}(\R^n)$ and $L^{p_1}(\R^n)$
with $0<p_0<p<p_1$, i.e., 
\begin{equation*}
L^{p,q}(\R^n)=\bigl( L^{p_0}(\R^n),L^{p_1}(\R^n)\bigr)_{\theta,q}, 
\end{equation*}
where $0<\theta<1$ satisfies 
$1/p ={(1-\theta)}/{p_0}+{\theta}/{p_1}$ and $1\leq q \leq \infty$.
We remark that if $1\leq q <\infty$ then $C_{0}^\infty(\R^n)$ is dense 
in $L^{p,q}(\R^n)$, $1\leq p < \infty$. 
Moreover, by the real interpolation, if $1\leq p <\infty$ and 
$1\leq q <\infty$, the dual space of $L^{p,q}(\R^n)$ is 
$L^{p^\prime,q^\prime}(\R^n)$ where 
${1}/{p}+{1}/{p^\prime}=1$ and 
${1}/{q}+{1}/{q^\prime}=1$.
See, Bergh and L\"{o}fstr\"{o}m \cite{Bergh Lofstrom},
Grafakos \cite{Grafakos}.
Furthermore we remark that if $1<p<\infty$ and $1\leq q \leq \infty$,
the Lorentz space $L^{p,q}(\R^n)$ may be regarded the Banach space, 
equipped with a norm which is equivalent to the quasi norm $\|\cdot\|_{p,q}$.
See, Grafakos \cite[Exercise 1.4.3]{Grafakos}.

In the whole space case, $\PP =( \delta_{j,k} I+ R_j R_k )_{1 \le j,k \le n}$, 
where $R_j := \partial_j(-\Delta)^{-1/2}$ is the Riesz transform.
Since the Riesz transforms or $\PP$ in $L^{p_0}(\R^n)$ coincide with 
those on $L^{p_1}(\R^n)$ over $L^{p_0}(\R^n)\cap L^{p_1}(\R^n)$ 
for $1<p_0<p_1<\infty$, $\PP$ is extended on $L^{p_0}(\R^n)+L^{p_1}(\R^n)$
and is bounded on $L^{p,q}(\R^n)$ for $p_0<p<p_1$ and $1\leq q \leq \infty$.
Furthermore, since $\PP$ is bounded and surjective 
form $L^p(\R^n)$ onto $L_{\sigma}^p(\R^n)$,
Miyakawa and Yamada \cite[Lemma 1.1]{Miyakawa Yamada HMJ1992} introduced
for $1<p<\infty$,
\begin{equation*}
\PP L^{p,\infty}(\R^n)=L^{p,\infty}_{\sigma}(\R^n)=
\bigl(L_{\sigma}^{p_0}(\R^n), L_{\sigma}^{p_1}(\R^n) \bigr)_{\theta, \infty},
\end{equation*}
where $1<p_0<p<p_1<\infty$ and $0<\theta < 1$ satisfies 
$1/p=(1-\theta)/p_0+\theta/p_1$. 
By similar manner, we define $L^{p,q}_{\sigma}(\R^n)$ 
for $1< p < \infty$ and $1\leq q \leq \infty$. 
Here we remark that
for $1<p<\infty$ and $1\leq q <\infty$
$C_{0,\sigma}^\infty(\R^n)$ is dense in $L^{p,q}_{\sigma}(\R^n)$ and 
the dual space of $L^{p,q}_{\sigma}(\R^n)$ 
is $L^{p^\prime,q^\prime}_{\sigma}(\R^n)$ where $1/p+1/p^\prime=1$ and
$1/q+1/q^\prime=1$.
Especially, 
$\bigl(L^{\frac{n}{n-1},1}_{\sigma}(\R^n)\bigr)^*=L^{n,\infty}_\sigma(\R^n)$. 

By the real interpolation, we also obtain the Helmholtz decomposition
for $1<p<\infty$ and $1\leq q \leq \infty$,
\begin{equation*}
L^{p,q}(\R^n)=L^{p,q}_{\sigma}(\R^n) \oplus G^{p,q}(\R^n),
\qquad \text{(direct sum)},
\end{equation*}
where $G^{p,q}(\R^n)=\{\nabla \pi \in L^{p,q}(\R^n)\,;\, 
\pi\in L^{p}_{loc}(\R^n)\}$ is the associated direct sum decomposition.
See, Borchers and Miyakawa \cite[Theorem 5.2]{Borchers Miyakawa AM1995}, 
Simader and Sohr \cite{Simader Sohr}, Yamazaki \cite{Yamazaki MA2000}, and
Geissert and Hieber and Nguyen \cite{Geissert Hieber Nguyen ARMA2016}.
By the virtue of the Helmholtz decomposition, 
for $1<p<\infty$ and $1\leq q <\infty$
it holds that
$(u,\PP v)=(\PP u,v)=(\PP u, \PP v)$ for all $u \in L^{p,q}(\R^n)$
and $v\in L^{p^\prime,q^\prime}(\R^n)$ with $1/p+1/p^\prime=1$ 
and $1/q+1/q^\prime=1$.

For $1<r<\infty$, 
we introduce $A_r=-\PP \Delta$ be the Stokes operator on $L_\sigma^r(\R^n)$ 
with the domain ${D}(A_r)=W^{2,r}(\R^n)\cap L^r_{\sigma}(\R^n)$.
We abbreviate $A_r$ to $A$ if we have no possibility of confusion. 
Then it is well known that $-A$ generates the bounded analytic 
$C_0$-semigroup $\{e^{-tA}\}_{t\geq 0}$ on $L^r_{\sigma}(\R^n)$.
Moreover, we note that adjoint of $A_r$ is $A_{r^\prime}$ and
 $(e^{-tA_r})^*=e^{-tA_{r^\prime}}$ where $1/r+1/r^\prime=1$.
With the fractional power $A^\alpha$, $0\leq \alpha \leq 1$
we see  that
\begin{equation*}
\|A^{\alpha} e^{-tA} a \|_r \leq Ct^{-\alpha} \|a\|_r.
\end{equation*}
by the standard semigroup theory. 
Moreover in case of $\R^n$, the Stokes 
semigroup is essentially the heat semigroup and has the kernel function
$(4\pi t)^{-\frac{n}{2}}\exp(-\frac{|x|^2}{4t})$. Hence by the Young inequality,
we easily obtain 
the following $L^p$-$L^q$ estimate.

\begin{proposition}\label{prop;LpLq}
Let $1<p \leq r <\infty$. Suppose $a \in L^p(\R^n)$. 
Then it holds that
\begin{equation}\label{eq;Lp Lq}
\begin{split}
&\|e^{t\Delta}a\|_r
\leq Ct^{-\frac{n}{2}(\frac{1}{p}-\frac{1}{r})}
\|a\|_p, 
\\
&\|\nabla e^{t\Delta}a\|_r\leq 
C t^{-\frac{n}{2}(\frac{1}{p}-\frac{1}{r})-\frac{1}{2}}
\|a\|_p,
\end{split}
\end{equation}
for all $t>0$, where $C=C(p,r)$ 
is independent of $t$ and $a$.
Moreover, suppose $a \in L^{p,\infty}(\R^n)$. 
Then it holds that
\begin{equation}\label{eq;wLpwLq}
\begin{split}
&\|e^{t\Delta}a\|_{r,\infty}
\leq Ct^{-\frac{n}{2}(\frac{1}{p}-\frac{1}{r})}
\|a\|_{p,\infty}, 
\\
&\|\nabla e^{t\Delta}a\|_{r,\infty}\leq 
C t^{-\frac{n}{2}(\frac{1}{p}-\frac{1}{r})-\frac{1}{2}}
\|a\|_{p,\infty},
\end{split}
\end{equation}
for all $t>0$.
\end{proposition}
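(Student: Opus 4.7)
The plan is to establish the strong-type estimates first by direct computation with the Gaussian kernel, and then to deduce the weak-type estimates by real interpolation, using the Lorentz-space identities recalled in Section 3.1.

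For the strong $L^p$--$L^r$ bounds, observe that on $\R^n$ the Stokes semigroup coincides with the heat semigroup, so $e^{t\Delta}a = G_t * a$ with $G_t(x) = (4\pi t)^{-n/2}\exp(-|x|^2/(4t))$. A routine scaling computation yields
\begin{equation*}
\|G_t\|_s = c_{n,s}\,t^{-\frac{n}{2}(1-\frac{1}{s})}, \qquad
\|\nabla G_t\|_s = c'_{n,s}\,t^{-\frac{n}{2}(1-\frac{1}{s})-\frac{1}{2}},
\qquad 1 \le s \le \infty.
\end{equation*}
Choosing $s$ via $1+1/r = 1/s+1/p$, equivalently $1/s = 1-(1/p-1/r) \in [1-1/p,\,1]$ when $p \le r$, and applying Young's convolution inequality delivers both of the strong-type inequalities at once.

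For the weak-type version, the strategy is to bracket $p$ between two nearby strong exponents in such a way that the gain $1/p-1/r$ is preserved on both sides. Fix $1<p_0<p<p_1<\infty$ sufficiently close to $p$ and set $1/r_i = 1/p_i - (1/p-1/r)$, so that $1<p_i\le r_i<\infty$ and $1/p = (1-\theta)/p_0+\theta/p_1$, $1/r = (1-\theta)/r_0+\theta/r_1$ hold with the same $\theta\in(0,1)$. The strong estimate already proved then gives
\begin{equation*}
\|e^{t\Delta}\|_{L^{p_i}\to L^{r_i}} \;\le\; C\,t^{-\frac{n}{2}(\frac{1}{p}-\frac{1}{r})}
\qquad(i=0,1),
\end{equation*}
a bound independent of $i$. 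Applying the real interpolation identities $(L^{p_0},L^{p_1})_{\theta,\infty}=L^{p,\infty}$ and $(L^{r_0},L^{r_1})_{\theta,\infty}=L^{r,\infty}$ recalled in Section 3.1 yields the first weak-type inequality; the gradient estimate follows by exactly the same bracketing applied to $\nabla e^{t\Delta}$, picking up the additional factor $t^{-1/2}$ from the strong gradient bound. The only mildly delicate point is choosing $p_0,p_1$ within the admissible Young range so that the corresponding $r_i$ remain in $(1,\infty)$; this is always possible for $1<p\le r<\infty$, and the degenerate case $p=r$ collapses to Young's inequality with $s=1$, so no separate argument is needed there.
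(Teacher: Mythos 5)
Your proof is correct and follows essentially the same route as the paper, which simply invokes the Gaussian kernel of $e^{t\Delta}$ and Young's inequality for \eqref{eq;Lp Lq} and leaves the weak-type case \eqref{eq;wLpwLq} to the real-interpolation identities of Section 3.1. Your write-up merely makes explicit the bracketing $p_0<p<p_1$, $1/r_i=1/p_i-(1/p-1/r)$ that the paper takes for granted, and the checks you perform (same $\theta$ on both scales, $t$-exponent independent of $i$, admissibility of the Young index $s$) are all accurate.
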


\begin{remark}\rm
When we restrict 
$a$ within $L^p_{\sigma}(\R^n)$ and $L_{\sigma}^{p,\infty}(\R^n)$
we see that 
$e^{t\Delta} a \in 
L^r_{\sigma}(\R^n)$ and
$e^{t\Delta} a \in 
L_{\sigma}^{r,\infty}(\R^n)$, respectively, 
and that \eqref{eq;Lp Lq} 
and \eqref{eq;wLpwLq} also hold.
\end{remark}

For $1<p<\infty$ and $1\leq q \leq \infty$, let $A_{p,q}=-\PP \Delta$ on 
$L^{p,q}_{\sigma}(\R^n)$ with its domain
${D}(A_{p,q})
=\{u\in L_{\sigma}^{p,q}(\R^n)\,;\, \pt_j\pt_k u \in L^{p,q}(\R^n),\,\,
j,k=1,\dots,n\}$. Then $A$ is a closed operator on $L^{p,q}_{\sigma}(\R^n)$
and $(Au,v)=(u,Av)$ holds for 
$u \in {D}(A_{p,q})$ and $v\in {D}(A_{p^\prime,q^\prime})$,
$1/p+1/p^\prime=1$ and $1/q+1/q^\prime=1$.
Here, we note that $\{e^{-tA}\}_{t\geq 0}$
is  bounded and analytic on $L^{p,\infty}_{\sigma}(\R^n)$ but not
strongly continuous on $L^{p, \infty}_{\sigma}(\R^n)$.
On the other hand, in case of $1\leq q <\infty$
it is easy to see that by the real interpolation that
$-A$ generates a bounded analytic $C_0$-semigroup $\{e^{-tA}\}_{t\geq 0}$
on $L^{p,q}_{\sigma}(\R^n)$ with the estimate 
\begin{equation*}
\begin{split}
&\|e^{-tA} a\|_{r,q} 
\leq C t^{-\frac{n}{2}(\frac{1}{p}-\frac{1}{r})}
\|a \|_{p,q},
\\
&\|\nabla e^{-tA} a\|_{r,q} 
\leq C t^{-\frac{n}{2}(\frac{1}{r}-\frac{1}{r})-\frac{1}{2}}
\|a \|_{p,q},
\end{split}
\end{equation*}
for  $a \in L^{p,q}_{\sigma}(\R^n)$ and for $1<p<r<\infty$.

\subsection{critical estimate}
In order to construct mild solutions of (N-S)
we deal with
\begin{equation*}
\int_0^t e^{(t-s)\Delta}\PP f(s)\,ds=
\int_0^\infty e^{s\Delta} \PP f(t-s)\chi_{[0,t]}(s)\,ds, \qquad t>0.
\end{equation*}
Here, $\chi_A$ is 
the usual characteristic function on the set $A$,
i.e., $\chi_A(x)=1$ if $x\in A$, otherwise $\chi_A(x)=0$.
For this aim, we introduced the following lemmas in the previous work \cite{Okabe Tsutsui periodic},
besed on the real interpolation approach by
Meyer \cite{Meyer} and Yamazaki \cite{Yamazaki MA2000}. 
\begin{lemma}[{\cite[Lemma 4.1]{Okabe Tsutsui periodic}}] \label{modi}
Let $n \ge 3$ and $1\leq p <\frac{n}{2}$, and define $p<q<\infty$ with $\frac{1}{p} - \frac{1}{q} = \frac{2}{n}$.
Then it holds
\begin{equation*}
\left\| 
\int_0^\infty \mathbb{P} e^{s \Delta} g(s)\, ds 
\right\|_{q, \infty} 
\leq A_p 
\begin{cases}
\displaystyle \sup_{s>0} \|g(s)\|_{p, \infty}, &\text{if } p>1, \\
\displaystyle \sup_{s>0} \|g(s)\|_1, & \text{if }p=1.
\end{cases}
\end{equation*}
\end{lemma}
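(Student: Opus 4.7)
My plan is to follow Meyer's $K$-method. The direct estimate coming from Proposition \ref{prop;LpLq} applied at the critical relation $\tfrac{1}{p}-\tfrac{1}{q}=\tfrac{2}{n}$ would give $\|\PP e^{s\Delta}g(s)\|_{q,\infty}\leq Cs^{-1}M$ (with $M$ denoting the right-hand side of the claim), which is non-integrable in $s$ both near $0$ and near $\infty$. The trick is to replace this single critical bound by two non-critical ones---one sub-critical and one super-critical with respect to $q$---integrate each over a complementary time interval, and recover $L^{q,\infty}$ through real interpolation.

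Choose auxiliary exponents $p<q_0<q<q_1<\infty$ and set $\alpha_i:=\tfrac{n}{2}\bigl(\tfrac{1}{p}-\tfrac{1}{q_i}\bigr)$, so that $\alpha_0<1<\alpha_1$. Proposition \ref{prop;LpLq} gives, uniformly in $s$,
\begin{equation*}
\|\PP e^{s\Delta}g(s)\|_{q_i,\infty}\leq Cs^{-\alpha_i}M,
\end{equation*}
the $p=1$ case being handled by viewing $\PP e^{s\Delta}$ as convolution with a single kernel (Riesz transforms applied to the Gauss kernel), for which the $L^{1}$--$L^{q_i,\infty}$ bound is immediate from Young's inequality. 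For each $T>0$ split
\begin{equation*}
F:=\int_0^\infty \PP e^{s\Delta}g(s)\,ds=F_0^T+F_1^T,\qquad F_0^T:=\int_0^T,\qquad F_1^T:=\int_T^\infty.
\end{equation*}
Since $\|\cdot\|_{q_i,\infty}$ is equivalent to a Banach norm on $L^{q_i,\infty}$, the integral triangle inequality together with $\alpha_0<1<\alpha_1$ yields
\begin{equation*}
\|F_0^T\|_{q_0,\infty}\leq CMT^{1-\alpha_0},\qquad \|F_1^T\|_{q_1,\infty}\leq CMT^{1-\alpha_1}.
\end{equation*}

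To conclude, I invoke the real-interpolation identity $L^{q,\infty}=(L^{q_0,\infty},L^{q_1,\infty})_{\theta,\infty}$ with $\tfrac{1}{q}=\tfrac{1-\theta}{q_0}+\tfrac{\theta}{q_1}$, and the $K$-functional characterization $\|F\|_{q,\infty}\sim\sup_{t>0}t^{-\theta}K(t,F)$. Applying the decomposition above gives $K(t,F)\leq\|F_0^T\|_{q_0,\infty}+t\|F_1^T\|_{q_1,\infty}\leq CM\bigl(T^{1-\alpha_0}+tT^{1-\alpha_1}\bigr)$. Optimizing in $T$ with $T=t^{1/(\alpha_1-\alpha_0)}$ gives $K(t,F)\leq CMt^{(1-\alpha_0)/(\alpha_1-\alpha_0)}$; a short computation shows that the critical scaling $\tfrac{1}{p}-\tfrac{1}{q}=\tfrac{2}{n}$ is exactly equivalent to $(1-\theta)\alpha_0+\theta\alpha_1=1$, which forces $\theta=(1-\alpha_0)/(\alpha_1-\alpha_0)$. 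This choice cancels the $t$-power and yields $\|F\|_{q,\infty}\leq CM$.

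The main technical nuisance I expect is moving between the quasi-norm $\|\cdot\|_{q_i,\infty}$ and its equivalent Banach norm cleanly enough that the integral triangle inequality and the $K$-functional characterization are simultaneously available. The only genuinely delicate point is the $p=1$ endpoint, where $\PP$ is not $L^1$-bounded and one must treat $\PP e^{s\Delta}$ as a composite operator rather than factoring it; once its convolution kernel is shown to satisfy the same $s^{-\alpha_i}$ scaling into $L^{q_i,\infty}$, the rest of the argument proceeds exactly as in the $p>1$ case.
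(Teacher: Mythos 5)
Your proposal is correct and is exactly the argument the paper relies on: the paper does not reprove Lemma \ref{modi} but imports it from \cite[Lemma 4.1]{Okabe Tsutsui periodic}, explicitly describing it as ``based on the real interpolation approach by Meyer and Yamazaki,'' and the remark following the lemma even records your starting observation that the single critical bound $\|\PP e^{s\Delta}g(s)\|_{q,\infty}\leq Cs^{-1}\|g(s)\|_{p,\infty}$ fails to be integrable at both endpoints. Your splitting at $T$, the two non-critical bounds with $\alpha_0<1<\alpha_1$, the optimization $T=t^{1/(\alpha_1-\alpha_0)}$ in the $K$-functional, and the verification that criticality forces $(1-\theta)\alpha_0+\theta\alpha_1=1$ all check out, including the $p=1$ endpoint handled via the Oseen kernel of the composite operator $\PP e^{s\Delta}$.
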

\begin{remark}\rm
(i) If $g \in L^{p,\infty}(\R^n)$ for some $1<p<\infty$, it is easy to
see that $\PP e^{s\Delta} g(s)=e^{s\Delta} \PP g(s)$ for a.e. $x\in \R^n$.

(ii) The bound $\|\mathbb{P} e^{s \Delta}g(s)\|_{q,\infty} \leq c s^{-1} \|g(s)\|_{p,\infty}$ is not enough for the convergence of the integral at both $s=0$ and $s=\infty$.
\end{remark}

We also apply Meyer's estimate \cite{Meyer} for the non-linear term.
See also, Yamazaki \cite{Yamazaki MA2000}. 

\begin{lemma}[\cite{Meyer},\, \cite{Yamazaki MA2000},\, {\cite[Lemma 4.2]{Okabe Tsutsui periodic}}] \label{Meyer}
Let $n \ge 2$ and $1 \le p < n$.
Denote $\frac{n}{n-1}\leq q <\infty$ with $\frac{1}{p} - \frac{1}{q} = \frac{1}{n}$.
then
\begin{equation*}
\left\|\int_0^\infty \nabla \mathbb{P} e^{s \Delta} g(s) \,ds \right\|_{q,\infty} 
\leq B_p
\begin{cases}
\; \displaystyle \sup_{s>0} \|g(s)\|_{p,\infty} &\text{if } p >1, \\
\; \displaystyle  \sup_{s>0} \|g(s)\|_1 & \text{if } p=1.
\end{cases}
\end{equation*}
\end{lemma}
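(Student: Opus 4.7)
The plan is to adapt the $K$-method argument of Meyer \cite{Meyer} and Yamazaki \cite{Yamazaki MA2000}, already carried out for Lemma \ref{modi}, with the single structural change that $\PP e^{s\Delta}$ is replaced by $\nabla \PP e^{s\Delta}$. This substitution gains an extra factor $s^{-\frac{1}{2}}$ in the pointwise $L^p$-$L^q$ estimate of Proposition \ref{prop;LpLq}, shifting the scaling relation from $\frac{1}{p}-\frac{1}{q}=\frac{2}{n}$ to $\frac{1}{p}-\frac{1}{q}=\frac{1}{n}$. Morally this expresses that $\int_0^\infty \nabla \PP e^{s\Delta}\,ds$ acts on $g$ like a Riesz potential of order $1$ on Lorentz spaces.

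First, for $p>1$, I would fix auxiliary exponents $1<p_0<p<p_1<n$ and define $q_0,q_1$ by $\frac{1}{p_i}-\frac{1}{q_i}=\frac{1}{n}$, so that $\frac{n}{n-1}<q_0<q<q_1<\infty$. Proposition \ref{prop;LpLq} then provides the strong endpoint bounds
\begin{equation*}
\|\nabla \PP e^{s\Delta}h\|_{q_i}\le C\, s^{-1}\|h\|_{p_i},\qquad i=0,1,
\end{equation*}
which are \emph{not} integrable in $s$ at either $0$ or $\infty$; the substance of the lemma is precisely to bypass this non-integrability via an interpolation argument.

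Next, for every threshold $\alpha>0$ in the weak-$L^q$ norm $\|F\|_{q,\infty}=\sup_\alpha \alpha\,|\{|F|>\alpha\}|^{1/q}$, I would simultaneously perform two cuts. For each $s$, decompose $g(s)=g^{\mu(s)}(s)+g_{\mu(s)}(s)$ into the parts exceeding and not exceeding a truncation height $\mu(s)$, so that $\|g^{\mu(s)}(s)\|_{p_0}$ and $\|g_{\mu(s)}(s)\|_{p_1}$ are controlled by appropriate powers of $M:=\sup_{s>0}\|g(s)\|_{p,\infty}$ through the standard distribution-function computation. In parallel, split the time integral at some $s=\tau$ depending on $\alpha$ and apply the strong $L^{p_0}$-$L^{q_0}$ bound on one side of $\tau$ together with the strong $L^{p_1}$-$L^{q_1}$ bound on the other. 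With a scale-invariant choice of $\mu(s)$ and $\tau$, both resulting $s$-integrals become \emph{absolutely} convergent, and the outcome yields the desired bound on the $K$-functional $K(\alpha,F;L^{q_0},L^{q_1})$; the interpolation identity $L^{q,\infty}=(L^{q_0},L^{q_1})_{\theta,\infty}$ then gives $\|F\|_{q,\infty}\le CM$.

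The $p=1$ case is handled analogously with $p_0=1$: no level-set decomposition is needed on the lower end since $L^1$ is already an endpoint of the interpolation scale, and $\|g(s)\|_{p,\infty}$ is replaced throughout by $\|g(s)\|_1$. The main technical obstacle is the careful choice of the two parameters $\mu(s)$ and $\tau(\alpha)$ so that the split $s$-integrals converge and yield the correct decay $\alpha\,|\{|F|>\alpha\}|^{1/q}\lesssim M$; once the scale invariance is pinned down, the calculation parallels that of Lemma \ref{modi} and requires only a bookkeeping adjustment of exponents, with no genuinely new ideas.
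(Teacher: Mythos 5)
The paper itself does not prove this lemma (it is quoted from \cite{Meyer}, \cite{Yamazaki MA2000} and \cite{Okabe Tsutsui periodic}), so your proposal is measured against the standard $K$-method argument. Your architecture is the right one and your diagnosis of the difficulty is correct: the scaling $\frac1p-\frac1q=\frac1n$ makes every diagonal bound decay exactly like $s^{-1}$, non-integrable at both ends. However, as written your two cuts do not reconstitute $F=\int_0^\infty\nabla\PP e^{s\Delta}g(s)\,ds$. Cutting simultaneously in time at $\tau$ and in height at $\mu(s)$ produces \emph{four} pieces, and you estimate only the two diagonal ones ($g^{\mu(s)}$ on one side of $\tau$ via $L^{p_0}\!\to\!L^{q_0}$, $g_{\mu(s)}$ on the other via $L^{p_1}\!\to\!L^{q_1}$). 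The two cross terms cannot be handled by the bounds you allow yourself: on $(0,\tau)$ the low part $g_{\mu(s)}$ is generically not in $L^{p_0}$ (since $p_0<p$, its $L^{p_0}$-norm is infinite for a genuine $L^{p,\infty}$ function), while feeding it into the $L^{p_1}$--$L^{q_1}$ bound gives $\int_0^\tau s^{-1}\mu(s)^{1-p/p_1}ds$, which diverges at $s=0$ whenever $\mu(s)\to\infty$ there; the piece $\int_\tau^\infty\nabla\PP e^{s\Delta}g^{\mu(s)}ds$ fails symmetrically at $s=\infty$. Nor can the time split be dropped: no single choice of $\mu(s)$ makes both full integrals $\int_0^\infty s^{-1}\mu(s)^{1-p/p_i}ds$, $i=0,1$, converge, since one requires $\mu\to\infty$ and the other $\mu\to0$ at each endpoint. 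So the cross terms are a genuine gap, not a notational omission.

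The repair is standard and should be stated. Either keep your four pieces and add the \emph{off-diagonal} heat bounds $\|\nabla\PP e^{s\Delta}h\|_{q_1}\le Cs^{-\beta}\|h\|_{p_0}$ with $\beta=\frac n2\bigl(\frac1{p_0}-\frac1{q_1}\bigr)+\frac12>1$ and $\|\nabla\PP e^{s\Delta}h\|_{q_0}\le Cs^{-\beta'}\|h\|_{p_1}$ with $\beta'<1$, which make the two cross integrals absolutely convergent for a slowly varying $\mu(s)$; or, more economically, drop the level-set cut entirely. Indeed, for any $p<q_0<q<q_1$ one has $\|\nabla\PP e^{s\Delta}g(s)\|_{q_i}\le Cs^{-\beta_i}\sup_s\|g(s)\|_{p,\infty}$ with $\beta_0<1<\beta_1$ (this single weak-to-strong estimate is where your truncation, or real interpolation on the source side, is used once and for all), so the time split alone gives $K(t,F;L^{q_0},L^{q_1})\le CM\bigl(\tau^{1-\beta_0}+t\,\tau^{1-\beta_1}\bigr)$, and optimizing $\tau$ yields $\|F\|_{q,\infty}\le CM$ via $L^{q,\infty}=(L^{q_0},L^{q_1})_{\theta,\infty}$. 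In that formulation the case $p=1$ indeed requires no truncation at all, as you observe.
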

As an application, Lemma \ref{modi} and 
Lemma \ref{Meyer} yields the continuity of the Duhamel terms associated with the forces and 
the nonlinear term.
\begin{lemma}\label{lem;contiduhamel}
Let $n\geq 3$ and $1<p<\frac{n}{2}$. 
For $f \in BC\bigl((0,\infty)\,;\,L^{p,\infty}(\R^n)\bigr)$
 it holds that 
 \begin{equation*}
 \int_0^t e^{(t-s)\Delta}\PP f(s)\,ds
 \in
 BC\bigl( (0,\infty)\,;\,L^{q,\infty}(\R^n)\bigr)
 \quad
 \text{with }
 \frac{1}{q}=\frac{1}{p}-\frac{2}{n}.
 \end{equation*}
\end{lemma}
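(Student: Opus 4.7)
\begin{pf}[Proof plan for Lemma \ref{lem;contiduhamel}]
Set $F(t):=\int_0^t e^{(t-s)\Delta}\PP f(s)\,ds$. The plan is to obtain the uniform bound immediately from Meyer's critical estimate and then to prove continuity at each interior point $t_0>0$ by splitting the Duhamel integral at an intermediate time where the two pieces are handled by two different mechanisms: analyticity of the semigroup for the ``far'' past and the Lorentz estimate of Lemma \ref{modi} for the ``near'' past.

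For the uniform bound, I would write $F(t)=\int_0^\infty \PP e^{r\Delta}g(r)\,dr$ with $g(r):=f(t-r)\chi_{[0,t]}(r)$ and invoke Lemma \ref{modi} to obtain $\|F(t)\|_{q,\infty}\le A_p\sup_{s>0}\|f(s)\|_{p,\infty}$, uniformly in $t$. For continuity at a fixed $t_0>0$, I would set $\delta=t_0/2$, restrict to $|h|<t_0/4$, and decompose $F(t)=F_A(t)+F_B(t)$ where $F_A(t)=\int_0^\delta e^{(t-s)\Delta}\PP f(s)\,ds$ and $F_B(t)=\int_\delta^t e^{(t-s)\Delta}\PP f(s)\,ds$. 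In $F_A$ the kernel stays away from the diagonal, so for $|h|<t_0/4$ the fundamental theorem of calculus and the analytic semigroup bound $\|\Delta e^{r\Delta}\|_{L^{p,\infty}\to L^{q,\infty}}\le Cr^{-2}$ (obtained by splitting $e^{r\Delta}=e^{(r/2)\Delta}e^{(r/2)\Delta}$) give $\|F_A(t+h)-F_A(t)\|_{q,\infty}\le C(t_0)|h|\sup_s\|f(s)\|_{p,\infty}$.

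For $F_B$, after the change of variable $r=t-s$ and writing $F_B(t)=\int_0^{t-\delta}e^{r\Delta}\PP f(t-r)\,dr$, one obtains (for $h>0$, the other case being symmetric)
\begin{equation*}
F_B(t+h)-F_B(t)=\int_{t-\delta}^{t+h-\delta}e^{r\Delta}\PP f(t+h-r)\,dr
+\int_0^{t-\delta}e^{r\Delta}\PP\bigl[f(t+h-r)-f(t-r)\bigr]\,dr.
\end{equation*}
The boundary piece is controlled pointwise in $r$ via $\|e^{r\Delta}\|_{L^{p,\infty}\to L^{q,\infty}}\le Cr^{-1}\le C/\delta$ on that interval, giving a bound of order $|h|\,\|f\|_{BC}/t_0$. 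The bulk piece is exactly the object to which Lemma \ref{modi} applies with $g(r)=\PP[f(t+h-r)-f(t-r)]\chi_{[0,t-\delta]}(r)$, yielding
\begin{equation*}
\Bigl\|\int_0^{t-\delta}e^{r\Delta}\PP[f(t+h-r)-f(t-r)]\,dr\Bigr\|_{q,\infty}\le A_p\sup_{s\in[\delta,t]}\|f(s+h)-f(s)\|_{p,\infty},
\end{equation*}
which tends to zero as $h\to 0$ by the uniform continuity of $f$ on the compact interval $[t_0/4,2t_0]\subset(0,\infty)$. Combining the three bounds gives $\|F(t+h)-F(t)\|_{q,\infty}\to 0$ as $h\to 0$, completing the proof.

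The main obstacle is the critical scaling: the naive estimate $\|e^{r\Delta}\PP\psi\|_{q,\infty}\le Cr^{-1}\|\psi\|_{p,\infty}$ produces a logarithmic divergence in $r$, so the bulk part of $F_B$ cannot be controlled by a pointwise-in-$r$ triangle inequality. The key is that Lemma \ref{modi} converts the failure of absolute integrability into a uniform-in-$s$ control, which is precisely what is needed to trade a differences-in-$f$ into a vanishing bound once $f$ is uniformly continuous on compacta.
\end{pf}
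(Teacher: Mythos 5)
Your proposal is correct and follows essentially the same strategy as the paper: split the Duhamel integral at an intermediate time, absorb the bulk difference via Lemma \ref{modi} together with the uniform continuity of $f$ on compact subintervals of $(0,\infty)$, control the boundary sliver with the pointwise $\|e^{r\Delta}\|_{L^{p,\infty}\to L^{q,\infty}}\lesssim r^{-1}$ bound, and treat the far-past contribution by smoothness of the semigroup at positive times (the paper writes this piece as $[e^{\tau\Delta}-e^{(\tau-\ep)\Delta}]F(t_0)$ and invokes strong continuity at $\tau>0$, whereas you differentiate under the integral with the $r^{-2}$ bound on $\Delta e^{r\Delta}$ — an immaterial difference). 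You also correctly identify the critical-scaling obstruction that makes Lemma \ref{modi} indispensable for the bulk term.
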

\begin{remark}\rm\label{rem;contiduhamel}
In case $p=1$, 
Lemma \ref{lem;contiduhamel} is also valid 
with  a slight modification
as $\int_0^t \PP e^{(t-s)\Delta}f(s)\,ds$ for 
$f \in BC\bigl((0,\infty)\,;\,L^1(\R^n)\bigr)
$. 
Moreover, 
for $\int_0^t \nabla e^{(t-s)\Delta}\PP f(s)\,ds$
we similarly obtain the continuity.
\end{remark}
\begin{proof}[Proof of Lemma \ref{lem;contiduhamel}] 
Put $F(t)=\int_0^t e^{(t-s)\Delta}\PP f(s)\,ds$ for $t>0$.
We shall prove the continuity of $F$
at fixed $t>0$.
Taking $t_0<t$ and decomposing $t=\tau+t_0$, 
we remark that 
\begin{equation*}
F(t)=F(\tau + t_0)=
e^{\tau \Delta}F(t_0) + \int_0^\tau
e^{(\tau-s)\Delta}\PP f(s+t_0)\,ds
=e^{\tau \Delta}F(t_0) + \int_0^\tau
e^{s\Delta}\PP f(t-s+t_0)\,ds.
\end{equation*}
For sufficiently small $\ep>0$ we see that
\begin{multline*}
F(t)-F(t-\ep)=
[e^{\tau \Delta}-e^{(\tau-\ep)\Delta}]F(t_0)
\\+
\int_{\tau-\ep}^{\tau}
e^{s\Delta}\PP f(\tau-s+t_0)\,ds
+
\int_0^{\tau-\ep}e^{s\Delta}
\bigl[\PP f(\tau-s+t_0)-\PP f(\tau-\ep-s+t_0)\bigr]\,ds
\\=:F_1+F_2 + F_3.
\end{multline*}
So it is easy to see that 
$\lim\limits_{\ep\searrow 0}\|F_1\|_{q,\infty}=0$.
Next we estimate $F_2$ as follows
\begin{equation*}
\begin{split}
\|F_2\|_{q,\infty}
&=
\left\|
\int_{\tau-\ep}^{\tau}
e^{s\Delta}\PP f(\tau-s+t_0)\,ds
\right\|_{q,\infty}
\\
&\leq
C\int_{\tau-\ep}^{\tau}
\frac{1}{s}\, \|\PP f(\tau-s+t_0)\|_{p,\infty}\,ds
\\
&\leq C \log \Bigl( \frac{\tau}{\tau-\ep}\Bigr)
\sup_{0<s<\infty}\|\PP f(s)\|_{p,\infty}
\to 0 \quad\text{as }\ep \searrow 0.
\end{split}
\end{equation*}
Finally, we estimate $F_3$. By Lemma \ref{modi}, we have
\begin{equation*}
\begin{split}
\|F_3\|_{q,\infty}
&=\left\|
\int_0^{\infty}
e^{s\Delta}\chi_{[0,\tau-\ep]}(s)
\bigl[\PP f(\tau-s+t_0)- 
\PP f(\tau-\ep-s+t_0)\bigr]\,ds
\right\|_{q,\infty}
\\
&\leq A_p\sup_{0<s<\tau-\ep} 
\| \PP f(\tau-s+t_0) 
-\PP f(\tau-\ep -s +t_0)\|_{p,\infty}
\\
&=A_p \sup_{t_0<\sigma<\tau+t_0-\ep}
\|\PP f(\sigma+\ep)-\PP f(\sigma)\|_{p,\infty}
=
A_p \sup_{t_0<\sigma<t-\ep}
\|\PP f(\sigma+\ep)-\PP f(\sigma)\|_{p,\infty}.
\end{split}
\end{equation*}
Hence, the uniform continuity of 
$\PP f$ on $[t_0,t]$ yields the left continuity of
$F(t)$ at $t>0$.
The right continuity is derived 
by the same manner.
This ends the proof.
\end{proof}

The following lemmas play an important role for the local existence and for the uniqueness criterion
of weak mild solutions of (N-S).
For this aim, we recall 
the space $\widetilde{L}^{p,\infty}(\R^n)
:=\overline{L^{p,\infty}(\R^n)\cap L^\infty(\R^n)}^{\|\cdot\|_{p,\infty}}$ for $1<p<\infty$
and introduce a space $Y^{p,\infty}_{p^\prime}
=\{f \in L^{p,\infty}(\R^n)\,;\, 
f\in L^{p^\prime}(\R^n)\}$ for $1<p<p^\prime<\infty$.
\begin{lemma}\label{lem;density}
Let $1<p<p^\prime<\infty$. For every $\ep>0$
and $f \in BC\bigl([0,\infty)\,;\,
\widetilde{L}^{p,\infty}(\R^n)\bigr)$
there exists $f_\ep \in 
BC\bigl([0,\infty)\,;\, Y^{p,\infty}_{p^\prime}\bigr)$
such that 
\begin{equation*}
\sup_{0\leq s <\infty} \|f(s)-f_\ep(s)\|_{p,\infty}<\ep,
\end{equation*}
i.e., $BC\bigl([0,\infty)\,;\,Y^{p,\infty}_{p^\prime}\bigr)$ is a dense subspace within 
$BC\bigl([0,\infty)\,;\,
\widetilde{L}^{p,\infty}(\R^n)\bigr)$.
\end{lemma}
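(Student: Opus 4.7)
The plan is to combine the pointwise density of $L^{p,\infty}(\R^n)\cap L^{\infty}(\R^n)$ in $\widetilde{L}^{p,\infty}(\R^n)$ with a continuous partition-of-unity argument on $[0,\infty)$, exploiting the inclusion $L^{p,\infty}\cap L^{\infty}\subset L^{p'}$ to force values into $Y_{p'}^{p,\infty}$. First I would verify that inclusion for $p<p'<\infty$: for $g\in L^{p,\infty}\cap L^{\infty}$, the layer-cake formula yields
\begin{equation*}
\int_{\R^n}|g|^{p'}\,dx = p'\int_0^{\|g\|_{\infty}}\lambda^{p'-1}\bigl|\{|g|>\lambda\}\bigr|\,d\lambda\leq \tfrac{p'}{p'-p}\|g\|_{p,\infty}^{\,p}\|g\|_{\infty}^{\,p'-p},
\end{equation*}
so $L^{p,\infty}\cap L^{\infty}\subset Y_{p'}^{p,\infty}$.

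Next, fixing $\ep>0$, for each $t\geq 0$ the definition of $\widetilde{L}^{p,\infty}$ supplies $g_t\in L^{p,\infty}\cap L^{\infty}$ with $\|f(t)-g_t\|_{p,\infty}<\ep/2$, and the continuity of $f\colon[0,\infty)\to L^{p,\infty}$ provides an open neighborhood $U_t\subset[0,\infty)$ of $t$ on which $\|f(s)-f(t)\|_{p,\infty}<\ep/4$. Combining these gives $\|f(s)-g_t\|_{p,\infty}<3\ep/4$ for all $s\in U_t$.

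To globalize, I would use the paracompactness of $[0,\infty)$: the open cover $\{U_t\}_{t\geq 0}$ admits a locally finite continuous partition of unity $\{\phi_j\}_{j\in\N}$ subordinate to a countable refinement $\{U_{t_j}\}_{j\in\N}$. Setting
\begin{equation*}
f_{\ep}(s):=\sum_{j\in\N}\phi_j(s)\,g_{t_j},
\end{equation*}
local finiteness makes $f_\ep(s)$ a finite convex combination of elements of $L^{p,\infty}\cap L^{\infty}\subset Y_{p'}^{p,\infty}$ for every $s$, and the continuity of the $\phi_j$ together with local finiteness yields continuity of $f_\ep$ into $L^{p,\infty}$. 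Using $\sum_j\phi_j(s)\equiv 1$ I would estimate
\begin{equation*}
\|f(s)-f_{\ep}(s)\|_{p,\infty}\leq\sum_{j:\,s\in U_{t_j}}\phi_j(s)\,\|f(s)-g_{t_j}\|_{p,\infty}\leq\tfrac{3\ep}{4}<\ep,
\end{equation*}
and the bound $\sup_s\|f_\ep(s)\|_{p,\infty}\leq\sup_t\|f(t)\|_{p,\infty}+\ep$ is immediate.

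The main obstacle is the bookkeeping needed to arrange simultaneously continuity in time and uniform-in-$s$ approximation on the unbounded parameter set $[0,\infty)$; the partition-of-unity construction delivers both because the sum is locally finite while the pointwise error $3\ep/4$ is uniform across indices. The single quantitative fact linking the density hypothesis on $f$ to the conclusion $f_\ep(s)\in Y_{p'}^{p,\infty}$ is the layer-cake inclusion $L^{p,\infty}\cap L^{\infty}\subset L^{p'}$ displayed at the outset.
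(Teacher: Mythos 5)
Your proof is correct and is essentially the paper's argument in slightly more abstract clothing: the paper also builds $f_\ep$ as locally finite convex combinations of time-independent approximants $\phi_{n,k}\in Y^{p,\infty}_{p^\prime}$, using uniform continuity of $f$ on each $[n,n+1]$ and piecewise linear interpolation on a grid, which is exactly a partition of unity by hat functions. The only substantive addition on your side is the explicit layer-cake verification of $L^{p,\infty}(\R^n)\cap L^{\infty}(\R^n)\subset L^{p^\prime}(\R^n)$, which the paper uses implicitly when it asserts that suitable $\phi_{n,k}\in Y^{p,\infty}_{p^\prime}$ exist.
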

\begin{remark}\rm
For a finite interval $[0,T]$, 
we easily obtain the same density property.
Moreover, it is easy to see $BC\bigl([0,\infty)\,;\,
C_{0}^{\infty}(\R^3)\bigr)$ is a 
dense subspace within 
$BC\bigl([0,\infty)\,;\,L^1(\R^3)\bigr)$.
\end{remark}
\begin{proof}[proof of Lemma \ref{lem;density}]
To begin with, we note that $L^{p,\infty}(\R^n)=\overline{Y^{p,\infty}_{p^\prime}}^{\|\cdot\|_{p,\infty}}$.
Take $\ep>0$.
Since $f$ is uniformly continuous on $[n,n+1]$, there exists $N_n$ such that
if $|s-t|<\frac{1}{N_n}$
for $s,t \in [n,n+1]$ then
$\|f(s)-f(t)\|_{p,\infty}<\ep/5$, 
for each $n=0,1,2,\dots$.
So set $t_{n,k}=n+k/N_n$, $k=0,1,\dots,N_n$ and
take $\phi_{n,k}\in Y^{p,\infty}_{p^\prime}$
so that $\|u(t_{n,k})-\phi_{n,k}\|_{p,\infty}
<\ep/5$.
Then we construct a function $f_\ep$ as
\begin{equation*}
f_{\ep}(t):= \phi_{n,k}+
\frac{\phi_{n,k+1}-\phi_{n,k}}{1/N_n}(t-t_{n,k})
\qquad\text{if }
\quad
n+\frac{k}{N_n}\leq t \leq n+\frac{k+1}{N_n}.
\end{equation*}
Then we easily see that 
$f_{\ep} \in BC\bigl([0,\infty)\,;\,
Y^{p,\infty}_{p^\prime}\bigr)$
and that for $t_{n,k}\leq t \leq t_{n,k+1}$
it holds that $\|f(t)-f_{\ep}(t)\|_{p,\infty}<\ep$. This completes the proof.
\end{proof}
\begin{lemma}\label{lem;tildeL}
Let $n\geq 3$. 
Suppose $f \in 
BC\bigl([0,\infty)\,;\, 
\widetilde{L}^{\frac{n}{3},\infty}(\R^n)\bigr)$
for $n\geq 4$ and 
$f\in BC\bigl([0,\infty)\,;\,L^1(\R^3)\bigr)$.
Then it holds that
\begin{equation}\label{eq;X1}
\int_0^t \PP e^{(t-s)\Delta} f(s)\,ds \in
BC\bigl([0,\infty)\,;\, X^{n,\infty}_{\sigma}\bigr)
\quad\text{with}\quad
\lim_{t\to0} 
\left\|\int_0^t \PP e^{(t-s)\Delta} f(s)\,ds
\right\|_{n,\infty}=0.
\end{equation}
Similarly, for a tensor $g=\bigl(g_{jk}\bigr)_{j,k=1}^n$, $g \in 
BC\bigl([0,\infty)\,;\,
\widetilde{L}^{\frac{n}{2},\infty}(\R^n)\bigr)$ 
it holds that
\begin{equation*}
\int_0^t \nabla \cdot e^{(t-s)\Delta}\PP g(s)\,ds \in
BC\bigl([0,\infty)\,;\, X^{n,\infty}_{\sigma}\bigr)
\quad\text{with}\quad
\lim_{t\to0} 
\left\|\int_0^t \nabla \cdot e^{(t-s)\Delta}\PP g(s)\,ds
\right\|_{n,\infty}=0.
\end{equation*}
\end{lemma}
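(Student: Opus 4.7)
The plan is to set $F(t) := \int_0^t \PP e^{(t-s)\Delta} f(s)\,ds$ and reduce the statement to three observations: continuity and boundedness of $F$ on $(0,\infty)$ (already handled by Lemma~\ref{lem;contiduhamel} and the subsequent remark); membership $F(t)\in X_\sigma^{n,\infty}$ for each $t$; and the vanishing $\|F(t)\|_{n,\infty}\to 0$ as $t\searrow 0$, which supplies continuity at $t=0$ with $F(0)=0$. The vanishing is the core new ingredient, and the range statement will fall out of the same approximation, combined with the characterization in Theorem~\ref{thm;maximal} and the closedness of $X_\sigma^{n,\infty}$.

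To execute, fix $\delta>0$ and use Lemma~\ref{lem;density} (for $n\ge 4$, with $p=n/3$ and any fixed $p'>n/3$) or density of $C_0^\infty(\R^3)$ in $L^1(\R^3)$ (for $n=3$) to pick $f_\delta$ lying in $BC\bigl([0,\infty);Y^{n/3,\infty}_{p'}\bigr)$, respectively in $BC\bigl([0,\infty);C_0^\infty(\R^3)\bigr)$, with
\[
\sup_{s\ge 0}\|f(s)-f_\delta(s)\|_{n/3,\infty}<\delta \qquad (\text{respectively } < \delta \text{ in }L^1 \text{ for }n=3).
\]
Let $F_\delta(t)$ denote the Duhamel integral with $f$ replaced by $f_\delta$. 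The extra integrability of $f_\delta$ will upgrade the regularity of $F_\delta(t)$: the standard $L^{p'}$-$L^n$ heat estimate
\[
\|e^{(t-s)\Delta}f_\delta(s)\|_n \le C(t-s)^{-\frac{n}{2}(1/p'-1/n)}\|f_\delta(s)\|_{p'}
\]
has exponent strictly less than $1$ precisely when $p'>n/3$, so integrating in $s$ yields $\|F_\delta(t)\|_n \le Ct^{\alpha}\sup_s\|f_\delta(s)\|_{p'}$ for some $\alpha>0$. In particular $F_\delta(t)\in L_\sigma^n(\R^n)$, which embeds continuously into $L^{n,\infty}(\R^n)$, and strong continuity of $\{e^{t\Delta}\}$ on $L^n$ shows via Theorem~\ref{thm;maximal} that $F_\delta(t)\in X_\sigma^{n,\infty}$. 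Meanwhile Lemma~\ref{modi}, applied to $f-f_\delta$ with exponents $p=n/3$, $q=n$, yields
\[
\|F(t)-F_\delta(t)\|_{n,\infty} \le A_{n/3}\,\delta \qquad\text{uniformly in } t\ge 0.
\]
Since $X_\sigma^{n,\infty}$ is closed in $L^{n,\infty}$ and $\delta$ is arbitrary, this gives at once $F(t)\in X_\sigma^{n,\infty}$ and $\limsup_{t\searrow 0}\|F(t)\|_{n,\infty}\le A_{n/3}\,\delta \to 0$.

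The tensor statement is proved in parallel: replace $\PP e^{(t-s)\Delta}f(s)$ by $\nabla\cdot e^{(t-s)\Delta}\PP g(s)$, replace Lemma~\ref{modi} by Lemma~\ref{Meyer} (with $p=n/2$, $q=n$), and use the gradient heat estimate $\|\nabla e^{(t-s)\Delta}g_\delta(s)\|_n \le C(t-s)^{-1/2-\frac{n}{2}(1/p'-1/n)}\|g_\delta(s)\|_{p'}$, which is integrable at $s=t$ exactly when $p'>n/2$---precisely the range supplied by Lemma~\ref{lem;density}. The principal obstacle is that the Meyer-type bounds are scale-critical and so cannot by themselves yield either smallness at $t=0$ or membership in the strictly smaller subspace $X_\sigma^{n,\infty}$; the approximation-plus-gain-of-integrability device resolves both issues simultaneously.
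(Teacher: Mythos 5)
Your proposal is correct, and for the key step it takes a genuinely shorter route than the paper. Both arguments share the same skeleton: approximate $f$ by $f_\delta\in BC\bigl([0,\infty);Y^{n/3,\infty}_{p'}\bigr)$ via Lemma \ref{lem;density}, control the error term uniformly in $t$ by Lemma \ref{modi} (giving $\|F(t)-F_\delta(t)\|_{n,\infty}\le A_{n/3}\delta$), and get the vanishing at $t=0$ from the positive power $t^{3/2-n/(2p')}$ supplied by the extra integrability of $f_\delta$ --- this part of your proof is essentially identical to the paper's. Where you diverge is the membership $F(t)\in X_\sigma^{n,\infty}$: the paper verifies the defining condition directly, writing $e^{\ep\Delta}F(t)-F(t)=F^1+F^2+F^3$ (a time-shift plus boundary integrals), estimating each piece, and then invoking Theorem \ref{thm;maximal}; you instead observe that $F_\delta(t)\in L^n_\sigma(\R^n)$, that $L^n_\sigma\cap L^{n,\infty}_\sigma\subset X_\sigma^{n,\infty}$ (strong continuity of the heat semigroup on $L^n$ together with $\|\cdot\|_{n,\infty}\le\|\cdot\|_n$ and the characterization in Theorem \ref{thm;maximal}), and then pass to the limit using the closedness of $X_\sigma^{n,\infty}$ and the uniform error bound. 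This eliminates the entire second half of the paper's proof and reuses one approximation for both conclusions, which is a real economy. Two small points to tighten in a final write-up: the exponent $p'$ must be taken in $(n/3,n]$ (resp. $(n/2,n]$ for the tensor term) so that the $L^{p'}$--$L^n$ smoothing estimate is actually available (the paper is equally loose here); and the inequality $\limsup_{t\searrow 0}\|F(t)\|_{n,\infty}\le A_{n/3}\delta$ tacitly uses $\|F_\delta(t)\|_{n,\infty}\le\|F_\delta(t)\|_n\le Ct^\alpha\to0$, which you have established but should state when splitting by the triangle inequality.
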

\begin{remark}\label{rem;DuhamelLocal}\rm
Lemma \ref{lem;tildeL} plays an crucial role to construct a weak mild solution 
$u\in BC\bigl([0,\infty)\,;\,X^{n,\infty}_{\sigma} \bigr)$ by the iteration scheme, where the uniqueness is guaranteed.
For $f\in BC\bigl([0,\infty)\,;\, 
L^{p,\infty}(\R^n)\bigr)$ with some 
$\frac{n}{3}<p\leq n$, 
we easily see that $\int_0^t e^{(t-s)\Delta}\PP f(s)\,ds \in BC\bigl([0,T)\,;\,
X_{\sigma}^{n,\infty}\bigr)$ with 
$\lim\limits_{t\to 0}
\bigl\|\int_0^t e^{(t-s)\Delta}\PP f(s)\,ds
\bigr\|_{n,\infty}=0$ for finite $T>0$
instead of \eqref{eq;X1}, estimating $F^1$,
$F^2$ and $F^3$ below 
just by $L^p$-$L^q$ estimate. 
\end{remark}
\begin{proof}\rm
Put $F(t)=\int_0^t e^{(t-s)\Delta}\PP f(s)\,ds$. 
We firstly show that $F(t) \to 0$ in $L^{n,\infty}(\R^n)$ 
as $t \to 0$.
Take $\eta>0$. By Lemma \ref{lem;density} 
choose
$f_\eta \in BC\bigl([0,\infty)\,;\,
Y^{\frac{n}{3},\infty}_{p}\bigr)$ with some
$\frac{n}{3}<p<\infty$ such that 
$\sup\limits_{0\leq s <\infty}\|f(s)-f_{\eta}(s)\|_{\frac{n}{3},\infty}<\frac{\eta}{2A_{\frac{n}{3}}}$, where $A_{\frac{n}{3}}$ is the constant in Lemma \ref{modi} when $p=\frac{n}{3}$.
By Lemma \ref{modi} we have
\begin{equation*}
\begin{split}
\|F(t)\|_{n,\infty} 
&\leq 
\Bigg\| \int_0^\infty  e^{(t-s)\Delta}
\PP 
[f(s)-f_{\eta}(s)]\chi_{[0,t]}(s)\,ds\Bigg\|_{n,\infty}
+
\Bigg\| \int_0^t e^{(t-s)\Delta}
\PP f_{\eta}(s)\,ds\Bigg\|_{n,\infty}
\\
&\leq
A_{\frac{n}{3}}\sup_{0\leq s <\infty}
\|f(s)-f_{\eta}(s)\|_{\frac{n}{3},\infty}
+
C\int_0^t (t-s)^{\frac{1}{2}-\frac{n}{2p}}
\|f_\eta(s)\|_{p}\,ds
\\
&\leq
\frac{\eta}{2} + Ct^{\frac{3}{2}-\frac{n}{2p}}
\sup_{0\leq s <\infty}\|f_\eta(s)\|_{\frac{n}{3},\infty}.
\end{split}
\end{equation*}
Since
$\frac{3}{2}-\frac{n}{2p}>0$, there exists $\delta>0$ such that if $0<t<\delta$ then
$\|F(t)\|_{n,\infty}<\eta$. This prove the
continuity of $F(t)$ at $t=0$. 

Next we show $F(t)\in X^{n,\infty}_{\sigma}$ for each $t>0$. It holds that for sufficiently small $\ep>0$
\begin{equation*}
\begin{split}
e^{\ep\Delta}F(t)-F(t)
&=
\int_0^t e^{(s+\ep)\Delta}\PP f(t-s)\,ds
-
\int_0^t e^{s\Delta}\PP f(t-s)\,ds
\\
&=\int_\ep^{t+\ep}e^{s\Delta}f(t+\ep-s)\,ds
-\int_0^t e^{s\Delta}\PP f(t-s)\,ds
\\
&=
\int_\ep^t e^{s\Delta}\PP[f(t+\ep-s)-f(t-s)]\,ds
+
\int_t^{t+\ep} e^{s\Delta}\PP f(t+\ep-s)\,ds
+
\int_0^{\ep} e^{s\Delta} f(t-s)\,ds
\\
&=:F^1+F^2+F^3.
\end{split}
\end{equation*}
By Lemma \ref{modi} with $p=\frac{n}{3}$, we have
\begin{equation*}
\|F^1\|_{n,\infty} \leq A_{\frac{n}{3}}
\sup_{\ep<s<t} \|f(t+\ep-s)-f(t-s)\|_{\frac{n}{3},\infty}
\leq 
A_{\frac{n}{3}}
\sup_{0<s<t-\ep}
\|f(s+\ep)-f(s)\|_{\frac{n}{3},\infty}.
\end{equation*}
Hence the uniform continuity of $f$ on 
$[0,t]$ yields $\|F^1\|_{n,\infty}\to 0$ as 
$\ep \to 0$.
Next we see that
\begin{equation*}
\|F^2\|_{n,\infty} \leq
C\int_t^{t+\ep}\frac{1}{s}\,
\sup_{0\leq s<\infty} 
\|f(s)\|_{\frac{n}{3},\infty}\,ds
\leq
C\log \left(\frac{t+\ep}{t}\right)
\sup_{0\leq s<\infty} 
\|f(s)\|_{\frac{n}{3},\infty}
\to 0 \quad\text{as }\ep \to 0.
\end{equation*}
Finally, we estimate $F^3$.
Take arbitrary $\eta>0$ and take $f_\eta$ as above. Then it holds that
\begin{equation*}
\begin{split}
\|F^3\|_{n,\infty} 
&\leq
\Bigg\|\int_0^{\ep} e^{s\Delta}
\PP[f(t-s)-f_\eta(t-s)]\,ds\Bigg\|_{n,\infty}
+
\Bigg\|\int_0^{\ep} e^{s\Delta}
\PP f_{\eta}(t-s)\,ds\Bigg\|_{n,\infty}
\\
&\leq \frac{\eta}{2}
+C\ep^{\frac{3}{2}-\frac{n}{2p}} 
\sup_{0\leq s<\infty}\|f_\eta(s)\|_p.
\end{split}
\end{equation*}
Then for sufficiently small $\ep>0$
we obtain $\|F^3\|_{n,\infty}<\eta$.
Therefore, $e^{\ep\Delta}F(t)-F(t)\to 0$ in 
$L^{n,\infty}(\R^n)$ as $\ep \to 0$.
By Theorem \ref{thm;maximal}, $F(t)\in X^{n,\infty}_{\sigma}$ for each $t>0$. 
Moreover, we note that 
$F(0)=0\in X^{n,\infty}_{\sigma}$.

For the case $f\in BC\bigl([0,\infty)\,;\,
L^1(\R^3)\bigr)$, take $f_\eta \in
BC\bigl([0,\infty)\,;\, C_{0,\sigma}^{\infty}(\R^3)\bigr)$ as above. Then same procedure above
holds true.

We remark that the same argument is applicable to $\int_0^t \nabla\cdot e^{(t-s)\Delta}\PP g(s)\,ds$.
This completes the proof.
\end{proof}
\subsection{Abstract evolution equations}
In this subsection, we develop
 a theory of abstract evolution equations
with the semigroup which is not strongly continuous at $t=0$, introduced by the previous work \cite{Okabe Tsutsui periodic}.

For a while, let $A$ be a general closed operator on a Banach space $X$ 
and $\{e^{tA}\}$ a bounded and analytic on $X$
with the estimates
\begin{equation}\label{eq;appendix 1}
\sup_{0<t<\infty}\|e^{t A} \|_{\mathcal{L}(X)} \leq N,
\quad
\|Ae^{tA}\|_{\mathcal{L}(X)} \leq \frac{M}{t}, 
\quad t>0,
\end{equation}
where $\mathcal{L}(X)$ is the space of 
all bounded linear operators on $X$ equipped with
the operator norm.
Especially, we note that $e^{tA}$ is strongly  continuous 
in $X$ for $t\neq 0$.

\begin{definition}
Let $\theta\in (0,1]$. We call $f$ is the H\"{o}lder continuous on $[0,\infty)$ with value in $X$ with the order $\theta$, if 
for every $T>0$ there exists $K_T>0$ such that 
\begin{equation*}
\| f(t)-f(s)\|_X \leq K_T|t-s|^\theta,
\qquad 0\leq t\leq T, \, 0 \leq s \leq T. 
\end{equation*}
\end{definition}

\begin{assumption}
Let $f: [0,\infty)\to X$. We assume for every $t>0$
\begin{equation*}\tag{A}
\lim_{\ep\searrow 0} \|e^{\ep A}f(t) -f(t)\|_X=0.
\end{equation*}
\end{assumption}

\begin{lemma}[{\cite[Lemma 3.1]{Okabe Tsutsui periodic}}]\label{thm;abstract evo}
Let $a \in X$ and let $f\in C\bigl([0,\infty)\,;\,X\bigr)$
 be the H\"{o}lder continous on $[0,\infty)$ with 
value in $X$ with order $\theta>0$ and satisfy Assumption.
Then
\begin{equation*}
 u(t)=e^{tA}a + \int_0^t e^{(t-s)A} f(s)\,ds
\end{equation*}
satisfies 
 $u \in C^1\bigl((0,\infty)\,;\,X\bigl)$, 
 $Au \in C\bigl((0,\infty)\,;\,X\bigr)$
 and
\begin{equation*}
\frac{d}{dt}u =Au + f \quad\text{in }X\quad t>0. \\
\end{equation*}
\end{lemma}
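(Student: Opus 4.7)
The plan is to mimic the classical Pazy/Lunardi proof for inhomogeneous Cauchy problems driven by an analytic semigroup, and to pinpoint the unique place where strong continuity of $\{e^{tA}\}$ at $t=0$ would normally be used so that Assumption (A) can be invoked precisely there. Since $\{e^{tA}\}$ is bounded and analytic on $X$, the orbit $t\mapsto e^{tA}a$ is automatically $C^1$ on $(0,\infty)$ with derivative $Ae^{tA}a$, regardless of behavior at $t=0$. It therefore suffices to analyze the Duhamel term $v(t):=\int_0^t e^{(t-s)A}f(s)\,ds$.

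First I would split using the standard H\"older decomposition
$$v(t) = \int_0^t e^{(t-s)A}\bigl[f(s)-f(t)\bigr]\,ds + \int_0^t e^{sA}f(t)\,ds =: v_1(t) + v_2(t).$$
For $v_1$, combining \eqref{eq;appendix 1} with the H\"older bound gives
$$\bigl\|A\,e^{(t-s)A}[f(s)-f(t)]\bigr\|_X \le \frac{M}{t-s}\,K_T|t-s|^{\theta} = MK_T(t-s)^{\theta-1},$$
which is integrable since $\theta>0$. Closedness of $A$ then yields $v_1(t)\in D(A)$ with $Av_1(t)=\int_0^t A\,e^{(t-s)A}[f(s)-f(t)]\,ds$, and continuity in $t$ follows by dominated convergence.

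The treatment of $v_2$ is where Assumption (A) becomes essential. For $\ep>0$, using analyticity of $\{e^{tA}\}$ away from $0$,
$$A\int_\ep^t e^{sA} f(t)\,ds = \int_\ep^t \tfrac{d}{ds}e^{sA}f(t)\,ds = e^{tA}f(t) - e^{\ep A}f(t).$$
As $\ep\searrow 0$, Assumption (A) gives $e^{\ep A}f(t)\to f(t)$ in $X$, while $\int_\ep^t e^{sA}f(t)\,ds\to v_2(t)$ by the uniform bound $\sup_{s\ge 0}\|e^{sA}\|_{\mathcal{L}(X)}\le N$ and dominated convergence. Closedness of $A$ forces $v_2(t)\in D(A)$ with $Av_2(t)=e^{tA}f(t)-f(t)$, so altogether
$$Au(t) = Ae^{tA}a + \int_0^t A\,e^{(t-s)A}[f(s)-f(t)]\,ds + e^{tA}f(t) - f(t),$$
from which continuity of $Au$ on $(0,\infty)$ is read off directly.

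To close the proof, I would estimate the difference quotient $(v(t+h)-v(t))/h$ via the same H\"older split and show it converges to $Av(t)+f(t)$; this reproduces the classical Pazy computation once the formula for $Av(t)$ is in hand, giving both $u\in C^1((0,\infty);X)$ and $\tfrac{du}{dt}=Au+f$. The main obstacle, and the only genuinely non-classical step, is the identification of $Av_2(t)$: without Assumption (A) the term $e^{\ep A}f(t)$ has no limit in $X$ as $\ep\searrow 0$, the closedness argument collapses, and one cannot realize the equation in the topology of $X$. Assumption (A) is thus precisely the minimal compatibility condition that revives the classical analytic-semigroup proof in the absence of strong continuity of $\{e^{tA}\}$ at $t=0$.
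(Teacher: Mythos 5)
Your proposal is correct and is essentially the argument behind the cited result: the paper does not reprove this lemma here but imports it from the authors' earlier work, and the expected proof is exactly the classical H\"older-splitting argument for bounded analytic semigroups, with Assumption (A) invoked—precisely as you do—at the single point where one must identify $Av_2(t)=e^{tA}f(t)-f(t)$ via closedness of $A$ and the limit $e^{\ep A}f(t)\to f(t)$. The only step to execute with care is the final difference quotient: the naive identity $h^{-1}(e^{hA}-I)v(t)\to Av(t)$ is not automatic without strong continuity at $0$ (one must first check that $v(t)$ and $Av(t)$ lie in the closed subspace on which the semigroup is strongly continuous at $t=0$), so it is cleaner to differentiate the truncated integral $\int_0^{t-\ep}e^{(t-s)A}f(s)\,ds$, rewrite its derivative with your H\"older split, and let $\ep\searrow 0$, whereupon the two occurrences of $e^{\ep A}f(t)$ cancel and the limit $Av(t)+f(t)$ is obtained uniformly on compact subsets of $(0,\infty)$.
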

\begin{remark}\rm
We note that we need a restriction only on the external force $f$ not on initial data $a$. Moreover, Lemma \ref{thm;abstract evo} gives
no information on
the verification of the initial condition.
While, property of the adjoint operator $A^*$ and
the dual space $X^*$ has the possibility to 
verify of the initial condition with a suitable sense.
\end{remark}
\section{Global mild solution}
%
In this section, we give a proof of Theorem \ref{thm;global} except for (iv) and (vii).
For the remining part, see Section \ref{sec;p2.1}.

We firstly consider the case $n\geq 4$.
Define a successive appoximation $\{u_m\}_{m=1}^\infty$ by
\begin{equation}\label{eq;4dim}
\left\{\begin{split}
u_0(t)
&=e^{t\Delta}a+\int_{0}^t e^{(t-s)\Delta} \PP f(s)\,ds
=e^{t\Delta}a +
\int_0^{\infty} e^{s\Delta} \PP \widetilde{f}(s)\,ds,
\\
u_{m+1}(t)
&=u_0(t)+ G[u_m,u_m](t), \quad m=0,1,2,\dots,
\end{split}\right.
\end{equation}
for $t>0$, where $\widetilde{f}(s)=f(t-s)\chi_{[0,t]}(s)$ and 
\begin{equation}\label{eq;nonlinear4dim}
G[u,v](t)
=-\int_{0}^t 
\nabla \cdot e^{(t-s)\Delta}
\PP[u\otimes v](s)\,ds
=-\int_{0}^{\infty} \nabla \cdot e^{s\Delta} 
\PP[\widetilde{u}\otimes\widetilde{v}](s)\,ds.
\end{equation}
For the continuity of $\{u_m\}$, 
see Lemma \ref{lem;contiduhamel}.
Put 
$\mathcal{K}_m=\sup\limits_{0<t<\infty}\|u_m(t)\|_{n,\infty}$ for $m=0,1,2,\dots$. Then by Lemma \ref{modi} with $p=\frac{n}{3}$ and $q=n$,
we have
\begin{equation*}
\|u_0(t)\|_{n,\infty}
\leq C\|a\|_{n,\infty}+A_{\frac{n}{3}}
\sup_{0<s<\infty}\|\PP f(s)\|_{\frac{n}{3},\infty}
<\infty
\end{equation*}
for all $t>0$. 
Hence 
$\mathcal{K}_0\leq C\bigl(\|a\|_{n,\infty}
+\sup\limits_{0<s<\infty}\|\PP f(s)\|_{\frac{n}{3},\infty}\bigr)$. 
Moreover, by Lemma \ref{Meyer} with $p=\frac{n}{2}$ and $p=n$, we see that
\begin{equation*}
\begin{split}
\|G[u_m,u_m](t)\|_{n,\infty}
&\leq
B_{\frac{n}{2}} \sup_{0<s<\infty}\|\PP[\widetilde{u}_m\otimes \widetilde{u}_m](s)\|_{\frac{n}{2},\infty}
\\
&\leq 
C_1 \sup_{0<s<\infty}\|u_m(s)\|_{n,\infty}^2.
\end{split}
\end{equation*}
for all $t>0$. Hence we have
\begin{equation*}
\mathcal{K}_{m+1}\leq 
\mathcal{K}_0+ C_1\mathcal{K}_m^2,
\qquad m=0,1,2,\dots.
\end{equation*}
So if 
\begin{equation}\label{eq;small k}
\mathcal{K}_0<
\frac{1}{4C_1},
\end{equation}
then we obtain a uniform bound of 
$\{\mathcal{K}_m\}_{m=1}^\infty$ as
\begin{equation*}
\mathcal{K}_m \leq \frac{1-\sqrt{1-4C_1\mathcal{K}_0}}{2C_1}\leq 2\mathcal{K}_0,
\qquad m=0,1,2,\dots.
\end{equation*}
This yields a limit 
$u\in BC\bigl((0,\infty)\,;\, L_{\sigma}^{n,\infty}(\R^n)\bigr)$ with 
$\sup\limits_{0<t<\infty}\|u(t)\|_{n,\infty}
\leq 2\mathcal{K}_0$.
Then $u$ is a desired solution of (IE$^{*})$,
i.e., a weak mild solution of (N-S). 
To obtain a weak mild solution,
it suffices that 
$\| a \|_{n,\infty}+
\sup\limits_{0<t<\infty}
\|\PP f(t)\|_{\frac{n}{3},\infty}$ is small enough 
so that \eqref{eq;small k} holds.
This proves (i) of Theorem \ref{thm;global}.

Next, we additionally assume 
$\nabla a \in L^{\frac{n}{2},\infty}(\R^n)$ and modify $G[\cdot,\cdot]$ as 
\begin{equation*}
G^*[u,v](t)=
-\int_0^t e^{(t-s)\Delta}
\PP [u\cdot\nabla v](s)\,ds
=-\int_0^\infty
 e^{s\Delta}
\PP [\widetilde{u}\cdot\nabla\widetilde{v}](s)\,ds,
\quad t>0.
\end{equation*}
Here, we note that if 
$u,v\in L_{\sigma}^{n,\infty}(\R^n)\cap L^r(\R^n)$
with some $r>n$ and 
$\nabla u,\nabla v\in L^{q,\infty}(\R^n)$ with some $q \geq \frac{n}{2}$
, then we easily see that 
$\nabla \cdot e^{s\Delta}\PP [u\otimes v]
= e^{s\Delta}\PP [u\cdot\nabla v]$ 
for almost every $x \in \R^n$. 
Hence, if a mild solution 
$u$ has $L^r$-regularity for some $r>n$, 
we see that $u$ corresponds 
with a weak mild solution obtained above.

Consider the following successive approximation 
$\{u_m\}_{m=1}^\infty$ by
\begin{equation}\label{eq;modimodi}
\left\{\begin{split}
u_0(t)
&=e^{t\Delta}a+\int_{0}^t e^{(t-s)\Delta} \PP f(s)\,ds
=e^{t\Delta}a +
\int_0^{\infty} e^{s\Delta} \PP \widetilde{f}(s)\,ds,
\\
u_{m+1}(t)
&=u_0(t)+ G^*[u_m,u_m](t), \quad m=0,1,2,\dots.
\end{split}\right.
\end{equation}
Put 
$\mathcal{K}^*_m=\max\bigl\{
\sup\limits_{0<t<\infty}\|u_m(t)\|_{n,\infty},
\sup\limits_{0<t<\infty}
\|\nabla u_m(t)\|_{\frac{n}{2},\infty}
\bigr\}$ for $m=0,1,2,\dots$.
Note that $\|a\|_{n,\infty}\leq C\|\nabla u\|_{\frac{n}{2},\infty}$ by the Sobolev inequality. 
By Lemma \ref{modi} and Lemma \ref{Meyer}, we have
\begin{equation*}
\begin{split}
\|u_0(t)\|_{n,\infty} 
&\leq C\|\nabla a \|_{\frac{n}{2},\infty}
+
A_{\frac{n}{3}}\sup_{0<s<\infty}\|\PP f(s)\|_{\frac{n}{3},\infty},
\\
\|\nabla u_0(t)\|_{\frac{n}{2},\infty}
&\leq
C\|\nabla a\|_{\frac{n}{2},\infty} 
+ B_{\frac{n}{3}}
\sup_{0<s<\infty} 
\|\PP f(s)\|_{\frac{n}{3},\infty}.
\end{split}
\end{equation*}
Hence $\mathcal{K}_0^*\leq C\bigl(
\|\nabla a\|_{\frac{n}{2},\infty}
+
\sup\limits_{0<t<\infty}
\|\PP f(t)\|_{\frac{n}{3},\infty}
\bigr)$. 
Moreover, we have
\begin{equation*}
\begin{split}
\|G^*[u_m,u_m](t)\|_{n,\infty}
&\leq
A_{\frac{n}{3}}
\sup_{0<s<\infty}\|\PP [\widetilde{u}_m
\cdot\nabla \widetilde{u}_m](s)\|_{\frac{n}{3},\infty} 
\\
&\leq
CA_{\frac{n}{3}}
\sup_{0<s<\infty}\|u_m(s)\|_{n,\infty}
\sup_{0<s<\infty}\|\nabla u_m(s)\|_{\frac{n}{2},\infty},
\end{split}
\end{equation*}
and
\begin{equation*}
\begin{split}
\|\nabla G^*[u_m,u_m](t)\|_{\frac{n}{2},\infty}
&\leq
B_{\frac{n}{3}}
\sup_{0<s<\infty}\|\PP [
\widetilde{u}_m\cdot\nabla 
\widetilde{u}_m](s)\|_{\frac{n}{3},\infty} 
\\
&\leq
CB_{\frac{n}{3}}
\sup_{0<s<\infty}\|u_m(s)\|_{n,\infty}
\sup_{0<s<\infty}\|\nabla u_m(s)\|_{\frac{n}{2},\infty}
\end{split}
\end{equation*}
for all $t>0$.
Hence we obtain
\begin{equation*}
\mathcal{K}_{m+1}^* \leq \mathcal{K}_0^*
+ C_2 (\mathcal{K}_m^*)^2, \qquad m=0,1,2,\dots.
\end{equation*}
Therefore if 
\begin{equation}\label{eq;small mild}
\mathcal{K}_0^*<\frac{1}{4C_2},
\end{equation}
Then similarly we obtain a uniform bound 
$\mathcal{K}_m^*\leq 2 \mathcal{K}_0^*$ for 
$m=0,1,2,\dots$ and hence obtain
a limit $u \in BC\bigl((0,\infty)\,;\,
L_{\sigma}^{n,\infty}(\R^n)\bigr)$ with
$\nabla u \in BC\bigl((0,\infty)\,;\,L^{\frac{n}{2},\infty}(\R^n)\bigr)$, 
which is a desired mild solution of (N-S).
To obtain a mild solution, 
it is suffices that 
$\|\nabla a\|_{\frac{n}{2},\infty}+
\sup\limits_{0<t<\infty}
\|\PP f(t)\|_{\frac{n}{3},\infty}$ 
is small enough so that
\eqref{eq;small mild} holds. 
This proves (ii) of Theorem \ref{thm;global}.

Here for $\frac{n}{3}<p<\frac{n}{2}$, 
we also assume
$\nabla a \in L^{q,\infty}$ with 
$\frac{1}{q}=\frac{1}{p}-\frac{1}{n}$ and
assume $f\in BC\bigl([0,\infty)\,;\,
L^{p,\infty}(\R^n)\bigr)$.
We consider the successive approximation \eqref{eq;modimodi} again. 

Let $\frac{1}{r}
=\frac{1}{q}-\frac{1}{n}=\frac{1}{p}-\frac{2}{n}$.  
Put 
$\mathcal{L}_m
=\sup\limits_{0<t<\infty}{\|u_m(t)\|_{r,\infty}}$ for $m=0,1,2,\dots$.
Then we note that 
$\|a \|_{r,\infty}\leq C\|\nabla a\|_{q,\infty}$
by the Sobolev inequality.
Then by Lemma \ref{modi} we have
\begin{equation*}
\|u_0(t)\|_{r,\infty} \leq
C\|a\|_{r,\infty} + 
A_p \sup_{0<s<\infty} \|\PP f(s)\|_{p,\infty},
\end{equation*}
for all $t>0$.
Hence, 
$\mathcal{L}_0\leq 
C\bigl(\|a\|_{r,\infty}
+ \sup\limits_{0<t<\infty}
\| \PP f(t)\|_{p,\infty}\bigr)$.
By Lemma \ref{modi}, we have
\begin{equation*}
\begin{split}
\|G^*[u_m,u_m](t)\|_{r,\infty}
&\leq A_p \sup_{0<s<\infty}\|\PP [
\widetilde{u}_m\cdot\nabla 
\widetilde{u}_m](s)\|_{p,\infty}
\\
&\leq
CA_p \sup_{0<s<\infty}
\| u_m(s)\|_{r,\infty}
\sup_{0<s<\infty} 
\|\nabla u_m(s)\|_{\frac{n}{2},\infty},
\end{split}
\end{equation*}
for all $t>0$. Hence we have a linear recurrence 
\begin{equation*}
\mathcal{L}_{m+1} \leq \mathcal{L}_0 + C_3 \mathcal{K}_0^*\mathcal{L}_m,
\qquad m=0,1,2,\dots.
\end{equation*}
Therefore, if
\begin{equation}\label{eq;small l}
\mathcal{K}_0^* < \frac{1}{C_3},
\end{equation}
then we obtain a uniform bound 
\begin{equation*}
\mathcal{L}_m \leq \frac{\mathcal{L}_0}{1-C_3\mathcal{K}_0^*}=:\mathcal{L}_*,
\qquad m=0,1,2,\dots,
\end{equation*}
which yields a limit 
$u \in BC\bigl((0,\infty)\,;
\,L_{\sigma}^{r,\infty}(\R^n)\bigr)$ with 
$\sup\limits_{0<t<\infty}
\|u(t)\|_{r,\infty}\leq \mathcal{L}_*$.

Similarly, put 
$\mathcal{M}_m=\sup\limits_{0<t<\infty}
\|\nabla u(t)\|_{q,\infty}$ for $m=0,1,2,\dots$.
By Lemma \ref{Meyer}, we see that
\begin{equation*}
\| \nabla u_0(t)\|_{q,\infty}
\leq
C\|\nabla a\|_{q,\infty}+B_p\sup_{0<s<\infty}
\|\PP f(s)\|_{p,\infty},
\end{equation*}
for all $t>0$. Hence, 
$\mathcal{M}_0\leq C\bigl(
\|\nabla a\|_{q,\infty}+
\sup\limits_{0<t<\infty}
\|\PP f(t)\|_{q,\infty}\bigr)$.
By Lemma \ref{Meyer}, we also have
\begin{equation*}
\begin{split}
\|\nabla G^*[u_m,u_m](t)\|_{q,\infty}
&\leq B_p \sup_{0<s<\infty}\|\PP[
\widetilde{u}_m\cdot\nabla 
\widetilde{u}_m](s)\|_{p,\infty}
\\
&\leq CB_p \sup_{0<s<\infty}\|u_m(s)\|_{n,\infty}
\sup_{0<s<\infty} \|\nabla u_m(s)\|_{q,\infty},
\end{split}
\end{equation*}
for all $t>0$. Hence we obtain
\begin{equation*}
\mathcal{M}_{m+1}\leq \mathcal{M}_0+
C_4\mathcal{K}_0^* \mathcal{M}_m,
\qquad m=0,1,2,\dots.
\end{equation*}
Therefore, if 
\begin{equation}\label{eq;small m}
\mathcal{K}_0^* <\frac{1}{C_4},
\end{equation}
then we obtain a uniform bound
\begin{equation*}
\mathcal{M}_m \leq 
\frac{\mathcal{M}_0}{1-C_4\mathcal{K}^*_0}=:\mathcal{M}_*,
\qquad m=0,1,2,\dots,
\end{equation*}
which yields a limit 
$\nabla u \in BC\bigl((0,\infty)\,;\,
L^{q,\infty}(\R^n)\bigr)$ with 
$\sup\limits_{0<t<\infty}
\|\nabla u (t)\|_{q,\infty} \leq \mathcal{M}_*$.
As conclusion of the proof of (iii) 
of Theorem \ref{thm;global},
the smallness condition for
$\|\nabla a\|_{\frac{n}{2},\infty}
+\sup\limits_{0<t<\infty}
\|\PP f(t)\|_{\frac{n}{3},\infty}$
is required so that \eqref{eq;small mild},
\eqref{eq;small l} and \eqref{eq;small m} hold.
Moreover, since $G^*[u_m,u_m](t)=G[u_m,u_m](t)$ 
for almost every $x\in \R^n$ provided 
$u_m(t)\in L_{\sigma}^{n,\infty}(\R^n)
\cap L^{r,\infty}(\R^n)$ for some $r>n$ and
$\nabla u_m(t)\in L^{q,\infty}(\R^n)$ for some
$q\geq \frac{n}{2}$, 
the mild solution
$u$ above is also a weak mild solution of (N-S).

Finally, we consider the case $n=3$.
In this situation, $\frac{n}{3}=1$. 
Due to the end point case in 
Lemmas \ref{modi} and \ref{Meyer}, 
we restrict the class of external forces
within $BC\bigl([0,\infty)\,;\,
L^{1}(\R^3)\bigr)$.
Then by the almost same procedure of the proof of (i) of Theorem \ref{thm;global} above,
we obtain a (modified) weak mild solution
of (N-S), i.e., a solution of (IE$^{**}$)
by the smallness of 
$\|a\|_{3,\infty}+
\sup\limits_{0<t<\infty}\|f(t)\|_{1}$,
 see Remark \ref{rem;weakmildsol}.
On the other hand, we unable to handle
$\sup\limits_{0<t<\infty}
\|\PP[u_m\cdot\nabla u_m](t)\|_{1}$ 
with our approach.
Hence our approach does not allow us to construct
a mild solution of (N-S), 
assuming only critical regularity for 
$\nabla a \in L^{\frac{3}{2},\infty}(\R^3)$ 
and $f\in BC\bigl([0,\infty)\,;\,L^1(\R^3)\bigr)$.

For $1<p<\frac{3}{2}$,
 we assume additional regularities for 
$\nabla a \in L^{\frac{3}{2},\infty}(\R^3)\cap
L^{q,\infty}(\R^3)$, 
$\frac{1}{q}=\frac{1}{p}-\frac{1}{3}$ and
$f\in BC\bigl([0,\infty)\,;\,
L^1(\R^3)\cap L^{p,\infty}(\R^3)\bigr)$.
Then noting that $G^{*}[u_m,u_m](t)=G[u_m,u_m](t)$
for almost every $x\in \R^3$, we are able to
construct a mild solution of (N-S),
with $\{\mathcal{K}_m\}$, $\{\mathcal{L}_m\}$
and $\{\mathcal{M}_m\}$ provided 
$\|\nabla a\|_{\frac{3}{2},\infty}
+\sup\limits_{0<t<\infty}\| \PP f(t)\|_1$ 
is small enough.
This yields the (v) and (iv) 
of Theorem \ref{thm;global}.
\section{Proof of Theorem \ref{thm;maximal}}
In this section, 
we prove Theorem \ref{thm;maximal}.
Let $f \in D(-\Delta)$.
Take $\ep>0$ and $\phi \in C_{0,\sigma}^\infty(\R^n)$.
Since $e^{t\Delta}f \rightharpoonup f$ weakly $*$
in $L^{n,\infty}_{\sigma}(\R^n)$ 
as $t\searrow 0$, 
for $\delta>0$ there exists $0< \eta=\eta(\ep,\delta,f,\phi)<\ep$ such that
\begin{equation*}
\bigl|(e^{\eta\Delta}f-f,\phi)\bigr|<\delta.
\end{equation*}
Then we see that
\begin{equation*}
\begin{split}
\bigl|(e^{\ep\Delta}f-f,\phi)\bigr|
&\leq
\bigl|(e^{\ep\Delta}f-e^{\eta\Delta}f,\phi)\bigr|+
\bigl|(e^{\eta\Delta}f-f,\phi)\bigr|\\
&\leq
\left(\int_\eta^{\ep}\Delta e^{\theta\Delta}f\,d\theta, \phi\right)+\delta
\\
&\leq
C\int_\eta^{\ep} \| (-\Delta)f\|_{n,\infty}
\|\phi\|_{\frac{n}{n-1},1}\,d\theta +\delta
\\
&\leq C(\ep-\eta)\|(-\Delta)f\|_{n,\infty}
\|\phi\|_{\frac{n}{n-1},1}+\delta
\\
&\leq
C\ep \|(-\Delta)f\|_{n,\infty} 
\|\phi\|_{\frac{n}{n-1},1}+\delta,
\end{split}
\end{equation*}
where $C=C\sup\limits_{0<\theta<\infty}
\|e^{\theta\Delta}\|_{\mathcal{L}(L^{n,\infty})}$
is independent of $\ep, \delta, \eta,f, \phi$.
Since $\delta>0$ is arbitrary, we obtain
\begin{equation*}
\bigl|(e^{\ep\Delta}f-f,\phi)\bigr|
\leq C\ep \|(-\Delta)f\|_{n,\infty}
\|\phi\|_{\frac{n}{n-1},1},
\end{equation*}
which implies $\|e^{\ep\Delta}f-f\|_{n,\infty}\to 0$ as $\ep \searrow 0$. By the density, 
it holds that $f \in \overline{D(-\Delta)}^{\|\cdot\|_{n,\infty}}$ satisfies (A).
The opposite is trivial. This completes the proof of Theorem \ref{thm;maximal}.
\section{Local strong solution}
In this section,
we construct a local in time 
weak mild and a mild solution of (N-S), 
according to Fujita and Kato 
\cite{Fujita Kato ARMA1964},
Kato \cite{Kato MZ1984},
Giga and Miyakawa \cite{Giga Miyakawa ARMA1985},
Kozono and Yamazaki \cite{Kozono Yamazaki MZ1998}.
Firstly, we construct local in time weak mild solution of
(N-S) in $X_{\sigma}^{n,\infty}$ and give 
a uniqueness criterion for the
weak mild solutions of (N-S) in the class 
$BC\bigl([0,T)\,;\,L_{\sigma}^{n,\infty}(\R^n)\bigr)$ with specific data which has less singularity.
Next we show the existence of local in time
mild solutions of (N-S), and derive its regularity.
Then we observe the constructed mild solution of (N-S) becomes a strong solution of (N-S).
Finally, we refer to the uniqueness criterion
introduced by Kozono and Yamazaki \cite{Kozono Yamazaki MZ1998}.
\subsection{Existence of local weak mild solutions; proof of (i) and (ii) in 
Theorem \ref{thm;localX}}
In this subsection,
we give the proof of (i) and (ii)
in Theorem \ref{thm;localX}. (For the proof
of (iii) of Theorem \ref{thm;localX} 
see Section \ref{sec;p2.3} below.) 
We firstly construct local
in time weak mild solutions of (N-S). 
However, since the force 
$f \in BC\bigl([0,\infty)\,;\,
\widetilde{L}^{\frac{n}{3},\infty}(\R^n)\bigr)$
for $n\geq 4$
or $f \in BC\bigl([0,\infty)\,;\,L^1(\R^3)\bigr)$
have just critical regularity, 
only $\|\cdot\|_{n,\infty}$ is available for the 
iteration method.
For this reason, we have to modify the usual 
Fujita-Kato approach.
Hereafter we assume 
$a\in X_{\sigma}^{n,\infty}=
\overline{D(-\Delta)}^{\|\cdot\|_{n,\infty}}$
and
$f \in BC\bigl([0,\infty)\,;\,
\widetilde{L}^{\frac{n}{3},\infty}(\R^n)\bigr)$
for the case $n\geq 4$, 
$f\in BC\bigl([0,\infty)\,;\,L^{1}(\R^3)\bigr)$.
For simplicity,
we introduce the notation $F(t):=\int_0^t \PP e^{(t-s)\Delta}f(s)\,ds$ and 
$G[u,v](t):=-\int_0^t \nabla \cdot e^{(t-s)\Delta}\PP [u\otimes v](s)\,ds$.

We reduce the following successive approximation:
\begin{equation*}
\left\{\begin{split}
&u_0(t)=e^{t\Delta}a +F(t), \\
&u_{n+1}(t)=u_0 +G[u_n,u_n](t), \quad n=0,1,2,\dots,
\end{split}\right.
\end{equation*}
to
\begin{equation*}
\left\{\begin{split}
&w_0(t)=e^{t\Delta}a -a+F(t), \\
&w_{n+1}(t)=w_0(t) +G[a,a](t)
+G[a,w_n](t)+G[w_n,a](t)+G[w_n,w_n](t), \quad n=0,1,2,\dots,
\end{split}\right.
\end{equation*}
putting $w_n(t)=u_n(t)-a$.

We note that if $g\in X_{\sigma}^{n,\infty}$, 
then $g\otimes g\in 
\widetilde{L}^{\frac{n}{2},\infty}(\R^n)$.
Hence Lemma \ref{lem;tildeL} yields
$w_n\in BC\bigl([0,\infty)\,;\,X_{\sigma}^{n,\infty}\bigr)$ with 
\begin{equation*}
\lim_{t\to 0}\|w_0(t)+G[a,a](t)\|_{n,\infty}=0.
\end{equation*}
Moreover, since $a\in X_{\sigma}^{n,\infty}$
for sufficiently small $\ep>0$ 
we choose
 $a_\ep \in L^r(\R^n)$ with some $r>n$ so that
 $\|a-a_{\ep}\|_{n,\infty}<\ep$.
Then we see that
\begin{equation*}
\begin{split}
\|G[a,w_n](t)\|_{n,\infty}
&\leq \|G[a-a_{\ep},w_n](t)\|_{n,\infty}+
\|G[a_{\ep},w_n](t)\|_{n,\infty}
\\
&\leq
B_{\frac{n}{2}}
\sup_{0<s<t}\|(a-a_\ep)
\otimes w_n\|_{\frac{n}{2},\infty}
+
C\int_0^t (t-s)^{-\frac{n}{2r}-\frac{1}{2}}
\|a_{\ep}\|_r \|w_n(s)\|_{n,\infty}\,ds
\\
&\leq
\bigl( CB_{\frac{n}{2}}\ep
+Ct^{\frac{1}{2}-\frac{n}{2r}}\|a_{\ep}\|_{r}\bigr)
\sup_{0<s<t}\|w_n(s)\|_{n,\infty}
\end{split}
\end{equation*}
Hence, with sufficient small $\ep>0$ and
 $T^\prime=T^\prime(a,\ep)>0$ we obtain
\begin{equation}\label{eq;ga1}
\sup_{0<s<t}\|G[a,w_n](s)\|_{n,\infty}+
\sup_{0<s<t}\|G[w_n,a](s)\|_{n,\infty}
<\frac{1}{2}\sup_{0<s<t}\|w_n(s)\|_{n,\infty}
\quad\text{for } 0<t<T^\prime.
\end{equation}
Then for $0<t<T^\prime$ we have
\begin{equation*}
\sup_{0<s<t}\|w_{n+1}(s)\|_{n,\infty}
\leq 
\sup_{0<s<t}\|w_{0}(s)+G[a,a](s)\|_{n,\infty}
+\frac{1}{2}\sup_{0<s<t}\|w_n(s)\|_{n,\infty}
+C_5 \left(\sup_{0<s<t}\|w_n(s)\|_{n,\infty}\right)^2.
\end{equation*}
So it suffices to take $T>0$ so that
\begin{equation*}
\sup_{0<s<T}\|w_0(s)+G[a,a](s)\|_{n,\infty}
\leq \frac{1}{16C_5}
\end{equation*}
in order to obtain a limit 
$u\in BC\bigl([0,T)\,;
\,X_{\sigma}^{n,\infty}\bigr)$
which is a desired weak mild solution of (N-S).

Finally, we note that 
Remark \ref{rem;DuhamelLocal} implies
the existence of weak mild solution of (N-S)
for $f \in BC\bigl([0,\infty)\,;\,
L^{p,\infty}(\R^n)\bigr)$ with some 
$\frac{n}{3}<p\leq n$ by the same manner above.
This completes the proof of (i) and (ii) in Theorem \ref{thm;localX}.
\subsection{Uniqueness of weak mild solutions;
proof of Theorem \ref{thm;local weak uniqueness}}
\label{subsec;UWMS}
In this subsection, we give a proof of 
Theorem \ref{thm;local weak uniqueness}.
We assume $a\in X_{\sigma}^{n,\infty}$
and $f\in BC\bigl([0,T)\,;\,
\widetilde{L}^{p,\infty}(\R^n)\bigr)$ for $n\geq 4$ and $f\in BC\bigl([0,T)\,;\,L^{1}(\R^3)\bigr)$.
Let $u,v \in BC\bigl([0,T)\,;\,
L_{\sigma}^{n,\infty}(\R^n)\bigr)$ be associated weak mild solutions of (N-S).
Moreover, let $w\in
BC\bigl([0,\tau_0]\,;\,X_{\sigma}^{n,\infty}\bigr)$ be a local weak mild solution
of (N-S) by Theorem \ref{thm;localX} with some $0<\tau_0<T$. 
We recall the notation $G[u,v](t)
=-\int_0^t\nabla\cdot e^{(t-s)\Delta}\PP[u\otimes v](s)\,ds$.
Then putting $U(t):=u(t)-w(t)$, we have
by Lemma \ref{Meyer} that
\begin{equation*}
\begin{split}
\|U(t)\|_{n,\infty}
\leq &
\|G[u-a,U](t)\|_{n,\infty}+
\|G[a,U](t)\|_{n,\infty}+
\|G[U,w-a](t)\|_{n,\infty}+
\|G[U,a](t)\|_{n,\infty}
\\
\leq &
CB_{\frac{n}{2}}
\bigl(\sup_{0<s<t}\|u(s)-a\|_{n,\infty}
+
\sup_{0<s<t}\|w(s)-a\|_{n,\infty}\bigr)
\sup_{0<s<t}\|U(s)\|_{n,\infty}
\\
&+\|G[a,U](t)\|_{n,\infty}+\|G[U,a](t)\|_{n,\infty}.
\end{split}
\end{equation*}
Since $a \in X_{\sigma}^{n,\infty}$, by the same argument as in \eqref{eq;ga1}, we can take 
sufficiently small $T^\prime=T^\prime(a)>0$ so that
\begin{equation*}
\sup_{0<s<t}\|G[a,U](s)\|_{n,\infty}+
\sup_{0<s<t}\|G[U,a](s)\|_{n,\infty}
< \frac{1}{4}\sup_{0<s<t}\|U(s)\|_{n,\infty},
\qquad\text{for } 0<t<T^\prime.
\end{equation*}
Moreover, $u$ and $w$ is continuous at $t=0$, 
there exists sufficiently small 
$T^{\prime\prime}>0$ such that
\begin{equation*}
\sup_{0<s<t}\|u(s)-a\|_{n,\infty}
+
\sup_{0<s<t}\|w(s)-a\|_{n,\infty}
<\frac{1}{4CB_{\frac{n}{2}}}
\qquad \text{for } 0<t<T^{\prime\prime}.
\end{equation*}
Therefore, we see 
\begin{equation*}
\sup_{0<s<t}\|U(s)\|_{n,\infty} \leq 
\frac{1}{2}
\sup_{0<s<t}\|U(s)\|_{n,\infty}, 
\qquad \text{for }
0<t<\min\{T^\prime,\,T^{\prime\prime}\}.
\end{equation*}
Hence we concede $u\equiv w\equiv v$ on $[0,\tau]$ with sufficiently small $\tau>0$.

Next we set $T_{\max}:=\sup\{\tau\,;\,
u\equiv v \text{ on }[0,\tau],\,\, u(\tau)\in X_{\sigma}^{n,\infty}\}$. If $T_{\max}=T$ then there is nothing to prove. Let us assume 
$T_{\max}<T$. By the continuation, we have $u(T_{\max})\in X_{\sigma}^{n,\infty}$.
So by the above argument again, 
the construction of a local weak mild solution of (N-S)
with the initial data $u(T_{\max})$ yields 
that $u\equiv v$ on $[0,T_{\max}+\ep]$ and 
$u(T_{\max}+\ep)\in X_{\sigma}^{n,\infty}$
with some $\ep>0$,
which contradicts the definition of $T_{\max}$.
This completes the proof of Theorem \ref{thm;local weak uniqueness}.
\subsection{Existence of local mild solutions}

\begin{theorem}\label{thm;local}
Let $a \in X_\sigma^{n,\infty}$ and let
$f\in BC\bigl([0,\infty)\,;\,
L^{n,\infty}(\R^n)\bigr)$.
Then there exist $T>0$ and a mild solution $v$ 
on $[0,T)$ of (N-S) which satisfies
\begin{alignat}{1}
\label{eq;v Ln}
v &\in BC\bigl([0,T)\,;\,
L_{\sigma}^{n,\infty}(\R^n)\bigr), \\
\label{eq;v rho}
t^{\frac{1}{2}-\frac{n}{2\rho}}v 
& \in BC\bigl([0,T)\,;\,
L_{\sigma}^{\rho}(\R^n)\bigr) \quad\text{for }n<\rho\leq \infty,\\
\label{eq;v nabla Ln}
t^{\frac{1}{2}}\nabla v 
&\in BC\bigl([0,T)\,;\,
L^{n,\infty}(\R^n)\bigr), \\
\label{eq;nablav rho}
t^{1-\frac{n}{2\varrho}}\nabla v
&\in BC\bigl([0,T)\,;\, L^{\varrho}(\R^n)\bigr)
\quad \text{for } n < \varrho <\infty.
\end{alignat}
Moreover, if $a \in X_{\sigma}^{n,\infty}\cap L^r(\R^n)$ for some $r>0$,
then there exists $\eta=\eta(n,r)>0$ such that
the existence time $T>0$ is estimated as
\begin{equation*}
T\geq
\min
\left\{
1, \Bigl(\frac{\eta}{\|a\|_r
+\sup\limits_{0<t<\infty}
\|\PP f(t)\|_{n,\infty}}\Bigr)^{\frac{2r}{r-n}}
\right\}.
\end{equation*}
\end{theorem}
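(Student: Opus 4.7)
The plan is to run a Fujita--Kato contraction scheme in weighted spaces adapted to the $L^{n,\infty}$ framework. First I would set up the successive approximation
\begin{equation*}
v_0(t) = e^{t\Delta}a + \int_0^t e^{(t-s)\Delta}\PP f(s)\,ds, \qquad v_{m+1} = v_0 + G^*[v_m,v_m],
\end{equation*}
with $G^*[u,v](t) := -\int_0^t e^{(t-s)\Delta}\PP[u\cdot\nabla v](s)\,ds$, and work in the complete space with norm
\begin{equation*}
\|v\|_{X_T} := \sup_{0<t<T}\|v(t)\|_{n,\infty} + N_T(v), \qquad N_T(v) := \sup_{0<t<T} t^{\frac12-\frac{n}{2\rho}}\|v(t)\|_{\rho} + \sup_{0<t<T} t^{\frac12}\|\nabla v(t)\|_{n,\infty},
\end{equation*}
for a single fixed $\rho\in(n,\infty)$. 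The further regularities \eqref{eq;v rho}--\eqref{eq;nablav rho} will be recovered a posteriori from the Duhamel formula.

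Using Proposition \ref{prop;LpLq} (Lorentz version) combined with H\"{o}lder in Lorentz spaces, one obtains a bilinear estimate $\|G^*[u,v]\|_{X_T}\leq C_\sharp N_T(u)N_T(v)$ and an inhomogeneous estimate $\|F\|_{X_T}\leq C(T+T^{\frac12-\frac{n}{2\rho}})\sup_s\|f(s)\|_{n,\infty}$. Thus the contraction closes once $N_T(v_0)$ is made smaller than $1/(4C_\sharp)$; crucially, the $L^{n,\infty}$-piece of $\|v_0\|_{X_T}$, although only bounded (not small) in terms of $\|a\|_{n,\infty}$, does not obstruct the fixed-point argument because it appears only linearly on the right-hand side through $N_T$.

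The main obstacle, and where $a\in X_\sigma^{n,\infty}$ is indispensable, is producing the smallness
\begin{equation*}
\sup_{0<t<T}\Bigl(t^{\frac12-\frac{n}{2\rho}}\|e^{t\Delta}a\|_\rho + t^{\frac12}\|\nabla e^{t\Delta}a\|_{n,\infty}\Bigr)\to 0\quad\text{as }T\to 0.
\end{equation*}
For generic $a\in L^{n,\infty}$ these are merely bounded. But since $X_\sigma^{n,\infty}\subset\widetilde{L}_\sigma^{n,\infty}(\R^n)$, given any $\delta>0$ I decompose $a = a_\delta + r_\delta$ with $a_\delta\in L^{n,\infty}_\sigma\cap L^\infty$ and $\|r_\delta\|_{n,\infty}<\delta$. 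A direct distribution function computation shows $a_\delta\in L^\mu$ for every $\mu\in(n,\infty)$, so $\|e^{t\Delta}a_\delta\|_\rho$ is uniformly bounded and $t^{\frac12}\|\nabla e^{t\Delta}a_\delta\|_{n,\infty}\leq Ct^{\frac12-\frac{n}{2\rho}}\|a_\delta\|_\rho$, whence both weighted $a_\delta$-contributions vanish with a positive power of $T$; Proposition \ref{prop;LpLq} bounds the $r_\delta$ part by $C\delta$ uniformly in $t$. Choosing $\delta$, then $T$, small yields $N_T(v_0)<1/(4C_\sharp)$ and delivers the fixed point $v\in X_T$.

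Finally, continuity $v(t)\to a$ in $L^{n,\infty}$ at $t=0$ follows from Theorem \ref{thm;maximal}, $\|F(t)\|_{n,\infty}\to 0$, and $\|G^*[v,v](t)\|_{n,\infty}=o(1)$; the full regularity \eqref{eq;v rho}--\eqref{eq;nablav rho} for general admissible $\rho,\varrho$ is recovered by bootstrapping the Duhamel formula with Proposition \ref{prop;LpLq}. For the quantitative lower bound on $T$ when additionally $a\in L^r(\R^n)$ with $r>n$, no approximation is needed: $\|e^{t\Delta}a\|_\rho\leq Ct^{-\frac n2(\frac1r-\frac1\rho)}\|a\|_r$ and the analogous gradient bound give $N_T(e^{t\Delta}a)\leq C T^{(r-n)/(2r)}\|a\|_r$. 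Imposing this plus the force contribution below the contraction threshold $\eta = \eta(n,r)$ produces the stated $T\geq\min\bigl\{1,(\eta/(\|a\|_r+\sup_s\|\PP f(s)\|_{n,\infty}))^{2r/(r-n)}\bigr\}$.
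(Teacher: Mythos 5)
Your overall strategy coincides with the paper's (same iteration, smallness extracted from an auxiliary norm via $a\in X_{\sigma}^{n,\infty}$, higher regularity by bootstrap, same quantitative bound), but there is a genuine gap at exactly the step you flag as the crux. Because you lump the weighted gradient norm $\sup_{0<t<T}t^{1/2}\|\nabla v(t)\|_{n,\infty}$ into the single quantity $N_T$ that must be driven below $1/(4C_\sharp)$, your scheme requires $\sup_{0<t<T}t^{1/2}\|\nabla e^{t\Delta}a\|_{n,\infty}$ to become small as $T\to 0$, and your proof of this rests on the inequality $\|\nabla e^{t\Delta}a_\delta\|_{n,\infty}\leq Ct^{-\frac{n}{2\rho}}\|a_\delta\|_{\rho}$ with $\rho>n$. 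That is not a legitimate heat-semigroup estimate: Proposition \ref{prop;LpLq} only allows the integrability exponent to increase, and $\nabla e^{t\Delta}$ does \emph{not} map $L^{\rho}(\R^n)$ into $L^{n,\infty}(\R^n)$ for $\rho>n$ (take $g=\sum_k c_k\phi(\cdot-x_k)$ with widely separated bumps and $(c_k)\in\ell^{\rho}\setminus\ell^{n,\infty}$; then $\|\nabla e^{t\Delta}g\|_{n,\infty}\gtrsim\|(c_k)\|_{\ell^{n,\infty}}$ cannot be controlled by $\|g\|_{\rho}$). Passing through $\widetilde{L}^{n,\infty}_{\sigma}$ therefore does not deliver the smallness of the gradient piece; indeed it is not even clear that $t^{1/2}\|\nabla e^{t\Delta}g\|_{n,\infty}\to 0$ for every $g\in L^{n,\infty}\cap L^{\infty}$, so this is not a cosmetic slip but a missing ingredient. (The same invalid downward estimate reappears in your closing bound $N_T(e^{t\Delta}a)\leq CT^{(r-n)/(2r)}\|a\|_r$.)

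There are two ways out. One is to approximate in $D(-\Delta)$ rather than in $L^{n,\infty}\cap L^{\infty}$: for $a_\ep\in D(-\Delta)$ one has $\nabla a_\ep\in L^{n,\infty}(\R^n)$, hence $t^{1/2}\|\nabla e^{t\Delta}a_\ep\|_{n,\infty}\leq Ct^{1/2}\|\nabla a_\ep\|_{n,\infty}\to 0$, while the remainder contributes only $C\|a-a_\ep\|_{n,\infty}$. The cleaner fix --- and the one the paper uses --- is to decouple the norms so that the gradient piece never needs to be small: run the quadratic recurrence only for $K_m=\sup_t t^{\frac12-\frac{n}{2r}}\|v_m(t)\|_r$, and note that since $\|v\cdot\nabla v\|_{q,\infty}\leq C\|v\|_r\|\nabla v\|_{n,\infty}$ pairs one $K$-factor with one gradient factor, both $L_m=\sup_t\|v_m(t)\|_{n,\infty}$ and $M_m=\sup_t t^{1/2}\|\nabla v_m(t)\|_{n,\infty}$ obey \emph{linear} recurrences $L_{m+1}\leq L_0+CK_0L_m$ and $M_{m+1}\leq M_0+CK_0M_m$. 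Then only $K_0(T)\to 0$ is required, which follows from the legitimate upward estimate $\|e^{t\Delta}(a-a_\ep)\|_r\leq Ct^{-(\frac12-\frac{n}{2r})}\|a-a_\ep\|_{n,\infty}$ together with density of $D(-\Delta)\subset L^{n,\infty}_{\sigma}\cap L^r$ in $X_{\sigma}^{n,\infty}$; the quantitative lower bound for $T$ then also follows from $K_0(T)\leq C\|a\|_rT^{\frac12-\frac{n}{2r}}+CT\sup_t\|\PP f(t)\|_{n,\infty}$ alone.
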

\begin{remark}\label{rem;local}\rm
(i) Recalling the argument in previous subsections, 
we note that the mild solution $v$ 
obtained by Theorem \ref{thm;local} in the
class $v \in BC\bigl([0,T)\,;\,X_\sigma^{n,\infty}\bigr)$.

(ii) If $a \in L^{n,\infty}_{\sigma}(\R^n)\cap L^{r}(\R^n)$, instead of $a \in X_{\sigma}^{n,\infty}$ or $X_{\sigma}^{n,\infty}\cap L^r(\R^n)$,
then we obtain a mild solution $v$ satisfies
\eqref{eq;v Ln}, \eqref{eq;v rho}, \eqref{eq;v nabla Ln} and \eqref{eq;nablav rho}, except for $t=0$ and satisfies 
$v(t)\rightharpoonup a$ weakly $*$ in
$L^{n,\infty}_{\sigma}(\R^n)$ as $t\searrow 0$.
\end{remark}

\begin{proof}
We construct a local mild solution of (N-S) with the following successive approximation,
\begin{equation}\label{eq;iteration}
\left\{\begin{split}
& v_0(t)=e^{t\Delta} a + \int_0^t e^{(t-s)\Delta}\PP f(s)\,ds,\\
&v_{m+1}(t)=v_0(t)+G^*[v_m,v_m](t),
\end{split}\right.
\end{equation}
where $G^*[u,v](t)=-\int_0^t 
e^{(t-s)\Delta}\PP [u\cdot\nabla v](s)\,ds$.

Let $n<r<\infty$ and put
\begin{equation}
K_m=K_m(T)=\sup_{0<t<T}t^{\frac{1}{2}-\frac{n}{2r}}\|v_m(t)\|_r, \qquad j=0,1,2,\dots.
\end{equation}
Here we abbreviate $K_m(T)$ to $K_m$ if 
$0<T\leq \infty$ is obvious from the context.
We firstly derive the uniform bound for $\{K_m\}_{m=0}^\infty$. By $L^p$-$L^q$ estimate of the
Stokes semigroup, we see that
$\|e^{t\Delta}a\|_r\leq Ct^{-\frac{1}{2}+\frac{n}{2r}}\|a\|_{n,\infty}$ and
\begin{equation*}
\begin{split}
\left\| \int_0^t e^{(t-s)\Delta}\PP f(s)\,ds\right\|_r 
&\leq
C\int_0^t (t-s)^{-\frac{1}{2}+\frac{n}{2r}}
\sup_{0<s<\infty}\| \PP f(s)\|_{n,\infty}\,ds
\\
&\leq
C\sup_{0<s<\infty}\| \PP f(s)\|_{n,\infty}
t^{\frac{1}{2}+\frac{n}{2r}},
\end{split}
\end{equation*}
For all $t>0$. Hence we obtain that
\begin{equation*}
K_0(T)\leq C\|a\|_{n,\infty} + 
C\sup_{0<s<\infty}\| \PP f(s)\|_{n,\infty}T<\infty.
\end{equation*}
Next for every 
$\phi \in C_{0,\sigma}^\infty(\R^n)$
we have by integral by parts,
\begin{equation*}
\begin{split}
\bigl|\bigl(G^*[v_m,v_m](t),\phi\bigr)\bigr|
&\leq 
\int_0^t \bigl|\bigl(v_m(s)\cdot\nabla e^{(t-s)\Delta}\phi,v_m(s)\bigr)\bigr|\,ds
\\
&\leq
\int_0^t \|v_m(s)\|_r^2\|\nabla e^{(t-s)\Delta}\phi\|_{\frac{r}{r-2}}\,ds
\\
&\leq C\int_0^t
(t-s)^{-\frac{n}{2r}-\frac{1}{2}}\|v_m(s)\|_r\|\phi\|_{\frac{r}{r-1}}\,ds\\
&\leq
C_5K_m^2\|\phi\|_{\frac{r}{r-1}}t^{-\frac{1}{2}+\frac{n}{2r}},
\end{split}
\end{equation*}
for all $0<t<T$, where $C_6$ is independent of $t$ and $\phi$.
Then by the duality we obtain that
\begin{equation*}
\sup_{0<t<T} t^{\frac{1}{2}-\frac{n}{2r}}
\| G^*[v_m,v_m](t)\|_{r} \leq C_6 K_m, 
\end{equation*}
for all $m=0,1,2,\dots$, and $0<T<\infty$. Hence we have
\begin{equation*}
K_{m+1} \leq K_0 + C_6K_m^2, \qquad m=0,1,2,\dots.
\end{equation*}
Therefore if
\begin{equation}\label{eq;small K}
K_0 < \frac{1}{4C_6},
\end{equation}
then we have a uniform bound of $\{K_m\}$ as
\begin{equation*}
K_m \leq \frac{1-\sqrt{1-4C_6K_0}}{2C_6}\leq 2K_0,
\qquad m=0,1,2,\dots.
\end{equation*}
For a moment we assume \eqref{eq;small K} for some $T>0$.
The uniform bound yields a limit 
$v \in C\bigl((0,T)\,;\, L_{\sigma}^r(\R^n)\bigr)$
with
\begin{equation}\label{eq;v1}
\lim_{m\to\infty}\sup_{0<t<T}
t^{\frac{1}{2}-\frac{n}{2r}}\|v_m(t)-v(t)\|_r=0
\quad\text{and}\quad
\sup_{0<t<T}t^{\frac{1}{2}-\frac{n}{2r}}
\|v(t)\|_{r}\leq 2K_0.
\end{equation}

Next we put $L_m=L_m(T)=
\sup\limits_{0<t<T}\|v_m(t)\|_{n,\infty}$
for $m = 0,1,2,\dots$.
Similarly, since $\|e^{t\Delta}a\|_{n,\infty}\leq 
C\|a\|_{n,\infty}$ and
\begin{equation*}
\left\| \int_0^t e^{(t-s)\Delta}\PP f(s)\,ds
\right\|_{n,\infty}
\leq C\int_0^t \sup_{0<s<\infty}
\|\PP f(s)\|_{n,\infty}\,ds
\leq C\sup_{0<s<\infty}\|\PP f(s) \|_{n,\infty}t
\end{equation*}
for all $t>0$. 
Hence we have
\begin{equation*}
L_0(T) \leq \|a\|_{n,\infty} + \sup_{0<t<T}
\|\PP f(t)\|_{n,\infty}T<\infty.
\end{equation*}
For every $\phi 
\in C_{0,\sigma}^\infty(\R^n)$, 
we have by integral by parts that
\begin{equation*}
\begin{split}
\bigl|\bigl(G^*[v_m,v_m](t),\varphi\bigr)\bigr|
&\leq
\int_0^t \bigl|\bigl(
v_m(s)\cdot\nabla e^{(t-s)\Delta}\phi, 
v_m(s)\bigr)\bigr|,ds
\\
&\leq
C\int_0^t
\|v_m(s)v_m(s)\|_{\frac{rn}{n+r},\infty}
\|\nabla e^{(t-s)\Delta}\phi
\|_{\frac{rn}{rn-n-r},1}\,ds
\\
&\leq
C\int_0^t 
\|v_m(s)\|_{r}\|v_m(s)\|_{n,\infty}
(t-s)^{-\frac{n}{2r}-\frac{1}{2}}
\|\phi\|_{\frac{n}{n-1},1}\,ds \\
&\leq 
C_7 K_m L_m \|\phi\|_{\frac{n}{n-1},1},
\end{split}
\end{equation*}
for all $0<t<T$. Then since 
$L_{\sigma}^{n,\infty}(\R^n)
=\bigl(L_{\sigma}^{\frac{n}{n-1},1}(\R^n)\bigr)^{*}$
and since $C_{0,\sigma}^\infty(\R^n)$ is dense in 
$L_{\sigma}^{\frac{n}{n-1},1}(\R^n)$, the duality implies
\begin{equation*}
\sup_{0<t<T}\| G^*[v_m,v_m](t)\|_{n,\infty}
\leq C_7K_mL_m\leq2C_7K_0 L_m,\qquad
m=0,1,2,\dots.
\end{equation*}
Hence, we have a linear recurrence
\begin{equation*}
L_m \leq L_0 + 2C_7K_0L_m,\qquad m=0,1,2,\dots.
\end{equation*}
Therefore, if
\begin{equation}\label{eq;small L}
K_0<\frac{1}{2C_7},
\end{equation}
then we have a uniform bound
\begin{equation*}
L_m \leq \frac{L_0}{1-2C_7K_0}=:L_*, 
\qquad m=01,2,\dots,
\end{equation*}
which yields a limit 
$v\in C\bigl((0,T)\,;\, L^{n,\infty}\bigr)$ with
\begin{equation}\label{eq;v2}
\lim_{m\to\infty}\sup_{0<t<T}\|v_m(t)-v(t)\|_{n,\infty}=0
\quad\text{and}\quad
\sup_{0<t<T}\|v(t)\|_{n,\infty} \leq L_*.
\end{equation}
Moreover, since $a \in X_{\sigma}^{n,\infty}
=\overline{D(-\Delta)}^{\|\cdot\|_{n,\infty}}$,
it holds that 
$\lim\limits_{t\searrow 0}
\|e^{t\Delta}a-a\|_{n,\infty}=0$ and that
there exists $\{a_{\ep}\}_{\ep>0}\subset D(-\Delta)
\subset L_{\sigma}^{n,\infty}(\R^n)\cap L^r(\R^n)$ 
such that 
$\lim\limits_{\ep\searrow 0}\|a-a_{\ep}\|_{n,\infty}=0$, which yields $K_0(T)\to 0$ as 
$T\searrow 0$. Hence we see that $v(t)\to a$ in $L^{n,\infty}(\R^n)$ as $t\searrow 0$. See the proof of Theorem \ref{thm;maximal}.

Next put $M_{m}=M_m(T)=\sup\limits_{0<t<T}t^{\frac{1}{2}}\|\nabla v_m(t)\|_{n,\infty}$
for $m=0,1,2,\dots$. It is easy to see that
$\|\nabla e^{t\Delta}a\|_{n,\infty}\leq
Ct^{-\frac{1}{2}}\|a\|_{n,\infty}$ and 
\begin{equation*}
\begin{split}
\left\| 
\nabla \int_0^t e^{(t-s)\Delta} \PP f(s)\,ds 
\right\|_{n,\infty}
&\leq
C\int_0^t (t-s)^{-\frac{1}{2}} 
\| \PP f(s)\|_{n,\infty} \,ds\\
&\leq C\sup_{0<s<\infty}\| \PP f(s)\|_{n,\infty}t^{\frac{1}{2}},
\end{split}
\end{equation*}
for all $t>0$. Hence we have
\begin{equation*}
M_0(T)\leq \|a\|_{n,\infty} 
+ \sup_{0<t<\infty}\| \PP f(t)\|_{n,\infty} T<\infty.
\end{equation*}
Moreover, we have
\begin{equation*}
\begin{split}
\|\nabla G^*[v_m,v_m](t)\|_{n,\infty}
&\leq
C\int_0^t (t-s)^{-\frac{n}{2r}-\frac{1}{2}}
\|v_m(s)\|_r\|\nabla v_m(s)\|_{n,\infty}\,ds
\\
&\leq C_8K_mM_m t^{-\frac{1}{2}},
\end{split}
\end{equation*}
for all $0<t<T$. Hence we have 
$\sup\limits_{0<t<T}t^{\frac{1}{2}}
\|\nabla G^*[v_m.v_m](t)\|_{n,\infty}\leq 2C_8K_0 M_m$ 
for $m=0,1,2,\dots$ and
\begin{equation*}
M_{m+1} \leq M_0 + 2C_8K_0 M_m, \qquad m=0,1,2,\dots.
\end{equation*}
Then if 
\begin{equation}\label{eq;small M}
K_0 < \frac{1}{2C_8},
\end{equation}
then we have
\begin{equation*}
M_m \leq \frac{M_0}{1-2C_8K_0}=M_*,
\qquad m=0,1,2,\dots,
\end{equation*}
which yields a limit 
$\nabla v \in C\bigl((0,T)\,;\, 
L^{n,\infty}(\R^n)\bigr)$ with
\begin{equation}\label{eq;v3}
\lim_{m\to\infty} \sup_{0<t<T}
t^{\frac{1}{2}}
\|\nabla v_m(t)-\nabla v(t)\|_{n,\infty}=0
\quad\text{and}\quad
\sup_{0<t<T}t^{\frac{1}{2}}
\|\nabla v(t)\|_{n,\infty}\leq M_*.
\end{equation}
Therefore, \eqref{eq;v1}, \eqref{eq;v2} and 
\eqref{eq;v3} yield that 
\begin{equation*}
G^*[v_m,v_m](t) \to G^{*}[v,v](t)
\quad\text{in } L_{\sigma}^{n,\infty}(\R^n)
\end{equation*}
Uniformly in $t\in [0,T)$ as $m\to\infty$
Therefore, letting $m\to \infty$ 
in \eqref{eq;iteration}, we see that
\begin{equation*}
v(t)=e^{t\Delta}a 
+\int_0^t e^{(t-s)\Delta} \PP f(s)\,ds
-\int_0^t e^{(t-s)\Delta} \PP
[v\cdot\nabla v](s)\,ds, \quad 0<t<T,
\end{equation*}
which is a desired mild solution of (N-S).

Since $a\in X_{\sigma}^{n,\infty}$ and $K_0(T)\to 0$ as $T\searrow 0$ as is mentioned above,
it suffices to take small $T>0$ so that
\eqref{eq;small K}, \eqref{eq;small L} and 
\eqref{eq;small M} holds.

Furthermore, we consider the case 
$a\in X_{\sigma}^{n,\infty}\cap L^r(\R^n)$.
Put
\begin{equation*}
\eta=\min\left\{\frac{1}{4C_6},\,\frac{1}{2C_7},\,\frac{1}{2C_8}\right\}.
\end{equation*}
Then we note that
\begin{equation*}
K_0(T) \leq C\|a\|_r T^{\frac{1}{2}-\frac{n}{2r}}
+
C\sup_{0<t<\infty}\|\PP f(t)\|_{n,\infty}T.
\end{equation*}
Therefore, letting $T\to 0$, we see that there exists $T_*>0$ such that
\begin{equation*}
K_0(T_*)\leq 
C\|a\|_r T_*^{\frac{1}{2}-\frac{n}{2r}}
+
C\sup_{0<t<\infty}
\|\PP f(t)\|_{n,\infty}T_* <\eta. 
\end{equation*}
Moreover, if we put $T^*
=\sup\{T_*\leq 1\,;\, C\|a\|_r T_*^{\frac{1}{2}-\frac{n}{2r}}
+
C\sup\limits_{0<t<\infty}
\|\PP f(t)\|_{n,\infty}
T_*^{\frac{1}{2}-\frac{n}{2r}}<\eta\}$,
then the existence time $T>0$ 
is bounded from below as
\begin{equation*}
T\geq T^*=\min\left\{1, \,
\left(\frac{\eta}{C\|a\|_r
+C\sup\limits_{0<t<\infty}
\|\PP f(t)\|_{n,\infty}}\right)^{\frac{2r}{r-n}}
\right\}.
\end{equation*} 

Finally, we shall prove 
\eqref{eq;v rho} and \eqref{eq;nablav rho}.
Ler $n<\rho<\infty$. 
Then by \eqref{eq;v1} and \eqref{eq;v3} we have
\begin{equation*}
\begin{split}
\left\| \int_0^t 
e^{(t-s)\Delta}
\PP [v\cdot\nabla v](s)\,ds\right\|_{\rho}
&\leq C\int_0^t (t-s)^{-\frac{1}{2}-\frac{n}{2r}+\frac{n}{2\rho}}\|v(s)\|_r\|\nabla v(s)\|_{n,\infty}\,ds
\\
&\leq C 2K_0M_* t^{-\frac{1}{2}+\frac{n}{2\rho}},
\end{split}
\end{equation*}
for all $0<t<T$. Thus,
\begin{equation}\label{eq;Nrho}
\sup_{0<t<T}t^{\frac{1}{2}-\frac{n}{2\rho}}
\|v(t)\|_\rho \leq 
C\|a\|_{n,\infty} + C\sup_{0<t<\infty}
\|\PP f(t)\|_{n,\infty} T +2CK_0M_*=:N_{\rho}(T)<\infty.
\end{equation}
Similarly, let $n<\varrho<\rho$. 
By \eqref{eq;v3} and \eqref{eq;Nrho} we have
\begin{equation*}
\begin{split}
\left\|\int_0^t
\nabla e^{(t-s)\Delta}
\PP [v\cdot\nabla v](s)\,ds \right\|_{\varrho}
&\leq C\int_0^t 
(t-s)^{-1+\frac{n}{2\varrho}-\frac{n}{2\rho}}
\|v(s)\|_{\rho} \|\nabla v(s)\|_{n,\infty}\,ds
\\
&\leq CN_\rho(T)M_* t^{-1+\frac{n}{2\varrho}},
\end{split}
\end{equation*}
for all $0<t<T$. Hence we obtain that
\begin{equation}\label{eq;Nprho}
\sup_{0<t<T}t^{1-\frac{n}{2\varrho}} 
\| \nabla v(t)\|_{\varrho} \leq
C\|a\|_{n,\infty} 
+C \sup_{0<t<T}\| \PP f(t) \|_{n,\infty} T 
+ CN_{\rho}M_* =:N_{\varrho}^\prime(T)<\infty.
\end{equation}
Furthermore, by the Gagliardo-Nirenberg inequality
$\|v(t)\|_\infty 
\leq C\|v(t)\|_{2n}^{\frac{1}{2}}
\|\nabla v(t)\|_{2n}^{\frac{1}{2}}$,
 we obtain that
$\sup\limits_{0<t<T}t^{\frac{1}{2}}\|v(t)\|_{\infty} <\infty$.
This completes the proof of 
Theorem \ref{thm;local}.
\end{proof}
In the end of this subsection, we note that by  
\eqref{eq;v rho} and \eqref{eq;nablav rho} we note that
\begin{equation}\label{eq;nonlinear Ln}
\|v \cdot \nabla v (t)\|_{n} \leq
\|v(t)\|_{2n}\|\nabla v(t)\|_{2n}\leq
t^{-1}N_{2n}(T)N_{2n}^\prime(T)<\infty,
\qquad 0<t<T.
\end{equation} 
\subsection{Regularity of local mild solution}
In previous subsection, 
we construct a local mild solution of (N-S) 
for initial data $a\in X_\sigma^{n,\infty}$.
In this subsection, 
we discuss the mild solution
 constructed Theorem \ref{thm;local} 
 is actually a strong solution of (N-S), 
 provided the external force $f$ satisfies 
 the condition (A).

\begin{theorem}\label{thm;regularity}
Let $a \in X_\sigma^{n,\infty}$ and let
$f\in BC\bigl([0,\infty)\,;\,
 L^{n,\infty}(\R^n)\bigr)$.
 Suppose $v(t)$ is the mild solution on
 $[0,T)$ of (N-S) obtained by Theorem \ref{thm;local}.
Furthermore, let $\PP f$ be a 
H\"{o}lder continuous function on $[0,T)$ with 
value in $L_{\sigma}^{n,\infty}(\R^n)$. 
If $\PP f$ satisfies
\begin{equation*}\tag{A}
\lim_{h\searrow 0} \|e^{h\Delta} \PP f(t)
-\PP f(t)\|_{n,\infty}=0
\qquad \text{for }0\leq t <T,
\end{equation*}
The the mild solution $v(t)$ satisfies
\begin{equation*}
\frac{d}{dt}v-\Delta v +\PP [v\cdot\nabla v]=\PP f
\quad\text{in }L_{\sigma}^{n,\infty}(\R^n)
\quad \text{for }0<t<T.
\end{equation*}
\end{theorem}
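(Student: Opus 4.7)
The plan is to reformulate (N-S) as an abstract inhomogeneous evolution equation on $X=L^{n,\infty}_{\sigma}(\R^n)$ with generator $\PP\Delta$ and forcing $F(t):=\PP f(t)-\PP[v\cdot\nabla v](t)$, and then invoke Lemma \ref{thm;abstract evo}. Because $v\cdot\nabla v$ is singular at $t=0$ (see \eqref{eq;nonlinear Ln}), I would fix an arbitrary $0<t_0<T$ and work with the shifted solution $\tilde v(\tau):=v(\tau+t_0)$ on $[0,T-t_0)$, which satisfies the mild formula
\begin{equation*}
\tilde v(\tau)=e^{\tau\Delta}v(t_0)+\int_0^\tau e^{(\tau-s)\Delta}\bigl[\PP f(s+t_0)-\PP[\tilde v\cdot\nabla\tilde v](s)\bigr]\,ds.
\end{equation*}
Once the conditions of Lemma \ref{thm;abstract evo} are verified for $\tilde F(\tau):=\PP f(\tau+t_0)-\PP[\tilde v\cdot\nabla\tilde v](\tau)$, the lemma yields the differential equation in $X$ on $(t_0,T)$, and arbitrariness of $t_0$ produces the claim on $(0,T)$.

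Condition (A) for $\tilde F$ is immediate: the hypothesis handles $\PP f$, while for the nonlinear term I would use \eqref{eq;nonlinear Ln}, which gives $\PP[v\cdot\nabla v](t)\in L^n_\sigma(\R^n)$ for every $0<t<T$. Since the heat semigroup is $C_0$ on $L^n$, we have $L^n_\sigma\subset X^{n,\infty}_\sigma$, and the $L^{n,\infty}$-convergence $\|e^{\ep\Delta}\PP[v\cdot\nabla v](t)-\PP[v\cdot\nabla v](t)\|_{n,\infty}\to 0$ as $\ep\searrow 0$ follows from the corresponding $L^n$-convergence combined with the embedding $L^n\hookrightarrow L^{n,\infty}$.

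The main technical step is to establish H\"older continuity of $t\mapsto\PP[v\cdot\nabla v](t)$ in $L^{n,\infty}_\sigma$ on $[t_0,T-\delta]$ for any $\delta>0$. I would decompose
\begin{equation*}
v\cdot\nabla v(t)-v\cdot\nabla v(s)=\bigl[v(t)-v(s)\bigr]\cdot\nabla v(t)+v(s)\cdot\bigl[\nabla v(t)-\nabla v(s)\bigr],
\end{equation*}
then estimate each factor in $L^{2n}$ so that the product lies in $L^n\hookrightarrow L^{n,\infty}$, and derive H\"older continuity of $v$ and $\nabla v$ in $L^{2n}$ from the integral equation itself. For $t_0\le s<t\le T-\delta$ we write
\begin{equation*}
v(t)-v(s)=(e^{(t-s)\Delta}-I)v(s)+\int_s^t e^{(t-\tau)\Delta}\bigl[\PP f(\tau)-\PP[v\cdot\nabla v](\tau)\bigr]\,d\tau,
\end{equation*}
and bound the first term by $\|(e^{h\Delta}-I)v(s)\|_{2n}\le Ch^{1/2}\|\nabla v(s)\|_{2n}$ (a direct consequence of $(e^{h\Delta}-I)w=\int_0^h \Delta e^{\tau\Delta}w\,d\tau$ and the smoothing bound $\|\nabla e^{\tau\Delta}\|_{L^{2n}\to L^{2n}}\lesssim \tau^{-1/2}$), while the Duhamel integral is controlled by the $L^p$-$L^q$ estimates of Proposition \ref{prop;LpLq} together with the bounds \eqref{eq;v rho} and \eqref{eq;nablav rho} on $v$ and $\nabla v$. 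An analogous argument, differentiating once before estimating, yields H\"older continuity of $\nabla v$ in $L^{2n}$ on $[t_0,T-\delta]$.

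The main obstacle is precisely this last step: propagating enough regularity through the integral equation to get uniform H\"older control of $v$ and $\nabla v$ in a sub-critical Lebesgue norm on compact subintervals of $(0,T)$, while only the critical $L^{n,\infty}$-forcing of $\PP f$ is assumed. Once this H\"older estimate is established, $\tilde F$ meets both hypotheses of Lemma \ref{thm;abstract evo}, hence $\tilde v\in C^1((0,T-t_0);L^{n,\infty}_\sigma)$ with $\Delta \tilde v\in C((0,T-t_0);L^{n,\infty}_\sigma)$ and $\frac{d}{d\tau}\tilde v=\Delta\tilde v-\PP[\tilde v\cdot\nabla\tilde v]+\PP f(\cdot+t_0)$, which unwinds to (DE) on $(t_0,T)$. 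Taking $t_0\searrow 0$ completes the proof.
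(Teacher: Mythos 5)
Your overall strategy coincides with the paper's: both reduce the theorem to verifying, for each $0<t_0<T$, that the forcing $\PP f-\PP[v\cdot\nabla v]$ is H\"older continuous on $[t_0,T)$ with values in $L^{n,\infty}_{\sigma}(\R^n)$ and satisfies (A), and then invoke Lemma \ref{thm;abstract evo}; your verification of (A) for the nonlinear term via $\PP[v\cdot\nabla v](t)\in L^{n}_{\sigma}(\R^n)$ and $L^{n}\hookrightarrow L^{n,\infty}$ is exactly the paper's. Where you differ is in how the H\"older estimate is produced. The paper estimates $e^{t\Delta}a$, the $f$-Duhamel term and $G^*[v,v]$ separately in $L^{n,\infty}$ (the last via the $J_1$, $J_{2,\ep}$ splitting and fractional powers of $-\Delta$), obtains H\"older continuity of $v$ and $\nabla v$ in $L^{n,\infty}$, deduces H\"older continuity of $\PP[v\cdot\nabla v]$ only in $L^{\frac{n}{2},\infty}$, and then must interpolate against a uniform $L^r$ bound to return to $L^{n,\infty}$. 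Your route --- restarting the Duhamel formula at time $s$ and proving H\"older continuity of $v$ and $\nabla v$ in $L^{2n}$ so that the product lands directly in $L^{n}\hookrightarrow L^{n,\infty}$ --- avoids that final interpolation, and for the velocity itself it is sound: the identity $v(t)=e^{(t-s)\Delta}v(s)+\int_s^t e^{(t-\tau)\Delta}[\cdots]\,d\tau$ and the bound $\|(e^{h\Delta}-I)v(s)\|_{2n}\leq Ch^{1/2}\|\nabla v(s)\|_{2n}$ are both legitimate, with $\|\nabla v(s)\|_{2n}$ bounded on $[t_0,T-\delta]$ by \eqref{eq;nablav rho}.

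The one place your sketch does not go through as written is the claim that ``an analogous argument, differentiating once'' gives H\"older continuity of $\nabla v$ in $L^{2n}$. Differentiating your decomposition turns the leading term into $(e^{(t-s)\Delta}-I)\nabla v(s)$, and the analogue of the bound you cite would require control of $\|\nabla^2 v(s)\|_{2n}$, which is not among \eqref{eq;v Ln}--\eqref{eq;nablav rho}. You must instead either (a) re-expand $\nabla v(s)$ by the Duhamel formula from an earlier time $s_0<t_0$ and split $\Delta e^{\theta\Delta}\nabla e^{(s-\tau)\Delta}=(-\Delta)^{\alpha}e^{\theta\Delta}\cdot(-\Delta)^{1-\alpha}\nabla e^{(s-\tau)\Delta}$ with $\alpha$ close to $1$ so that the $\tau$-integral converges near $\tau=s$ (this is precisely the paper's treatment of $\nabla J_{2,\ep}$, at the cost of a smaller H\"older exponent), or (b) use $\|(e^{h\Delta}-I)w\|_{2n}\leq Ch^{\alpha}\|(-\Delta)^{\alpha}w\|_{2n}$ together with a bound on $(-\Delta)^{\alpha}\nabla v(s)$ extracted from the integral equation. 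Since you explicitly flagged this step as the main obstacle, I read it as an incomplete rather than an incorrect step; once the fractional-power splitting is supplied, your argument closes and yields the theorem.
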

Proof of Theorem \ref{thm;regularity} is rather standard. For instance, see Kozono and Ogawa \cite{Kozono Ogawa IUMJ1992} in $L^{n}$-framework,
and our previous work \cite[Theorem 5.3]{Okabe Tsutsui periodic}. However we shall give a proof for reader's convenience.
\begin{proof}
By the standard theory of abstract evolution equations, it suffices to confirm that every
$0<t_0<T$, $\PP[v\cdot\nabla v](t)$ is 
H\"{o}lder continuous on $[t_0,T)$ with value
in
$L_{\sigma}^{n,\infty}(\R^n)$. 

Firstly, we show the H\"{o}lder continuity of $e^{t\Delta}a$.
Since $L^{n,\infty}_{\sigma}(\R^n)
=\bigl(L_{\sigma}^{\frac{n}{n-1},1}(\R^n)\bigr)^*$,
the duality yields that for every $\phi \in C_{0,\sigma}^\infty(\R^n)$ it hols that
\begin{equation*}
\begin{split}
\bigl(e^{(t+h)\Delta}a-e^{t\Delta}a,\phi\bigr)
&=
\bigl(e^{\frac{t}{2}\Delta}a,
e^{(\frac{t}{2}+h)\Delta}\phi-e^{\frac{t}{2}\Delta}\phi\bigr)
\\
&=\Bigl(e^{\frac{t}{2}\Delta}a,
\int_0^1 \frac{d}{d\theta} 
e^{(\frac{t}{2}+\theta h)\Delta}\phi\,d\theta\Bigr)
\\
&= 
\Bigl(e^{\frac{t}{2}\Delta}a,
\int_0^1 
h\Delta e^{(\frac{t}{2}+\theta h)\Delta}\phi\,d\theta
\Bigr)
\\
&=h \Bigl(\Delta e^{\frac{t}{2}\Delta}a, \int_0^1 e^{(\frac{t}{2}+\theta h)\Delta}\phi\,d\theta\Bigr)
\end{split}
\end{equation*}
for all $t\in [t_0,T)$ and all $0<h<T-t$.
Therefore it holds that
\begin{equation*}
\bigl|\bigl(e^{(t+h)\Delta}a-e^{t\Delta}a,
\phi\bigr)\bigr|\leq 
Ch \|\Delta e^{\frac{t}{2}\Delta}a\|_{n,\infty} 
\|\phi\|_{\frac{n}{n-1},1}
\leq Ch \sup_{t_0<t<T}\|\Delta e^{\frac{t}{2}\Delta}a\|_{n,\infty}\|\phi \|_{\frac{n}{n-1},1},
\end{equation*}
which implies $\|e^{(t+h)\Delta}a-e^{t\Delta}a\|_{n,\infty}\leq
Ch \sup\limits_{t_0<t<T}\|\Delta e^{\frac{t}{2}\Delta}\phi\|_{n,\infty}$.
Similarly, we have
\begin{equation*}
\begin{split}
\|\nabla e^{(t+h)\Delta}a-
\nabla e^{t \Delta}a\|_{n,\infty}
&\leq C\Bigl(\frac{t}{2}\Bigr)^{-\frac{1}{2}}
\| e^{(\frac{t}{2}+h)\Delta}a 
- e^{\frac{t}{2}\Delta}a \|_{n,\infty}
\\
&\leq C\Bigl(\frac{t_0}{2}\Bigr)^{-\frac{1}{2}}
h \sup_{t_0<t<T}\|\Delta e^{\frac{t}{4}\Delta}a\|_{n,\infty}
\end{split}
\end{equation*}
for all $t\in [t_0,T)$ and all $0<h<T-t$.

Next we recall $\PP f$ is H\"{o}lder continuous on
$[0,T]$ in $L^{n,\infty}(\R^n)$ with some order $\alpha>0$, i.e.,
\begin{equation*}
 \|\PP f(t)-\PP f(s)\|_{n,\infty} \leq C_T
 |t-s|^\alpha
 \qquad\text{for }0\leq t, s \leq T,
\end{equation*}
where $C_T>0$ is independent of $t$ and $s$.
By changing variable $t=s-h$, we see that
\begin{multline*}
\int_0^{t+h} e^{(t+h-s)\Delta}\PP f(s)\,ds
-\int_0^t e^{(t-s)\Delta}\PP f(s)\,ds \\
=\int_0^h e^{(t+h-s)\Delta}\PP f(s)\,ds
+\int_h^{t+h} e^{(t+h-s)\Delta}\PP f(s)\,ds
-\int_0^t e^{(t-s)\Delta}\PP f(s)\,ds \\
=
\int_0^h e^{(t+h-s)\Delta}\PP f(s)\,ds
+\int_0^t e^{(t-s)\Delta}
[\PP f(s+h)-\PP f(s)]\,ds=:I_1+I_2.
\end{multline*}
We estimate $I_1$ and $\nabla I_1$ as follow,
\begin{equation*}
\|I_1\|_{n,\infty} \leq \int_0^t 
\|e^{(t+h-s)\Delta}\PP f(s)\|_{n,\infty}\,ds
\leq Ch \sup_{0<s<T} \|\PP f(s)\|_{n,\infty}
\end{equation*}
and 
\begin{equation*}
\|\nabla I_1\|_{n,\infty}
\leq
\int_0^h \|\nabla e^{(h-s)\Delta}e^{t\Delta}\PP f(s)\|_{n,\infty}\,ds
\leq
C\int_0^h (h-s)^{-\frac{1}{2}} 
\|\PP f(s)\|_{n,\infty}\,ds 
\leq Ch^{\frac{1}{2}}
\sup_{0<s<T}\|\PP f(s) \|_{n,\infty}
\end{equation*}
for all $0\leq t<T$.
Next we estimate $I_2$ and $\nabla I_2$,
\begin{equation*}
\|I_2\|_{n,\infty} \leq 
\int_0^t \|e^{(t-s)\Delta} 
[\PP f(s+h)-\PP f(s)]\|_{n,\infty}\,ds
\leq Ch^{\alpha} T
\end{equation*}
and
\begin{equation*}
\|\nabla I_2\|_{n,\infty}
\leq
C\int_0^t (t-s)^{-\frac{1}{2}}
\|\PP f(s+h)-\PP f(s)\|_{n,\infty}\,ds
\leq C h^\alpha T^{\frac{1}{2}}
\end{equation*}
for all $0\leq t< T$.

In order to derive that $G^*[v,v](\cdot)$ is 
H\"{o}lder continuous on $[t_0,T)$ in 
$L^{n,\infty}_{\sigma}(\R^n)$, we write
\begin{equation*}
\begin{split}
G^*[v,v](t+h)-G^{*}[v,v](t)
&=
-\int_t^{t+h} e^{(t+h-s)\Delta} \PP [v\cdot\nabla v](s)\,ds
-
\int_0^t [e^{(t+h-s)\Delta}-e^{(t-s)\Delta}]
\PP [v\cdot\nabla v](s)\,ds
\\
&=: J_1+ J_2.
\end{split}
\end{equation*}
Since $\PP[v\cdot\nabla v] \in 
BC\bigl([t_0,T)\,;\,
L^{n}_{\sigma}(\R^n)\bigr)$ 
by \eqref{eq;nonlinear Ln}, 
we have
\begin{equation*}
\| J_1\|_{n,\infty}
\leq
Ch \sup_{t_0<s<T}\|\PP [v\cdot \nabla v](s)\|_n
\end{equation*}
and
\begin{equation*}
\| \nabla J_1\|_{n,\infty} 
\leq
C\int_{t}^{t+h} (t+h-s)^{-\frac{1}{2}}
\| \PP [v\cdot\nabla v](s)\|_{n}\,ds
\leq Ch^{\frac{1}{2}}
\sup_{t_0<s<T}\|\PP [v\cdot\nabla v](s)\|_n
\end{equation*}
for all $t\in (t_0,T)$ and for all $0<h<T-t$.
Because,
\begin{equation*}
J_2 =
\lim_{\ep\searrow 0} J_{2,\ep}
:=\lim_{\ep\searrow 0}
\int_{\ep}^{t-\ep}
[e^{(t+h-s)\Delta}-e^{(t-s)\Delta}]
\PP [v\cdot\nabla v](s)\,ds,
\end{equation*}
it is enough to bound $J_{2,\ep}$
for sufficiently small $\ep>0$,
instead of $J_2$.
Thus we observe, 
\begin{equation*}
J_{2,\ep} = \int_{\ep}^{t-\ep}
\int_0^1 \frac{d}{d\theta} e^{(t-s+\theta h)\Delta} \PP [v\cdot\nabla v](s)\,d\theta ds
=
h \int_{\ep}^{t-\ep}\int_0^1 \Delta e^{(t-s+\theta h)\Delta} \PP [v\cdot\nabla v](s)\,d\theta ds
\end{equation*}
for all $t \in [t_0,T)$ and all $0<h<T-t$.
By \eqref{eq;v1} and \eqref{eq;v3} we have
\begin{equation*}
\begin{split}
\| J_{2,\ep}\|_{n} 
&\leq
h \int_{\ep}^{t-\ep}
\int_0^1 \|(-\Delta)^{\frac{1}{2}}
e^{\theta h\Delta}(-\Delta)^{\frac{1}{2}}
e^{(t-s)\Delta}\PP [v\cdot\nabla v](s)\|_{n}
\,d\theta ds
\\
&\leq
Ch \int_{\ep}^{t-\ep}\int_0^1
(\theta h)^{-\frac{1}{2}}\,d\theta
(t-s)^{-\frac{n}{2r}-\frac{1}{2}}
\|v(s)\|_r \|\nabla v(s)\|_{n,\infty}\, ds
\\
&\leq
C h^{\frac{1}{2}} \int_0^t
(t-s)^{-\frac{n}{2r}-\frac{1}{2}} 
s^{-1+\frac{n}{2r}}\,ds
\sup_{0<s<T} s^{\frac{1}{2}-\frac{n}{2r}}
\|v(s)\|_r 
\sup_{0<s<T} s^{\frac{1}{2}}\|\nabla v(s) \|_{n,\infty}\,ds
\\
&\leq
Ch^{\frac{1}{2}} K_0M_* t^{-\frac{1}{2}}
\\
&\leq
Ch^{\frac{1}{2}} K_0M_* t_0^{-\frac{1}{2}},
\end{split}
\end{equation*}
and by \eqref{eq;v3} and \eqref{eq;Nrho} with 
$2n<\rho$, i.e., $\frac{n}{2\rho}<\frac{1}{4}$,
\begin{equation*}
\begin{split}
\| \nabla J_{2,\ep}\|_n
&\leq
Ch \int_{\ep}^{t-\ep} \int_0^1
\| (-\Delta)^{\frac{3}{4}} e^{\theta h\Delta}
(-\Delta)^{\frac{3}{4}} e^{(t-s)\Delta}
\PP [v\cdot \nabla v](s)\|_n\, d\theta ds
\\
&\leq
Ch^{\frac{1}{4}} \int_{\ep}^{t-\ep}
(t-s)^{-\frac{n}{2\rho}-\frac{3}{4}} 
\|v(s)\|_\rho \|\nabla v(s)\|_{n,\infty}\,ds
\\
&\leq 
Ch^{\frac{1}{4}}\int_0^t 
(t-s)^{-\frac{n}{2\rho}-\frac{3}{4}}
s^{-1+\frac{n}{2\rho}}\,ds
\sup_{0<s<T} s^{\frac{1}{2}-\frac{n}{2\rho}}
\|v(s)\|_\rho 
\sup_{0<s<T} s^{\frac{1}{2}}
\|\nabla v(s)\|_{n,\infty}
\\
&\leq
Ch^{\frac{1}{4}} N_\rho (T) M_* t^{-\frac{3}{4}}
\\
&\leq
Ch^{\frac{1}{4}} N_\rho (T) M_* t_0^{-\frac{3}{4}}, 
\end{split}
\end{equation*}
for all $t\in [t_0,T)$, $0<h < T-t$ and for all $0<\ep<t/2$.
Therefore, there exists $\beta>0$ so that
\begin{equation}\label{eq;holder nonlinear}
\| v(t+h)-v(t)\|_{n,\infty}
\leq C_T h^\beta
\quad\text{and}\quad
\| \nabla v(t+h)-v(t)\|_{n,\infty}
\leq C_T h^\beta
\end{equation}
for all $[t_0,T)$ and $0<h<T-t$, where the
constant $C_T>0$ is independent of $t$ and $h$.

Hence \eqref{eq;holder nonlinear} yields that
$\PP [v\cdot \nabla v](\cdot)$ is 
H\"{o}lder continuous on $[t_0,T)$ in $L^{n,\infty}_{\sigma}(\R^n)$.
Indeed, we can see 
\begin{multline*}
\|\PP[v\cdot\nabla v](t+h)-\PP[v\cdot\nabla v](t)
\|_{\frac{n}{2},\infty}
\\
\leq
C\|\bigl(v(t+h)-v(t)\bigr)\cdot \nabla v(t+h)
\|_{\frac{n}{2},\infty}
+C\| v(t)\cdot
\bigl(\nabla v(t+h)-\nabla v(t)\bigr)
\|_{\frac{n}{2},\infty}
\\
\leq C_Th^{\beta}
\left(\sup_{t_0<t<T}\|\nabla v(t)\|_{n,\infty}
+\sup_{t_0<t<T} \|v(t)\|_{n,\infty}
\right)
\end{multline*}
for all $t\in[t_0,T)$ and all $0<h<T-t$.
On the other hand,
$\sup\limits_{t_0<t<T}\|\PP[v\cdot\nabla v](t)\|_{r}<\infty$ for some $r>n$ with \eqref{eq;Nrho}
and \eqref{eq;Nprho} for suitable $\rho>n$. 
The H\"{o}lder continuity of $\PP[v\cdot\nabla v]$
in $L^{n,\infty}_{\sigma}(\R^n)$ can be derived by
those inequalities above 
via a H\"{o}lder interpolation inequality.

Finally, we note that $\PP [v\cdot \nabla v](\cdot)$ also satisfies the assumption (A) since
$\PP [v\cdot \nabla v](t)\in L_{\sigma}^{n}(\R^n)$ for $t>0$.
By the virtue of Lemma \ref{thm;abstract evo} 
our mild solution $v$ obtained by Theorem \ref{thm;local}
is actually the strong solution on $[t_0,T)$ of
(N-S), i.e., it holds that 
\begin{equation*}
\frac{d}{dt}v(t) -\Delta v(t) 
+\PP[v\cdot \nabla v](t)=\PP f(t)
\quad\text{in }
L_{\sigma}^{n,\infty}(\R^n)
\quad \text{for }t_0<t<T,
\end{equation*}
whenever $\PP f$ satisfies the assumption (A).
Since $0<t_0<T$ is arbitrary, the strong solution
$v$ satisfies the differential equation on
$0<t<T$. This completes the proof.
\end{proof} 

\subsection{Uniqueness of local mild solutions}
The uniqueness of the mild solution of (N-S) 
is already established, see Kozono and Yamazaki 
\cite{Kozono Yamazaki MZ1998}. 
For just reader's convenience 
we give the proof of the following theorem.
\begin{theorem}\label{thm;local uniqueness}
Let $n<r<\infty$. Then there exists a constant 
$\kappa=\kappa(n,r)>0$ 
with the following property.
Let $a\in X_{\sigma}^{n,\infty}$ or $a \in L^{n,\infty}_{\sigma}(\R^n)\cap L^r(\R^n)$, and let
$\PP f \in BC([0,T)\,;\, L^{n,\infty}_{\sigma}(\R^n))$. 
Suppose $v$ is the mild solution on $(0,T)$ of (N-S) obtained by Theorem \ref{thm;local}.
Suppose $w$ is also a mild solution on $(0,T)$
of (N-S) which satisfies 
$t^{\frac{1}{2}-\frac{n}{2r}}
w \in BC\bigl((0,T)\,;\,L^{r}(\R^n)\bigr)$.
If 
\begin{equation}\label{eq;uniqueness}
\limsup_{t\to0} t^{\frac{1}{2}-\frac{n}{2r}}
\|w(t)\|_r\leq \kappa
\end{equation}
then $v\equiv w$ on $[0,T)$.
\end{theorem}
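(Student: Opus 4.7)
\begin{pf}[Proposal]
The plan is to set $U(t):=v(t)-w(t)$ and exploit the divergence-free structure of both $v$ and $w$ to rewrite the nonlinear difference in divergence form. Since $v$ and $w$ are mild solutions of (N-S) with the same initial data $a$ and forcing $f$, subtracting (IE) gives
\begin{equation*}
U(t)=-\int_0^t e^{(t-s)\Delta}\PP[v\cdot\nabla v-w\cdot\nabla w](s)\,ds.
\end{equation*}
Using $v\cdot\nabla v-w\cdot\nabla w=v\cdot\nabla U+U\cdot\nabla w$ together with $\Div v=\Div w=0$, we recast
\begin{equation*}
U(t)=-\int_0^t \nabla\cdot e^{(t-s)\Delta}\PP\bigl[v\otimes U+U\otimes w\bigr](s)\,ds.
\end{equation*}

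Next I would estimate in $L^r$ using the mapping bound $\|\nabla e^{t\Delta}\PP g\|_r\leq Ct^{-\frac{n}{2r}-\frac{1}{2}}\|g\|_{r/2}$ (valid for $r>n$) combined with H\"older to obtain
\begin{equation*}
\|U(t)\|_r\leq C\int_0^t (t-s)^{-\frac{n}{2r}-\frac{1}{2}}\|U(s)\|_r\bigl(\|v(s)\|_r+\|w(s)\|_r\bigr)\,ds.
\end{equation*}
Introducing $\alpha:=\frac{1}{2}-\frac{n}{2r}\in(0,\frac{1}{2})$ and the scale-invariant quantities
\begin{equation*}
\eta(t):=\sup_{0<s<t}s^{\alpha}\|U(s)\|_r,\qquad K_v(t):=\sup_{0<s<t}s^{\alpha}\|v(s)\|_r,\qquad K_w(t):=\sup_{0<s<t}s^{\alpha}\|w(s)\|_r,
\end{equation*}
the beta integral $\int_0^t(t-s)^{\alpha-1}s^{-2\alpha}\,ds=B(\alpha,1-2\alpha)t^{-\alpha}$ yields the key self-bound
\begin{equation*}
\eta(t)\leq C^{\prime}\bigl(K_v(t)+K_w(t)\bigr)\eta(t),\qquad C^{\prime}:=CB(\alpha,1-2\alpha).
\end{equation*}

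I would then fix the threshold $\kappa:=\frac{1}{8C^{\prime}}$. By hypothesis, $K_w(t)\leq \kappa+o(1)$ as $t\searrow 0$, so for sufficiently small $\tau>0$ one has $K_w(\tau)<\frac{1}{4C^{\prime}}$. Moreover $K_v(\tau)\to 0$ as $\tau\searrow 0$: in the $X_{\sigma}^{n,\infty}$-case this follows from the density argument used in the proof of Theorem \ref{thm;local} (approximating $a$ by elements of $L^r$ and applying $L^p$-$L^q$ estimates), while in the $L^{n,\infty}\cap L^r$-case it is immediate from $\tau^{\alpha}\|e^{t\Delta}a\|_r\leq C\tau^{\alpha}\|a\|_r$. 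Hence $C^{\prime}(K_v(\tau)+K_w(\tau))<\tfrac12$ for some $\tau>0$, forcing $\eta(\tau)=0$ and therefore $v\equiv w$ on $[0,\tau]$.

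Finally, to extend to the whole interval, I would define $T^{*}:=\sup\{\tau\in[0,T):v\equiv w\text{ on }[0,\tau]\}$ and argue by contradiction exactly as in Section \ref{subsec;UWMS}: if $T^{*}<T$ then $v(T^{*})=w(T^{*})$ by continuity, and since the shifted profiles $\tilde v(t):=v(t+T^{*})$, $\tilde w(t):=w(t+T^{*})$ are mild solutions on $[0,T-T^{*})$ of (N-S) with forcing $f(\cdot+T^{*})$ and common initial datum lying in $X_{\sigma}^{n,\infty}$ (by Remark \ref{rem;local}\,(i)) and, by continuity of $\tilde w$ at $0$, trivially satisfying $\limsup_{t\to 0}t^{\alpha}\|\tilde w(t)\|_r=0\leq\kappa$, the local argument just established yields $\tilde v\equiv\tilde w$ on a further interval, contradicting the definition of $T^{*}$. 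The main technical hurdle is not the comparison estimate itself but ensuring that $K_v(\tau)$ can be made small as $\tau\searrow 0$ under the weaker hypothesis $a\in X_{\sigma}^{n,\infty}$; this is where the characterization in Theorem \ref{thm;maximal} together with the density approximation used in the proof of Theorem \ref{thm;local} plays the decisive role.
\end{pf}
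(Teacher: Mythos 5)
Your argument is correct, but it closes the contraction in a different norm than the paper does, so it is worth comparing the two. You rewrite the difference $U=v-w$ in divergence form and contract in the Kato-type auxiliary norm $\eta(t)=\sup_{0<s<t}s^{\alpha}\|U(s)\|_r$, using the $L^{r/2}\to L^r$ smoothing of $\nabla e^{t\Delta}\PP$ and a beta-function integral; the smallness of the coefficient comes from $K_v(\tau)+K_w(\tau)$ being below threshold for small $\tau$, which is exactly where the hypothesis \eqref{eq;uniqueness} and the decay $K_v(\tau)\to0$ (from the construction in Theorem \ref{thm;local}, or trivially when $a\in L^r$) enter. The paper instead measures the difference in the critical norm, $D(0,t)=\sup_{0<s<t}\|U(s)\|_{n,\infty}$, pairing $G[U,v]$ and $G[w,U]$ against $\phi\in C^{\infty}_{0,\sigma}$ and using the Lorentz duality $L^{n,\infty}_\sigma=\bigl(L^{\frac{n}{n-1},1}_\sigma\bigr)^{*}$, so that only one factor of each product is placed in the weighted $L^r$ class while the other stays in $L^{n,\infty}$. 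Your route is more elementary (no Lorentz duality, only H\"older and heat kernel bounds, needing $r>2$ which is automatic), at the price of putting both factors in $L^r$; the paper's route is the one that generalizes to situations where the difference is only known to be bounded in $L^{n,\infty}$. Both yield an admissible $\kappa$, merely with different constants. Two small remarks on your continuation step: (a) invoking Remark \ref{rem;local}(i) to place $v(T^{*})$ in $X^{n,\infty}_{\sigma}$ is unnecessary --- all you need is $t^{\alpha}\|\tilde v(t)\|_r\to0$ and $t^{\alpha}\|\tilde w(t)\|_r\to0$, which follow at once from the boundedness of $\|v(s)\|_r$ and $\|w(s)\|_r$ for $s$ near $T^{*}>0$; and (b) you implicitly use that the shifted profiles satisfy the shifted integral equation, which requires the routine semigroup identity applied to (IE). The paper avoids the shift altogether by noting that the integrals over $[0,\tau]$ vanish once $U\equiv0$ there and extracting a uniform step size $\tau_0$ from the global $L^r$ bounds; the two continuation mechanisms are equivalent here.
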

\begin{remark}\rm
If $a\in X_{\sigma}^{n,\infty}$, we are able to omit \eqref{eq;uniqueness}, since 
$t^{\frac{1}{2}-\frac{n}{2r_*}}\|G^*[w,w](t)\|_{r_*}\to 0$ as $t\to 0$ for some $n<r_*<r$
and since $t^{\frac{1}{2}-\frac{n}{2r_*}}\|e^{t\Delta}a\|_{r_*}\to 0$ as $t\to 0$.
We assume \eqref{eq;uniqueness} only for 
the simplicity for the proof.

Furthermore, 
the argument introduced by Brezis \cite{Brezis}
enables us to remove the condition 
$t^{\frac{1}{2}-\frac{n}{2r}}
w \in BC\bigl((0,T)\,;\,L^{r}(\R^n)\bigr)$.
Namely, the mild solution $w \in BC\bigl([0,T)\,;\,L^{n,\infty}_{\sigma}(\R^n)\bigr)\cap
C\bigl((0,T)\,;\,L^r(\R^n)\bigr)$ with some $r>n$ of (N-S) necessarily 
satisfies \eqref{eq;uniqueness}
with $\kappa=0$. See appendix.
\end{remark}
\begin{proof}
By \eqref{eq;v1} and 
$a\in L^{n,\infty}_{\sigma}(\R^n)\cap 
L^r(\R^n)$ we note that
\begin{equation}
\sup_{0<s<t} s^{\frac{1}{2}-\frac{n}{2r}}\|v(s)\|_{r}
\leq 2K_0(t)\to 0 \quad \text{as }t \to 0.
\end{equation}
Define $U(t)=v(t)-w(t)$ and
$D(\tau,t)=\sup\limits_{\tau<s<t}\|U(s)\|_{n,\infty}$,
then
\begin{equation*}
U(t)=G[U,v](t)+G[w,U](t),\qquad 0<t<T.
\end{equation*} 
For every $\phi \in C_{0,\sigma}^\infty(\R^n)$, we have
\begin{equation*}
\begin{split}
\bigl|\bigl(G[U,v](t),\phi\bigr)\bigr|
&\leq
\int_0^t \bigl|\bigl(U(s)\cdot\nabla e^{-(t-s)\Delta}\phi,v(s)
\bigr)\bigr|\,ds
\\
&\leq
C\int_0^t \|U(s)\|_{n,\infty}\|v(s)\|_r
(t-s)^{-\frac{n}{2r}-\frac{1}{2}}
\|\phi\|_{\frac{n}{n-1},1}\,ds
\\
&\leq
C_7\sup_{0<s<t}s^{\frac{1}{2}-\frac{n}{2r}}\|v(s)\|_r\,D(0,t)
\|\phi\|_{\frac{n}{n-1},1}.
\end{split}
\end{equation*}
Therefore one obtains 
\begin{equation*}\|G[U,v](t)\|_{n,\infty}\leq
2C_7 K_0(t)D(0,t).
\end{equation*} 
Similarly,
\begin{equation*}\|G[w,U](t)\|_{n,\infty}
\leq C_7 \sup\limits_{0<s<t}s^{\frac{1}{2}-\frac{n}{2r}}\|w(s)\|_rD(0,t).
\end{equation*}
Now, let us define the constant 
$\kappa=\kappa(n,r)$ of \eqref{eq;uniqueness} as
\begin{equation*}
\kappa=\frac{1}{4C_7}.
\end{equation*}
Then by the assumption there exists $0<t_0\leq T$ such that
\begin{equation*}
2C_7K_0(t_0)+C_7 \sup\limits_{0<s<t_0}s^{\frac{1}{2}-\frac{n}{2r}}\|w(s)\|_r
\leq \frac{1}{2}.
\end{equation*}
Therefore one obtains
\begin{equation*}
D(0,t_0) \leq \bigl(2C_7K_0(t_0)+C_7 \sup\limits_{0<s<t_0}s^{\frac{1}{2}-\frac{n}{2r}}\|w(s)\|_r\bigr)D(0,t_0)
\leq \frac{1}{2}D(0,t_0).
\end{equation*}
Since $D(0,t)$ is a non-decreasing function in $t$, we conclude that
$U(t)\equiv 0$ for $0<t\leq t_0.$

Next we assume $U\equiv 0$ on $(0,\tau]$ for some $0<\tau< T$.
For every $\phi \in C_{0,\sigma}^\infty(\R^n)$,
\begin{equation*}
\begin{split}
|(U(t),\phi)|
&\leq
\bigl|\bigl(G[U,v](t),\phi\bigr)\bigr|
+
\bigl|\bigl(G[w,U](t),\phi\bigr)\bigr|
\\
&\leq
\int_\tau^t \bigl|\bigl(U(s)\cdot\nabla e^{-(t-s)\Delta}\phi,v(s)
\bigr)\bigr|\,ds
+
\int_\tau^t \bigl|\bigl(w(s)\cdot\nabla e^{-(t-s)\Delta}\phi,U(s)
\bigr)\bigr|\,ds
\\
&\leq
C\int_\tau^t 
\bigl(\|v(s)\|_r+\|w(s)\|_r\bigr)
\|U(s)\|_{n,\infty}
(t-s)^{-\frac{n}{2r}-\frac{1}{2}}
\|\phi\|_{\frac{n}{n-1},1}\,ds
\\
&\leq
C\Bigl(
\sup_{t_0<s<T}\|v(s)\|_r+\sup_{t_0<s<T}\|w(s)\|_r
\Bigr)
D(\tau,t)
(t-\tau)^{\frac{1}{2}-\frac{n}{2r}}
\|\phi\|_{\frac{n}{n-1},1}.
\end{split}
\end{equation*}
Hence by the duality, it holds
\begin{equation}
D(\tau,t)\leq C_9\Bigl(
\sup_{t_0<s<T}\|v(s)\|_r+\sup_{t_0<s<T}\|w(s)\|_r\Bigr)D(\tau,t)
(t-\tau)^{\frac{r-n}{2r}},
\qquad \tau<t<T.
\end{equation}
Here, we put $\tau_0>0$ as
\begin{equation*}
\tau_0=\min\left\{T,\,\left(
\frac{1}{2C_9\bigl(\sup\limits_{t_0<s<T}\|v(s)\|_r+\sup\limits_{t_0<s<T}\|w(s)\|_r\bigr)}\right)^{\frac{2r}{r-n}}\right\}.
\end{equation*}
Then one sees that
\begin{equation*}
D(\tau,\tau+\tau_0) \leq \frac{1}{2}D(\tau,\tau+\tau_0).
\end{equation*}
This implies that $U\equiv 0$ on $(0,\tau+\tau_0)$.
Consequently, $U\equiv 0$ on $(0,T)$. This completes the proof of 
Theorem \ref{thm;local uniqueness}.
\end{proof}

\section{Completion of 
the proof of Theorem \ref{thm;global}}
\label{sec;p2.1}

In this section, we prove (iv) and (vii) 
of Theorem \ref{thm;global} 
with the aid of Theorem \ref{thm;local}, 
Theorem \ref{thm;regularity} 
and Theorem \ref{thm;local uniqueness}.

Recall that the mild solution $u$ of (N-S) 
obtained by (iii) and (vi) 
in Theorem \ref{thm;global}.

Since $a \in L^{n,\infty}_{\sigma}(\R^n)\cap
L^r(\R^n)$ 
with $\frac{1}{r}=\frac{1}{p}-\frac{2}{n}$ for some $\frac{n}{3}<p<\frac{n}{2}$, we are able to
construct 
a local in time mild soltion $v$ of (N-S)
on $(0,T_*)$ by Theorem \ref{thm;local} and 
Remark \ref{rem;local}.
Since $\PP f$ is H\"{o}lder continuous on $[0,\infty)$ in $L^{n,\infty}_{\sigma}(\R^n)$
and satisfies the condition (A),
by Theorem \ref{thm;regularity} we see that
 $v$ is a 
strong solution of (N-S).
Since $u \in BC\bigl((0,\infty)\,;\,
L^r(\R^n)\bigr)$, 
Theorem \ref{thm;local uniqueness} 
guarantees $u\equiv v$ on $(0,T_*)$.

Next, put
\begin{equation*}
K^*:= \sup_{T_*/2<t<\infty}\|u(t)\|_r <\infty.
\end{equation*}
Then for $T_*/2<t_0<\infty$, we construct a local
mild solution $v$ of (N-S) 
with $v|_{t=0}=u(t_0)$ on $(t_0,t_0+\tau_0)$, where
\begin{equation*}
\tau_0=\min\left\{
1, \Bigl(\frac{\eta}{K^*
+\sup\limits_{0<t<\infty}
\|\PP f(t)\|_{n,\infty}}\Bigr)^{\frac{2r}{r-n}}
\right\}.
\end{equation*}
Sine $v$ is actually a strong solution of (N-S) and since $\tau_0$ is independent of $t_0$,
the uniqueness theorem yields that
$u$ is a strong solution of (N-S) on $(0,\infty)$.
This completes all of 
the proof of Theorem \ref{thm;global}.

\section{Completion of the proof of Theorem \ref{thm;localX}}\label{sec;p2.3}

We prove (iii) of Theorem \ref{thm;localX}.
We assume $a\in X_{\sigma}^{n,\infty}$ and $f\in BC\bigl([0,\infty)\,;\,
\widetilde{L}^{\frac{n}{3},\infty}(\R^n)\bigr)$
for $n\geq 4$ and $f\in BC\bigl([0,\infty)\,;\,
L^1(\R^n)\bigr)$ 
and assume $\PP f$ is H\"{o}lder
continous on $[0,\infty)$ in 
$L^{n,\infty}(\R^n)$ with $\PP f(t)\in X_{\sigma}^{n,\infty}$ for $t\geq 0$.
Let $u\in BC\bigl([0,T)\;\,
X_{\sigma}^{n,\infty}\bigr)$ be a weak mild solution of (N-S) obtained by (i) or (ii). 
On the other hand, by Theorem \ref{thm;local}
and Theorem \ref{thm;regularity} for $a$ and $f$
we have a strong solution
$w\in BC\bigl([0,\tau)\,;\,
L_{\sigma}^{n,\infty}(\R^n)\bigr)$ of (N-S) with some $\tau>0$ which is actually a weak mild solution of (N-S).
Then the argument in subsection \ref{subsec;UWMS}
is applicable to $u$ and $w$. Hence, we see that $u\equiv w$ on $[0,T)$.
This completes the proof. 

\section{Proof of Theorem \ref{thm;brezis}}
Suppose $a \in L_{\sigma}^{n,\infty}(\R^n)$
and $f\in BC\bigl([0,T)\,;\,
\widetilde{L}^{\frac{n}{3},\infty}(\R^n)\bigr)$,
$n\geq 4$ and 
$f \in BC\bigl( [0,T)\,;\,L^1(\R^3)\bigr)$.
Let $v \in BC\bigl([0,T)\,;\,
\widetilde{L}_{\sigma}^{n,\infty}(\R^n)\bigr)$ 
be a weak mild solution of (N-S).
By Lemma \ref{lem;tildeL} we see that
$F(t)\to0$  and $G[v,v](t)\to 0$ 
in $L^{n,\infty}(\R^n)$ as $t\to 0$, where 
$F(t)=\int_0^t \PP e^{(t-s)\Delta}f(s)\,ds$ and
$G[u,v](t)=-\int_0^t \nabla\cdot e^{(t-s)\Delta}\PP [u\otimes v](s)\,ds$. Then we see that 
\begin{equation*}
e^{t\Delta}a -a=
u(t)-a -F(t)-
G[u,u](t)\to 0 \quad
\text{in }L_{\sigma}^{n,\infty}(\R^n)\qquad\text{as }t\to0.
\end{equation*}
Hence, we conclude $a \in X_{\sigma}^{n,\infty}$.
Therefore Theorem \ref{thm;local weak uniqueness}
implies the proof of (i) 
of Theorem \ref{thm;brezis}.

Next we consider 
$a \in \widetilde{L}_{\sigma}^{n,\infty}(\R^n)$
and $u\in BC\bigl([0,T)\,;\,
L_{\sigma}^{n,\infty}(\R^n)\bigr)$.
Then by Lemma \ref{lem;tildeL} we have
\begin{equation*}
G[u,u](t)=G[u-a,u](t)+G[a,u](t)\to 0
\quad \text{in }L^{n,\infty}(\R^n)
\quad \text{as } t\to 0.
\end{equation*}
Hence by the same argument as above, 
we obtain $a \in X_\sigma^{n,\infty}$.
This completes the proof of (ii) of 
Theorem \ref{thm;brezis}.

\appendix
\section{Uniqueness criterion}
In this section, we show 
$\lim\limits_{t\to0} t^{\frac{1}{2}-\frac{n}{2r}}\|u(t)\|_{r}=0$ for a (weak) mild solution
\begin{equation}\label{eq;class Brezis}
u \in BC\bigl([0,T)\,;\, 
L_{\sigma}^{n,\infty}(\R^n)\bigr)
\cap
C\bigl((0,T)\,;\, L^r(\R^n)\bigr)
\quad\text{for some }r>n,
\end{equation}
by the argument introduced by Brezis \cite{Brezis}. 
\begin{lemma}\label{lem;Brezis}
Let $a\in L_{\sigma}^{n,\infty}(\R^n)$
and $u$ be a (weak) mild solution on
$(0,T)$ in the class
\eqref{eq;class Brezis} for some $r>n$.
Suppose $\Gamma_u=\{u(t)\in 
L_{\sigma}^{n,\infty}(\R^n)
\cap L^r(\R^n)\,;\,t\in(0,T^\prime]\}$ for some $0<T^\prime<T$.
Then there exists a monotonously nondecreasing  function $\delta(t;\Gamma_u)$ with $\delta(0;\Gamma_u)=0$ on $[0,T^\prime]$ such that
\begin{equation*}
t^{\frac{1}{2}-\frac{n}{2r}}
\|e^{t \Delta} f\|_r 
\leq \delta (t;\Gamma_u)
\qquad\text{for all } f \in \overline{\Gamma_u}^{\|\cdot\|_{n,\infty}}. 
\end{equation*}
\end{lemma}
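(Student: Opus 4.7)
The plan is a Brezis-style compactness argument. Define
\[
\delta(t;\Gamma_u) := \sup_{0<s\leq t}\sup_{f\in\overline{\Gamma_u}^{\|\cdot\|_{n,\infty}}} s^{\frac{1}{2}-\frac{n}{2r}}\|e^{s\Delta}f\|_r, \qquad t>0,
\]
and $\delta(0;\Gamma_u):=0$. This quantity is manifestly nondecreasing in $t$, so everything reduces to proving $\delta(t;\Gamma_u)\to 0$ as $t\searrow 0$. Two ingredients will be needed: (a) compactness of $\overline{\Gamma_u}^{\|\cdot\|_{n,\infty}}$ inside $L^{n,\infty}(\R^n)$, and (b) the strong-type smoothing estimate $\|e^{t\Delta}g\|_r \leq C t^{-\frac{1}{2}+\frac{n}{2r}}\|g\|_{n,\infty}$ valid for $g\in L^{n,\infty}(\R^n)$ and $r>n$.

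For (a), since $u\in BC([0,T)\,;\,L_\sigma^{n,\infty}(\R^n))$, the restriction $u:[0,T']\to L^{n,\infty}(\R^n)$ is continuous on a compact interval, so its image $u([0,T'])$ is compact in $L^{n,\infty}$. Continuity also yields $u(s)\to u(0)=a$ in $L^{n,\infty}$ as $s\searrow 0$, hence $\overline{\Gamma_u}^{\|\cdot\|_{n,\infty}} = u([0,T'])$. For (b), I would apply Young--O'Neil convolution in Lorentz spaces to the heat kernel $G_t$ with $1/p = 1+1/r-1/n$ (so $1<p<n<r$) and second index $s_1=1$: the bound $\|G_t\|_{p,1}\leq C t^{-\frac{n}{2}(1-1/p)} = C t^{-\frac{1}{2}+\frac{n}{2r}}$ is elementary, and Young--O'Neil then gives $\|G_t*g\|_r \leq C\|G_t\|_{p,1}\|g\|_{n,\infty}$.

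Armed with (a) and (b), fix $\eta>0$ and use compactness to pick a finite set $u(s_1),\dots,u(s_N)\in\Gamma_u\subset L^{n,\infty}(\R^n)\cap L^r(\R^n)$ such that every $f\in\overline{\Gamma_u}^{\|\cdot\|_{n,\infty}}$ admits some index $i$ with $\|f-u(s_i)\|_{n,\infty}<\eta$. Splitting $f = u(s_i) + (f-u(s_i))$ and invoking $L^r$-contractivity of $e^{t\Delta}$ on the first summand together with (b) on the second,
\[
t^{\frac{1}{2}-\frac{n}{2r}}\|e^{t\Delta}f\|_r \leq t^{\frac{1}{2}-\frac{n}{2r}}\|u(s_i)\|_r + C\eta.
\]
Because $r>n$ the exponent $\frac{1}{2}-\frac{n}{2r}$ is strictly positive, so for each of the finitely many $s_i$ the first term tends to $0$ as $t\searrow 0$, uniformly in $i$. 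Hence $\limsup_{t\searrow 0}\delta(t;\Gamma_u)\leq C\eta$, and arbitrariness of $\eta$ gives $\delta(t;\Gamma_u)\to 0$.

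The main technical obstacle is the smoothing bound (b): Proposition \ref{prop;LpLq} supplies only the weak-type variant $\|e^{t\Delta}g\|_{r,\infty}\leq C t^{-\frac{1}{2}+\frac{n}{2r}}\|g\|_{n,\infty}$, which is insufficient since the lemma demands the strong $L^r$-norm. The Lorentz--Young detour above patches the gap, but one must be careful to use the endpoint second index $s_1=1$ so that the kernel is measured in $\|G_t\|_{p,1}$ (where the explicit time scaling is the classical one) rather than in a weaker Lorentz norm. Once (b) is in hand, the approximation step is standard and no smallness hypothesis is used.
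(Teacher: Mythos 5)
Your proof is correct and follows essentially the same Brezis-type compactness argument as the paper: precompactness of the orbit $\Gamma_u$ in $L^{n,\infty}(\R^n)$ combined with the smoothing estimate $\|e^{t\Delta}g\|_r\le Ct^{-\frac{1}{2}+\frac{n}{2r}}\|g\|_{n,\infty}$, your finite $\eta$-net playing the role of the paper's ``pointwise limit in $t$ plus continuity in $f$ plus maximum over the compact set'' formulation. A useful addition is that you actually justify the strong-type $L^{n,\infty}\to L^r$ bound via Young--O'Neil with the heat kernel measured in $L^{p,1}$, a step the paper's proof relies on implicitly (both for the continuity of $\delta(t,\cdot)$ on $\overline{\Gamma_u}^{\|\cdot\|_{n,\infty}}$ and for limit points such as $a$ that need not belong to $L^r$) but does not spell out.
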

\begin{proof}
Observe that $\Gamma_u$ is precompact in $L^{n,\infty}(\R^n)$ since $u$ is continuous on $[0,T^\prime]$ in $L^{n,\infty}_{\sigma}(\R^n)$.
Then $
\delta(t;f)=\sup\limits_{0<s\leq t}s^{\frac{1}{2}-\frac{n}{2r}}\|e^{s\Delta}f\|_{r}$ is continuous and monotonously nondecreasing function on $[0,T^\prime]$ 
with 
$\lim\limits_{t\to0}\delta(t,f)=0,
$
for every fixed $f \in 
\overline{\Gamma_u}^{\|\cdot\|_{n,\infty}}$.
On the other hand, for every fixed 
$0<t\leq T^\prime$, $\delta(t,f)$ is continuous on $\overline{\Gamma_u}^{\|\cdot\|_{n,\infty}}$. So 
\begin{equation*}
\delta(t;\Gamma_u):= \max_{f\in \overline{\Gamma_u}} \delta(t,f)
\end{equation*}
is a desired function. This completes the proof.
\end{proof}

Next, we shall prove $\lim\limits_{t\to0} t^{\frac{1}{2}-\frac{n}{2r}}\|u(t)\|_{r}=0$ with above lemma.
For a while we fix $0<t_0<T^\prime$.
Note that
\begin{equation*}
u(t+t_0)=
e^{t\Delta}u(t_0)+
\int_{0}^t e^{(t-s)\Delta}\PP f(s+t_0)\,ds
-\int_0^t e^{(t-s)\Delta}
\PP[u\cdot\nabla u](s+t_0)\,ds,
\end{equation*}
for all $0<t<T^\prime-t_0$.
Put $\gamma(t):=\sup\limits_{0<s\leq t}s^{\frac{1}{2}-\frac{n}{2r}}\|u(s+t_0)\|_r$. Then we see that
\begin{equation*}
\gamma(t)\leq \delta(t;\Gamma_u)+
Ct \sup_{0<s<T^\prime}\|\PP f(s)\|_{n,\infty}
+C_9 \gamma(t)^2.
\end{equation*}
Now we choose $t^*>0$ so that
\begin{equation*}
\delta(t^*;\Gamma_u)+
Ct^{*}\sup_{0<s<T^\prime}\|\PP f(s)\|_{n,\infty}
<\frac{1}{4C_{10}}.
\end{equation*}
On the other hand, 
since $\gamma(t)$ is continuous 
and $\lim\limits_{t\to 0}\gamma(t)=0$,we put 
$t_*:=\sup\{t\,;\, \gamma(t)<1/C_{10} \text{ on } [0,t]\}$. 
It holds that
\begin{equation*}
\gamma(t)\leq 2\bigl(
\delta(t;\Gamma_u)+
Ct\sup_{0<s<T^\prime}\|\PP f(s)\|_{n,\infty}
\bigr)
\leq \frac{1}{2C_{10}}
\qquad\text{for  } t<\min\{t_*,t^*\}.
\end{equation*}
Hence this yields that $t^*\leq t_*$ and
\begin{equation*}
\|u(t+t_0)\|_{r} \leq 
\bigl(\delta(t;\Gamma_u)+
Ct\sup_{0<s<T^\prime}\|\PP f(s)\|_{n,\infty}
\bigr)t^{-\frac{1}{2}+\frac{n}{2r}},
\qquad t\in (0,t^*).
\end{equation*}
Since $t^{*}$ is independent of $t_0$, letting $t_0\to 0$ we obtain that
\begin{equation*}
\|u(t)\|_{r} \leq 
\bigl(\delta(t;\Gamma_u)+
Ct\sup_{0<s<T^\prime}\|\PP f(s)\|_{n,\infty}
\bigr)t^{-\frac{1}{2}+\frac{n}{2r}},
\qquad t\in (0,t^*).
\end{equation*} 
Therefore, $\lim\limits_{t\to0}t^{\frac{1}{2}-\frac{n}{2r}}\|u(t)\|_r=0$.
This completes the proof.

\medskip
\noindent\textbf{Acknoledgement}
The authors would like to thank Professor Yasushi Taniuchi for fruitful comments.
They also would like to appreciate Dr. Naoto Kajiwara for his comment.
The work of the first author is partly supported by JSPS Grand-in-Aid for Young 
Scientists (B) 17K14215.
The work of the second author is partly supported by JSPS through Grand-in-Aid for Young Scientists (B) 15K20919.

\end{document}